\documentclass[12pt,reqno]{amsart}

\usepackage[shortlabels]{enumitem}

\usepackage{amssymb}
\usepackage{bm}  
\usepackage[dvipsnames]{xcolor}
\usepackage%
[colorlinks=true,linkcolor=Maroon,citecolor=OliveGreen,pagebackref]
{hyperref}
\usepackage[capitalize]{cleveref}
\usepackage{url}
\usepackage[abbrev,shortalphabetic]{amsrefs}  
\usepackage{tikz}

\usepackage[left=20mm,top=20mm,right=20mm,bottom=20mm]{geometry}

\usepackage{subcaption}

\usepackage[norefs, nocites]{refcheck}

\makeatletter
\newcommand{\refcheckize}[1]{%
  \expandafter\let\csname @@\string#1\endcsname#1%
  \expandafter\DeclareRobustCommand\csname relax\string#1\endcsname[1]{%
    \csname @@\string#1\endcsname{##1}\wrtusdrf{##1}}%
  \expandafter\let\expandafter#1\csname relax\string#1\endcsname
}
\makeatother
\refcheckize{\cref}
\refcheckize{\Cref}

\newtheorem{theorem}{Theorem}[section]
\newtheorem{lemma}[theorem]{Lemma}
\newtheorem{proposition}[theorem]{Proposition}
\newtheorem{corollary}[theorem]{Corollary}

\theoremstyle{definition}
\newtheorem{remark}[theorem]{Remark}

\newtheorem{definition}[theorem]{Definition}

\numberwithin{equation}{section}

\usepackage{amssymb}
\renewcommand{\leq}{\leqslant}
\renewcommand{\le}{\leq}
\renewcommand{\geq}{\geqslant}
\renewcommand{\ge}{\geq}
\newcommand{\eps}{\epsilon}

\makeatletter
\newsavebox{\@brx}
\newcommand{\llangle}[1][]{\savebox{\@brx}{\(\m@th{#1\langle}\)}%
  \mathopen{\copy\@brx\kern-0.5\wd\@brx\usebox{\@brx}}}
\newcommand{\rrangle}[1][]{\savebox{\@brx}{\(\m@th{#1\rangle}\)}%
  \mathclose{\copy\@brx\kern-0.5\wd\@brx\usebox{\@brx}}}
\makeatother

\newcommand\br[1]{{\left(#1\right)}}
\newcommand\floor[1]{\left\lfloor{#1}\right\rfloor}
\newcommand\pfrac[2]{\br{\frac{#1}{#2}}}

\renewcommand{\subset}{\subseteq}
\renewcommand{\supset}{\supseteq}

\newcommand\opr[1]{\operatorname{#1}}

\def\CC{\mathbf{C}}
\def\ZZ{\mathbf{Z}}
\def\FF{\mathbf{F}}

\def\sL{\mathsf{L}}
\def\sH{\mathsf{H}}
\def\sB{\mathsf{B}}
\def\sF{\mathsf{F}}
\def\sQ{\mathsf{Q}}

\def\cG{\mathcal{G}}

\def\cU{\mathcal{U}}

\def\PP{\mathbf{P}}
\def\EE{\mathbf{E}}

\def\supp{\opr{supp}}

\def\tr{\opr{tr}}

\def\im{\opr{im}}

\def\rank{\opr{rank}}
\def\crank{\opr{crank}}
\def\trank{\opr{trank}}
\def\lrank{\opr{lrank}}
\def\cx{\opr{cx}}
\def\psystem{\mathfrak{P}} 

\def\Adj{\mathcal{A}}
\def\Irr{\opr{Irr}}

\usepackage{todonotes}

\begin{document}

\title{Transversals in quasirandom latin squares}

\author{Sean Eberhard}
\address{Sean Eberhard, Mathematical Sciences Research Centre, Queen's University Belfast, Belfast BT7~1NN, UK}
\email{s.eberhard@qub.ac.uk}

\author{Freddie Manners}
\address{Freddie Manners, UCSD Department of Mathematics, 9500 Gilman Drive \#0112, La Jolla CA 92093, USA}
\email{fmanners@ucsd.edu}

\author{Rudi Mrazovi\'c}
\address{Rudi Mrazovi\'c, University of Zagreb, Faculty of Science, Department of Mathematics, Zagreb, Croatia}
\email{Rudi.Mrazovic@math.hr}

\thanks{SE has received funding from the European Research Council (ERC) under the European Union’s Horizon 2020 research and innovation programme (grant agreement No. 803711) and from the Royal Society. RM is supported in part by the Croatian Science Foundation under the project UIP-2017-05-4129 (MUNHANAP). FM is supported by a Sloan Fellowship.} 

\begin{abstract}
    A transversal in an $n \times n$ latin square is a collection of $n$ entries not repeating any row, column, or symbol.
    Kwan showed that almost every $n \times n$ latin square has $\bigl((1 + o(1)) n / e^2\bigr)^n$ transversals as $n \to \infty$.
    Using a loose variant of the circle method we sharpen this to $(e^{-1/2} + o(1)) n!^2 / n^n$.
    Our method works for all latin squares satisfying a certain quasirandomness condition,
    which includes both random latin squares with high probability as well as multiplication tables of quasirandom groups.
\end{abstract}

\maketitle


\section{Introduction}

A transversal in an $n \times n$ latin square is a set of $n$ entries such that no two of them come from the same row or column or contain the same symbol.

Although there are examples of latin squares with no transversals (e.g., the multiplication table of $\ZZ/n\ZZ$ for $n$ even), it is widely believed that these are rare.
For example, a famous conjecture of Ryser claims that an $n \times n$ latin square contains a transversal whenever $n$ is odd.
In the same direction, Kwan~\cite{kwan} proved that a uniformly random $n \times n$ latin square has a transversal with probability $1-o(1)$.
Moreover, he showed that, with probability $1-o(1)$, the number of transversals is $\bigl((1-o(1))n/e^2\bigr)^n$.

In this paper we improve Kwan's result by finding the precise asymptotic of the number of transversals in a uniformly random latin square.
\begin{theorem}%
    \label{thm:main-random}
    Let $\sL$ be a uniformly random $n \times n$ latin square.
    Then $\sL$ has $\bigl(e^{-1/2}+o(1)\bigr) n!^2 / n^n$ transversals with probability $1 - o(1)$ as $n \to \infty$.
\end{theorem}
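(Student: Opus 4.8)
The plan is to deduce \Cref{thm:main-random} from two essentially independent statements: \textbf{(A)} a deterministic estimate, to the effect that any $n\times n$ latin square satisfying a suitable quasirandomness hypothesis (a bound on the number of certain ``coincidence'' configurations among its rows, to be formulated precisely) has $\br{e^{-1/2}+o(1)}n!^2/n^n$ transversals; and \textbf{(B)} the statement that a uniformly random latin square satisfies that hypothesis with probability $1-o(1)$. Essentially all of the analytic work lies in \textbf{(A)}, which is where the (loose) circle method enters, so I would concentrate there first and treat \textbf{(B)} by reduction to known structural facts about random latin squares.

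For \textbf{(A)}, write $L(i,j)$ for the symbol of $\sL$ in cell $(i,j)$ and $T(\sL)$ for the number of transversals, and start from the exact identity
\[
  T(\sL)=\frac{1}{(2\pi)^n}\int_{[0,2\pi]^n} e^{-i(\theta_1+\dots+\theta_n)}\,\opr{perm}\bigl(e^{i\theta_{L(i,j)}}\bigr)_{1\le i,j\le n}\,d\theta,
\]
which holds because expanding the permanent as $\sum_{\sigma\in S_n}\prod_i e^{i\theta_{L(i,\sigma(i))}}$ and integrating retains exactly those $\sigma$ for which every symbol occurs exactly once among $L(1,\sigma(1)),\dots,L(n,\sigma(n))$. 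I would then split the integral into a major arc — a neighbourhood of the diagonal line $\theta_1=\dots=\theta_n$, noting that the integrand is invariant under the diagonal shift $\theta\mapsto\theta+c\mathbf 1$ so that one really integrates over the quotient — and the complementary minor arc.

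On the major arc one writes $\opr{perm}\bigl(e^{i\theta_{L(i,j)}}\bigr)=n!\,\EE_{\sigma}\bigl[e^{\,i\sum_i\theta_{L(i,\sigma(i))}}\bigr]$ and observes that $\sum_i\theta_{L(i,\sigma(i))}=\sum_k\theta_k c_k(\sigma)$, where $c_k(\sigma)=\#\{i:L(i,\sigma(i))=k\}$ is the number of fixed points of a permutation determined by $\sigma$ and $L$. The first two moments of the vector $(c_k(\sigma))_k$ over a uniformly random $\sigma$ are forced by the latin-square axioms — in particular $\EE c_k=1$, $\opr{Var}c_k=1$, and $\opr{Cov}(c_k,c_l)=-1/(n-1)$ for $k\neq l$, \emph{independently of $\sL$} — so a local central limit / Laplace estimate reduces the major-arc contribution to an explicit Gaussian integral, whose value is $\br{e^{-1/2}+o(1)}n!^2/n^n$. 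The constant $e^{-1/2}$ records the discrepancy between treating the $c_k$ as independent unit Poissons and respecting both the constraint $\sum_k c_k=n$ and the $-1/(n-1)$ correlations; it does not see the quasirandomness.

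The minor arc is the main obstacle. One must show that $\bigl|\opr{perm}\bigl(e^{i\theta_{L(i,j)}}\bigr)\bigr|$ exhibits enough cancellation to contribute negligibly once $\theta$ is bounded away from the diagonal, strongly enough to beat the $o(1)$ budget uniformly — which is delicate both close to the diagonal (where the permanent is only mildly smaller than $n!$, but the available volume is correspondingly small) and far from it. This is exactly what the quasirandomness hypothesis is tailored for: a moment bound such as $\frac{1}{(2\pi)^n}\int|\opr{perm}|^{2}$ (or a higher moment $\int|\opr{perm}|^{2m}$) expands into a sum over pairs, or tuples, of permutations weighted by indicators that translate into counts of prescribed coincidence configurations among the rows of $\sL$, and quasirandomness says these counts are close to their ``random'' predictions — enough to win, without needing optimal constants (hence ``loose''). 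Finally, for \textbf{(B)} one verifies that a uniform random latin square satisfies the required coincidence-count estimates with probability $1-o(1)$, via the coupling of random latin squares with random greedy / absorber processes and the concentration of substructure counts (Kwan~\cite{kwan}, and the later work of Kwan--Sah--Sawhney--Simkin); combining this with \textbf{(A)} proves \Cref{thm:main-random}.
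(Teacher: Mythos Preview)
Your high-level decomposition into \textbf{(A)} a deterministic estimate under a quasirandomness hypothesis and \textbf{(B)} verification that random latin squares are quasirandom with high probability is exactly the architecture of the paper (Theorems~\ref{thm:main-quasirandom} and~\ref{thm:random-implies-quasirandom}), and your appeal to \cite{KSSS} for \textbf{(B)} is on target. However, your implementation of \textbf{(A)} diverges substantially from the paper and contains a genuine gap in the minor-arc treatment.

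The paper does not work with the permanent integral over $[0,2\pi]^n$. Instead it expresses the transversal count via the trilinear form $\Lambda(1_S,1_S,1_S)$ and expands $1_S=\sum_{A\subset[n]}P_A 1_S$ where $P_A$ are projections onto functions depending on exactly the coordinates in $A$; crucially, $\Lambda$ is \emph{diagonal} in this decomposition, so $\Lambda(1_S,1_S,1_S)=\sum_A\Lambda(P_A1_S,P_A1_S,P_A1_S)$. The ``arcs'' are then ranges of $|A|$, not regions of the torus.

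The gap is that you treat the minor arcs as a single regime to be handled by quasirandomness via moment expansions of $|\opr{perm}|^{2m}$. In the paper there are \emph{three} regimes, and the middle one --- the sparse minor arcs, $|A|$ between roughly $(n/\log n)^{1/2}$ and $\eps n$ --- is handled by a purely combinatorial argument (\Cref{thm:sparse-minor}) valid for \emph{every} latin square, with no quasirandomness input at all. Quasirandomness (the spectral gap of the operator $\Adj$ in \Cref{def:quasirandom}) enters only for the dense minor arcs $|A|\ge\eps n$, where it yields a gain of $\rho^{|A|/2}$; this gain is useless when $|A|$ is sub-linear. Your sketch gives no mechanism for the intermediate regime, and the vague appeal to moment bounds on the permanent does not supply one: an $L^2$ or $L^{2m}$ bound on the full integral does not by itself localize enough cancellation to the region near (but not at) the diagonal. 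The paper's sparse-minor-arc argument is in fact the most technically involved part (Section~\ref{sec:true-sparse-minor}), requiring a pointwise majorant for $|P_A 1_S|$, a splitting $R=R^\sharp+R^\flat$, exact combinatorial-rank computations for matching-type partition triples (\Cref{claim:cool-crank-value}), and generating-function estimates.

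A secondary point: the paper's quasirandomness hypothesis is spectral (operator norm of $\Adj-\cU$), not a direct configuration-count bound; configuration counts enter only when \emph{verifying} the spectral condition for random $\sL$ via $\tr\Adj^6$.
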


More generally, we find a (deterministic) \emph{quasirandomness condition} for latin squares which is sufficient to guarantee this same asymptotic number of transversals.
\begin{theorem}%
    \label{thm:main-quasirandom}
    There is a constant $\rho > 0$ such that  the following holds.
    Let $\sL$ be an $n \times n$ latin square which is $\Adj$-quasirandom with parameter $\rho$.
    Then $\sL$ has $\bigl(e^{-1/2}+o(1)\bigr) n!^2 / n^n$ transversals.
\end{theorem}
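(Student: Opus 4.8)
The plan is to carry out a circle-method argument for the probability that a uniformly random $\pi\in S_n$ is a transversal, and then multiply by $n!$. Write $k_s(\pi)=\#\{r:\sL(r,\pi(r))=s\}$ for the multiplicity of symbol $s$ along the diagonal of $\sL$ selected by $\pi$; a transversal is precisely a $\pi$ with $k_s(\pi)=1$ for every $s$ (equivalently, since $\sum_s k_s(\pi)=n$, one with no symbol repeated), so
\[
  T=n!\,\PP_\pi\bigl[\,\forall s:\ k_s(\pi)=1\,\bigr].
\]
The profile $(k_s(\pi))_s$ takes values in the integer points of the hyperplane $\{\sum_s x_s=n\}$, so Fourier inversion on $(\RR/2\pi\ZZ)^n$ gives
\[
  \PP_\pi\bigl[\,\forall s:\ k_s=1\,\bigr]
  =\frac{1}{(2\pi)^n}\int_{(\RR/2\pi\ZZ)^n}\Phi(\theta)\,e^{-i\sum_s\theta_s}\,d\theta,
  \qquad
  \Phi(\theta)=\frac{1}{n!}\,\opr{per}\bigl((e^{i\theta_{\sL(r,c)}})_{r,c}\bigr),
\]
the permanent arising because each row of $(e^{i\theta_{\sL(r,c)}})_{r,c}$ is a reordering of the fixed vector $(e^{i\theta_1},\dots,e^{i\theta_n})$. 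Everything is thereby reduced to the asymptotics of these character-twisted permanents and the integration.

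The main term comes from a major box around $\theta=0$, and the cleanest way to see its value is to compare $\Phi$ with the characteristic function of the ``multinomial model'', in which the profile is replaced by an assignment of $n$ balls to $n$ boxes uniformly at random (equivalently, $n$ independent Poisson$(1)$'s conditioned on their sum): in that model the corresponding probability is exactly $n!/n^n$, so the model by itself predicts $T\sim n!^2/n^n$ and recovers Kwan's order of magnitude. The discrepancy is that the true profile has covariance matrix $\tfrac{n}{n-1}I-\tfrac1{n-1}J$, which is exactly $\tfrac{n}{n-1}$ times the multinomial covariance $I-\tfrac1nJ$; equivalently, the expected number of symbol-collisions $\sum_s\binom{k_s(\pi)}{2}$ is $\tfrac n2$ rather than $\tfrac{n-1}{2}$. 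Running this one scalar factor through the Laplace/local-limit analysis on the hyperplane produces $(\tfrac{n-1}{n})^{(n-1)/2}\to e^{-1/2}$, which is the source of the constant; combined with Stirling this gives $\bigl(e^{-1/2}+o(1)\bigr)n!^2/n^n$ from the major box. Executing this rigorously is a ``mere'' but genuinely delicate computation: one expands $\log\Phi$ near $0$, checks that the higher joint cumulants of the profile match those of the model closely enough that their per-symbol errors do not accumulate over the $n$ coordinates, and controls the Laplace approximation on the lattice $\{x\in\ZZ^n:\sum_s x_s=n\}$. One must also rule out spurious major arcs at other ``rational'' points $\theta$; for a multiplication table of a quasirandom group this is automatic because the group is perfect (no nontrivial abelian quotient, hence no nontrivial one-dimensional character), and $\Adj$-quasirandomness is precisely what provides the analogue for general $\sL$.

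The minor arcs --- $\theta$ bounded away from $0$ --- are the heart of the argument and the place where the hypothesis is indispensable, and I expect the twisted-permanent bound to be the main obstacle. One needs $\bigl|\opr{per}\bigl((e^{i\theta_{\sL(r,c)}})_{r,c}\bigr)\bigr|$ to be much smaller than $n!$ there, with enough uniformity that the minor-arc integral is negligible beside the major-box term; the trivial bound $\opr{per}(|M|)=n!$ gains nothing and good upper bounds for permanents are hard to come by. This is exactly the role of $\Adj$-quasirandomness: I would estimate a suitable even power $\bigl|\opr{per}(\cdot)\bigr|^{2t}$ by expanding it as a sum over $2t$-tuples of permutations and re-organising, reducing the whole estimate to a bound that the hypothesis is designed to supply --- morally, that the relevant adjacency/averaging operator attached to $\sL$ has a spectral gap of the size controlled by $\rho$, large enough that the gain from mixing overcomes $n!$ across the entire minor range.

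Finally I would splice the two regimes, the only remaining wrinkle being the transition annulus in which $\theta$ is neither small enough for the Laplace expansion nor large enough for the crude quasirandom bound --- the standard nuisance of circle-method arguments, handled by an intermediate estimate (a weaker Gaussian bound for $\Phi$ together with a weaker permanent bound). Adding the major-box main term $\bigl(e^{-1/2}+o(1)\bigr)n!^2/n^n$ to the negligible minor-arc contribution yields \Cref{thm:main-quasirandom}.
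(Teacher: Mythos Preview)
Your framework differs from the paper's. You run a classical $n$-dimensional circle method on the symbol-count vector, reducing everything to bounds on the twisted permanents $\opr{per}\bigl((e^{i\theta_{\sL(r,c)}})_{r,c}\bigr)$. The paper instead expresses the count as $\Lambda(1_S,1_S,1_S)\,n^{2n}/n!$ for the trilinear form $\Lambda(f,g,h)=\EE_{(x,y,z)\in\sL^n}f(x)g(y)h(z)$, decomposes $1_S=\sum_{A\subset[n]}P_A 1_S$ via the coordinate projections $P_A$, and exploits that $\Lambda$ is block-diagonal in this decomposition; the ``arcs'' are ranges of $|A|$, not regions of a torus. Your covariance heuristic for the constant $e^{-1/2}$ is correct and pleasant, but the argument as written has real gaps.

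The central one is the minor-arc bound. You assert that expanding $|\opr{per}(\cdot)|^{2t}$ over $2t$-tuples of permutations will ``reduce to a bound that the hypothesis is designed to supply'', but you give no mechanism. What $\Adj$-quasirandomness actually delivers (\Cref{lem:quasi-use}) is a bound on $\Lambda(f,g,h)$ for $f,g,h$ lying in the tensor powers $L^2(X)_0^{\otimes m}$ etc.; a pointwise-in-$\theta$ permanent is not of this form, and the functions $z\mapsto e^{i\theta_z}$ are not mean-zero. (If you subtract off the mean and expand, you are led straight back to a decomposition indexed by subsets $A$, i.e., to the paper's setup.) More telling still: in the paper quasirandomness is invoked \emph{only} for the dense range $|A|\ge\eps n$ (\Cref{thm:dense-minor}). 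There is an entire intermediate regime $C(n/\log n)^{1/2}<|A|<\eps n$ (\Cref{thm:sparse-minor}) that requires a separate and fairly intricate argument valid for \emph{every} latin square --- pointwise majorants for $|P_A 1_S|$, a splitting $R=R^\sharp+R^\flat$, generating-function manipulations, and the exact combinatorial-rank computation for matching systems (\Cref{claim:cool-crank-value}). Your one line about a ``transition annulus \dots\ handled by an intermediate estimate'' is standing in for what is in fact the longest and most delicate stretch of the proof, and nothing in your sketch indicates how to obtain its analogue in the permanent framework.
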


The precise definition of ``$\Adj$-quasirandom'' is in terms of the spectral gap of some operator associated to $\sL$: see \Cref{def:quasirandom}.
Despite the language, it is not actually obvious that a uniformly random $n \times n$ is quasirandom with high probability as $n \to \infty$, and hence that \Cref{thm:main-quasirandom} implies \Cref{thm:main-random}.
Indeed, it is incredibly delicate to prove \emph{any} statistical properties of a uniform random latin square, for a number of reasons: the exact asymptotic count of $n \times n$ latin squares is not known; the latin square property is too rigid to make local changes; and no efficient way of sampling uniform random latin squares is known.

However, using a recent result of Kwan, Sah, Sawhney, and Simkin~\cite{KSSS} we are indeed able to establish that a random latin square is $\Adj$-quasirandom with parameter $o(1)$, with high probability,
and we can thus prove \Cref{thm:main-random} as a consequence of \Cref{thm:main-quasirandom}.

\begin{theorem}%
    \label{thm:random-implies-quasirandom}
    Let $\sL$ be a uniformly random $n \times n$ latin square.
    Then $\sL$ is $\Adj$-quasirandom with parameter $o(1)$, with probability $1 - o(1)$ as $n \to \infty$.
\end{theorem}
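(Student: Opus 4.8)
The plan is to unwind \Cref{def:quasirandom} and reduce the spectral bound we want to the behaviour of small combinatorial subconfigurations of $\sL$, which can then be controlled with high probability using the results of Kwan--Sah--Sawhney--Simkin~\cite{KSSS}. Write $\Adj = \Adj_\sL$ for the operator of \Cref{def:quasirandom} and $P$ for its trivial part, so that being $\Adj$-quasirandom with parameter $\rho$ amounts to the bound $\lVert \Adj_\sL - P \rVert \le \rho$ in operator norm; put $B = \Adj_\sL - P$. Operator norms of combinatorial objects are awkward to get at directly, so the natural device is to bound a high even Schatten norm: for an integer $k = k(n)$ to be chosen,
\[
  \lVert B \rVert^{2k} \;\le\; \lVert B \rVert_{2k}^{2k} \;=\; \tr\bigl((B B^*)^k\bigr).
\]
Letting $k \to \infty$ (rather than taking $k = 1$, the Hilbert--Schmidt case) is what prevents the usual loss of a power of the dimension when passing from operator norm to a trace; the price is that one must still estimate the right-hand side.

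The second step is a combinatorial expansion of $\tr\bigl((BB^*)^k\bigr)$ in the style of the trace method. Since $\Adj_\sL$ is assembled from the $n^2$ triples $(i,j,\sL(i,j))$ -- equivalently from the $n$ symbol-permutations of $\sL$ -- and $P$ is an explicit low-rank correction, $\tr\bigl((BB^*)^k\bigr)$ unwinds into a signed sum over closed walks of length $2k$ in an auxiliary digraph built from $\sL$, hence into a signed count of labelled families of $O(k)$ cells of $\sL$ subject to a prescribed pattern of row/column/symbol coincidences. For a ``random-like'' latin square the sum should be dominated by the degenerate patterns -- those in which the $O(k)$ cells collapse onto a much shorter closed substructure -- and, with the normalisation built into \Cref{def:quasirandom}, its total value should be $\rho(n)^{2k}$-small for some $\rho(n) \to 0$, with the non-degenerate patterns contributing only a lower-order term.

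The third step, which I expect to be the main obstacle, is to show that for a uniformly random $\sL$ these subconfiguration counts are, with probability $1 - o(1)$, within a $1 + o(1)$ factor of the values predicted by this random-like heuristic. This is precisely the kind of local statistic that \cite{KSSS} is built to control: their results pin down the distribution, and in particular the concentration, of the number of occurrences in a random latin square of a fixed small pattern -- intercalates and their short generalisations. The work is to organise the coincidence patterns coming out of the trace expansion into a form to which the \cite{KSSS} estimates apply (or can be reduced), and to choose $k = k(n)$ growing slowly enough that a union bound over the comparatively few pattern types still leaves total failure probability $o(1)$. (Depending on the exact normalisation in \Cref{def:quasirandom}, the case $k = 1$ -- controlling $\lVert B \rVert_{\mathrm{HS}}$, essentially a weighted count of intercalate-type configurations via the large-deviation bound of \cite{KSSS} -- may already suffice; but the robust route is a growing power, since Hilbert--Schmidt generically overestimates operator norm by a factor polynomial in $n$.)

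Combining the pieces: with probability $1 - o(1)$ every nontrivial singular value of $\Adj_\sL$ is at most some $\rho(n) = o(1)$, which is the assertion of the theorem, and together with \Cref{thm:main-quasirandom} this yields \Cref{thm:main-random}. The genuinely delicate points are (a) choosing the growing power $k$ and checking that, after the normalisation of \Cref{def:quasirandom}, the main term of the trace expansion is indeed negligible; and (b) matching the specific family of coincidence patterns to the statistics that \cite{KSSS} delivers -- those results rest on intricate switching and absorption arguments and are somewhat inflexible about which quantities they produce, so some care is needed to stay within their scope or to derive the variants one needs.
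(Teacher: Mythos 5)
Your high-level plan -- bound the nontrivial spectrum of $\Adj$ by an even trace power, expand that trace as a count of closed walks in the Markov chain, and control the walk count via the subgraph-concentration results of \cite{KSSS} -- is the paper's approach. The genuine gap is in your choice of power. Taking $k = k(n) \to \infty$ (the ``robust route'' of the generic trace method) is actually fatal here: \cite[Theorem~7.2]{KSSS} is a concentration statement about the number of copies of a \emph{fixed} latin triple system, with hypotheses (``$\alpha$-stability'') that must be checked for that system and every one of its degenerations. For a closed walk of length $2k$, the degenerations proliferate super-exponentially in $k$, and nothing in \cite{KSSS} gives uniformity over a family of configurations growing with $n$; this is precisely the ``main obstacle'' you flag, and the growing-$k$ route does not clear it. The $k = 1$ fallback you float is also a non-starter: each row of $\Adj$ has exactly $n$ nonzero entries equal to $1/n$, so $\tr \Adj^2 = n$, and the Hilbert--Schmidt norm carries the full dimension loss you were trying to avoid.

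The paper's key observation, which your proposal is missing, is that a \emph{fixed} even power already suffices, because the target is only the qualitative bound $\rho = o(1)$, not a rate. Since $\Adj$ is self-adjoint with $1$-eigenvector the constants and a real spectrum, $\tr \Adj^{6} = 1 + \sum_{\text{nontrivial}} \lambda^6 \ge 1 + \rho^6$, so $\tr \Adj^6 = 1 + o(1)$ with high probability forces $\rho = o(1)$ directly. (Sixth is the smallest even power that works: $\tr \Adj^4 = 3 + o(1)$ with high probability, and a main term of $3$ says nothing about $\rho$.) The paper then shows $\tr \Adj^6 = 1 + o(1)$ by identifying $n^6 \tr\Adj^6$ with the number of copies of a specific latin triple system $\sH_1$ (the length-$6$ closed walk, Figure~\ref{fig:trAdj6}) together with all its degenerations, enumerating those degenerations by computer ($1206$ coincidence patterns, $154$ up to isomorphism), verifying $6$-stability of each via Lemma~\ref{lem:stability-lemma}, and applying \cite[Theorem~7.2]{KSSS} to conclude $N = (1+o(1))n^6$. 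So: your steps two and three identify the right tools, but replace ``growing $k$'' with ``$k=3$,'' and note that what licenses the fixed small power is the inequality $\tr \Adj^6 \ge 1 + \rho^6$ together with the modest goal $\rho = o(1)$.
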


Somewhat opposite to random latin squares are latin squares which are multiplication tables of finite groups.
In~\cite{EMM} we proved that as long as the underlying group satisfies a necessary condition to have at least one transversal, we have the count as in \Cref{thm:main-quasirandom} with an extra factor equal to the size of the group's abelianization.
For some groups, this result is implied by \Cref{thm:main-quasirandom} and the following (easy) result.

\begin{theorem}%
    \label{thm:quasirandom-groups-are-quasirandom}
    Let $G$ be a group and let $\sL_G$ be the multiplication table of $G$.
    Then $\sL_G$ is $\Adj$-quasirandom with parameter $1/D$, where $D$ is the minimal dimension of a nontrivial linear representation of $G$.
\end{theorem}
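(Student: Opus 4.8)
The plan is to unwind \Cref{def:quasirandom} in the case $\sL = \sL_G$ and cash in the symmetry of a group multiplication table. In $\sL_G$ the rows, columns and symbols are all copies of $G$, with the entry $xy$ in row $x$, column $y$. For each $a \in G$, relabelling rows by $x \mapsto ax$ together with symbols by $s \mapsto as$ is an automorphism of $\sL_G$; independently, relabelling columns by $y \mapsto yb$ together with symbols by $s \mapsto sb$ is another. These generate an action of $G \times G$ on the space underlying $\Adj$ (in fact of a still larger symmetry group, but $G \times G$ is enough), and since $\Adj$ is built from the multiplication of $G$ in a translation-equivariant way --- concretely, for $\sL_G$ it unwinds to an averaging operator of roughly the shape $(\Adj f)(x, y) = \tfrac{1}{|G|} \sum_{a \in G} f(xa, ay)$ on $\CC^{G \times G}$ --- it commutes with this action. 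So the first step is just to record this equivariance.

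The second, and main, step is to diagonalise $\Adj$. By Peter--Weyl the space underlying $\Adj$ splits, as a $G \times G$-module, into isotypic components indexed by pairs $(\rho, \rho') \in \Irr(G)^2$, each a fixed irreducible tensored with a multiplicity space of dimension $\dim\rho\cdot\dim\rho'$; by Schur's lemma $\Adj$ preserves each component and acts there as the identity on the irreducible tensored with an operator $M_{\rho,\rho'}$ on the multiplicity space. One then evaluates $M_{\rho,\rho'}$ by hand. This is a direct application of Schur orthogonality of matrix coefficients --- schematically, $\tfrac{1}{|G|}\sum_{a\in G}\rho(a)\otimes\overline{\rho'(a)}$ vanishes unless $\rho' \cong \rho$ and otherwise is $\tfrac{1}{\dim\rho}$ times a rank-one projection --- and it yields $M_{\rho,\rho'} = 0$ unless $\rho,\rho'$ are matched in the way \Cref{def:quasirandom} cares about, in which case $\lVert M_{\rho,\rho'}\rVert = 1/\dim\rho$. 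Hence every eigenvalue of $\Adj$ other than the one coming from $\rho = \rho' = \mathbf 1$ (the constants) has absolute value at most $\max_{\rho \neq \mathbf 1} 1/\dim\rho = 1/D$, and that exceptional eigenvalue is precisely the principal one \Cref{def:quasirandom} sets aside; so $\sL_G$ is $\Adj$-quasirandom with parameter $1/D$.

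I expect no serious obstacle --- this is why the statement is billed as easy --- but a little care is needed in the bookkeeping. One should check that the eigenvalue, or eigenspace, excluded in \Cref{def:quasirandom} is exactly the trivial-representation block, so that no genuinely large nontrivial eigenvalue is silently dropped and the $(\mathbf 1, \mathbf 1)$ block is not artificially penalised; one should track normalisations so that the Schur computation produces $1/\dim\rho$ and not some other power; and one should note the mild subtlety that $1/\dim\rho$ is the clean bound over $\CC$, so when $D$ is realised by a real or quaternionic irreducible representation one checks the same numerical bound survives --- which it does, since $\Adj$ is real while Schur's lemma over $\CC$ still returns the scalar $1/\dim\rho$ on each complex block. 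With that settled the theorem follows.
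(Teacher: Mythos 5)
This is correct and is essentially the paper's own proof: the paper likewise diagonalises $\Adj$ on $L^2(G\times G)$ using the matrix-coefficient basis and the Schur orthogonality relation, which is the concrete form of your Peter--Weyl / Schur's-lemma argument. One small inaccuracy in your schematic: on the $(\rho,\rho)$ block the multiplicity operator $M_{\rho,\rho}$ is not $\tfrac{1}{\dim\rho}$ times a rank-one projection. The paper's explicit computation gives $\Adj f_{\rho,\rho,i,j,k,\ell} = \tfrac{1}{\dim\rho}\, f_{\rho,\rho,\ell,j,k,i}$, i.e.\ $M_{\rho,\rho}$ is $\tfrac{1}{\dim\rho}$ times the swap operator on the $(\dim\rho)^2$-dimensional multiplicity space, with eigenvalues $\pm\tfrac{1}{\dim\rho}$. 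Your conclusion $\lVert M_{\rho,\rho}\rVert = \tfrac{1}{\dim\rho}$ survives, which is all the theorem needs; and the real/quaternionic worry at the end is a non-issue, since everything is over $\CC$ and $D$ is by definition the minimal dimension of a nontrivial \emph{complex} representation.
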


This shows that the $\Adj$-quasirandomness condition when restricted to group multiplication tables coincides with the usual notion of quasirandomness for groups due to Gowers~\cite{gowers}.
Thus together \Cref{thm:main-quasirandom,thm:quasirandom-groups-are-quasirandom} recover the main result of~\cite{EMM} for sufficiently quasirandom groups.

There appears to be no single universal definition of a ``quasirandom latin square'', in the same way that there is no single definition of a ``quasirandom set of integers''.
Instead there are various possible qualitatively inequivalent definitions, some more natural than others, and the correct choice depends on the problem at hand.
For this reason we prefer to talk about a \emph{quasirandomness condition} than about a ``definition of quasirandomness'', and we do not claim that the condition in \Cref{def:quasirandom} is necessarily the correct one for other problems.
In particular it is not directly related to the notion introduced in \cites{MR4012871, MR4456029}, since that depends on some additional structure (namely, an ordering on the set of symbols) to which our condition is oblivious.
See \Cref{sec:dense-minor} for further remarks.

\section{Outline}%
\label{sec:outline}

Our approach is analytical rather than combinatorial. Let $X$, $Y$, $Z$ be $n$-element sets of rows, columns and symbols. We identify an $n \times n$ latin square $\sL$ with a subset of $X \times Y \times Z$ satisfying the latin square property, i.e., every pair from $X \times Y$, $Y \times Z$ and $Z \times X$ is in exactly one triple from $\sL$.
We let $L^2(X), L^2(Y), L^2(Z)$ denote the spaces of complex-valued functions on $X$, $Y$, $Z$ (equipped with the standard hermitian inner product). The \emph{latin square tensor} $\Lambda = \Lambda_\sL$ is defined by
\[
    \Lambda(f, g, h) := \EE_{(x, y, z) \in \sL} f(x) g(y) h(z)
\]
for $f \in L^2(X), g \in L^2(Y), h \in L^2(Z)$.

We stress that the latin square tensor $\Lambda_\sL$ depends on $\sL$, but we will always just write $\Lambda$ for brevity.
We use the same notation for powers of $\sL$, in the following sense.
If $\sL_1$ and $\sL_2$ are latin squares then $\sL_1 \times \sL_2$ is also a latin square, where $\sL_1 \times \sL_2$ is simply the cartesian product of $\sL_1$ and $\sL_2$ as subsets of $X_1 \times Y_1 \times Z_1$ and $X_2 \times Y_2 \times Z_2$.
Accordingly the powers $\sL^m$ are latin squares of order $n^m$ for all $m \geq 0$, and if $f \in L^2(X^m), g \in L^2(Y^m), h \in L^2(Z^m)$ then we write
\[
    \Lambda(f, g, h) := \EE_{(x, y, z) \in \sL^m} f(x) g(y) h(z).
\]

Of particular interest is the latin square $\sL^n$. We write $S$ (or sometimes $S_X$ to emphasize the domain, and similarly $S_Y$ and $S_Z$) for the subset $S \subset X^n$ of all bijections $[n] \to X$.
Then one can check that the number of transversals in $\sL$ is
\[
    \Lambda(1_S, 1_S, 1_S) \frac{n^{2n}}{n!}.
\]
Our goal is therefore to show that
\begin{equation}
    \label{eq:main-Lambda-statement}
    \Lambda(1_S, 1_S, 1_S) = \bigl(e^{-1/2} + o(1)\bigr) \pfrac{n!}{n^n}^3,
\end{equation}
provided that $\sL$ satisfies an appropriate quasirandomness condition.

We approach~\eqref{eq:main-Lambda-statement} principally by studying how $1_S$ deviates from its density $n!/n^n$. We do this as follows.
For any set $A \subset [m]$, we may identify $L^2(X^A)$ with the subspace of $L^2(X^m)$ consisting of functions $X^m \to \CC$ that factor as $X^m \to X^A \to \CC$; i.e., functions $f(x_1,\dots,x_m)$ that only depend on $(x_i \colon i \in A)$.
These spaces are nested: if $A \subset B$ then $L^2(X^A) \subset L^2(X^B)$.
We write $Q_A$ for the orthogonal projection $L^2(X^m) \to L^2(X^A)$
and $P_A$ for the orthogonal projection
\begin{equation}%
  \label{eq:pa-proj}
    P_A : L^2(X^m) \to L^2(X^A) \cap \bigcap_{B \subsetneq A} L^2(X^B)^\perp.
\end{equation}
Here $L^2(X^B)^\perp$ is the space of functions $f(x_1, \dots, x_m)$ orthogonal to functions depending only on $(x_i : i \in B)$, i.e., such that $\EE_{x_i : i \notin B} f(x_1, \dots, x_m) = 0$ for any choice of $(x_i : i \in B)$.
Therefore the space on the right-hand side of~\eqref{eq:pa-proj} is the space of functions depending only on $(x_i : i \in A)$ and such that $\EE_{x_i} f(x_1, \dots, x_m) = 0$ for any $i \in A$.

The operators $P_A$, $Q_A$ are related via inclusion--exclusion rules:
\begin{align*}
    Q_A &= \sum_{B \subset A} P_B, \\
    P_A &= \sum_{B \subset A} (-1)^{|A \setminus B|} Q_B.
\end{align*}
Hence we have a kind of ``Fourier expansion''
\[
    f = \sum_{A \subset [m]} P_A f,
\]
for any function $f \in L^2(X^m)$ (which is only truly a Fourier expansion if $n = 2$ and $X^m$ is identified with $\FF_2^m$).
Applying this to $f = 1_S \in L^2(X^n)$,
\[
    1_S = \sum_{A \subset [n]} P_A 1_S.
\]
By the discussion above, $P_A 1_S$ can be thought of as the component of $1_S$ which depends exactly on $(x_i : i \in A)$ (and is orthogonal to all functions depending only on variables in a strict subset of $A$).
For example, $P_\emptyset 1_S$ is equal to the density $n!/n^n$.

The relevance of the $P_A$ projections is that any latin square tensor $\Lambda_{\sL}$ is diagonal with respect to this decomposition: that is,
\begin{equation}
    \label{eq:LambdaPA-sum}
    \Lambda(1_S, 1_S, 1_S) = \sum_{A \subset [n]} \Lambda(P_A1_S, P_A1_S, P_A1_S).
\end{equation}
This is a consequence of the following lemma.
\begin{lemma}
    Let $f \in L^2(X^n), g \in L^2(Y^n), h\in L^2(Z^n)$ and $A, B, C \subset [n]$.
    Then
    \[
        \Lambda(P_A f, P_B g, P_C h) = 0
    \]
    unless $A = B = C$.
\end{lemma}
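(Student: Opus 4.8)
The plan is to expand the definition of $\Lambda$ and exploit the latin square property to show that the relevant expectation factors in a way that forces two of the index sets to coincide, then use the defining orthogonality of the $P_A$ projections. First I would reduce to the case of monomial test functions: since $L^2(X^A)$ is spanned by functions of the form $\prod_{i\in A} \phi_i(x_i)$ and $P_A$ maps into the subspace of such functions satisfying $\EE_{x_i}\phi_i=0$ for each $i\in A$, it suffices to prove that $\Lambda(f,g,h)=0$ whenever $f=\prod_{i\in A}\phi_i(x_i)$, $g=\prod_{j\in B}\psi_j(y_j)$, $h=\prod_{k\in C}\chi_k(z_k)$ with each $\phi_i,\psi_j,\chi_k$ of mean zero, and $A,B,C$ not all equal. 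Writing out the definition,
\[
  \Lambda(f,g,h) = \EE_{(x,y,z)\in\sL^n} \prod_{i\in A}\phi_i(x_i)\prod_{j\in B}\psi_j(y_j)\prod_{k\in C}\chi_k(z_k),
\]
and since $\sL^n$ is the $n$-fold product of $\sL$, a uniform random $(x,y,z)\in\sL^n$ is just $n$ independent uniform random triples $(x_\ell,y_\ell,z_\ell)\in\sL$, one for each coordinate $\ell\in[n]$. Hence the expectation factors as a product over $\ell\in[n]$ of single-coordinate expectations
\[
  \Lambda(f,g,h) = \prod_{\ell\in[n]} \EE_{(x_\ell,y_\ell,z_\ell)\in\sL}\Bigl(\phi_\ell(x_\ell)^{[\ell\in A]}\,\psi_\ell(y_\ell)^{[\ell\in B]}\,\chi_\ell(z_\ell)^{[\ell\in C]}\Bigr),
\]
where a factor is simply $1$ in a given coordinate if that coordinate lies in none of $A,B,C$ (and where, to be pedantic, one should just read $\phi_\ell^{[\ell\in A]}$ as $\phi_\ell$ if $\ell\in A$ and the constant $1$ otherwise).

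So it suffices to show that a single coordinate $\ell$ lying in the symmetric difference of the three sets contributes a vanishing factor; this is where the latin square property does the work. Suppose $\ell\in A$ but $\ell\notin B$ and $\ell\notin C$ (the other cases are symmetric under permuting the roles of rows, columns, symbols). Then the $\ell$-th factor is $\EE_{(x,y,z)\in\sL}\phi_\ell(x)$. But for a latin square, the marginal distribution of the row-coordinate $x$ of a uniform random entry $(x,y,z)\in\sL$ is uniform on $X$: every row contains exactly $n$ entries out of $n^2$. Hence $\EE_{(x,y,z)\in\sL}\phi_\ell(x)=\EE_{x\in X}\phi_\ell(x)=0$, and the whole product vanishes. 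The only remaining case is when $\ell$ belongs to exactly two of the three sets, say $\ell\in A\cap B$ but $\ell\notin C$; then the $\ell$-th factor is $\EE_{(x,y,z)\in\sL}\phi_\ell(x)\psi_\ell(y)$. Here I use that in a latin square the pair $(x,y)\in X\times Y$ is uniformly distributed: every such pair occurs in exactly one entry. Therefore $\EE_{(x,y,z)\in\sL}\phi_\ell(x)\psi_\ell(y)=\EE_{x}\phi_\ell(x)\cdot\EE_{y}\psi_\ell(y)=0$ since each factor has mean zero. In all cases, as soon as $A$, $B$, $C$ are not all equal there is some coordinate $\ell$ in exactly one or exactly two of them, and the corresponding factor vanishes.

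The only genuinely delicate point — and the step I would be most careful about — is the reduction to monomial functions together with the bookkeeping of which single-coordinate marginal is being used: one must check that the image of $P_A$ really is spanned by product functions each of whose active factors has mean zero, so that the coordinatewise factorization above is legitimate and each offending factor is one of the three types $\EE\,\phi(x)$, $\EE\,\phi(x)\psi(y)$, or $\EE\,\phi(x)\psi(y)\chi(z)$ with at least one mean-zero factor whose variable is "free" in the relevant marginal. Concretely, $L^2(X^A)\cap\bigcap_{i\in A}L^2(X^{A\setminus\{i\}})^\perp$ is exactly the tensor product $\bigotimes_{i\in A}\bigl(L^2(X)\ominus\CC\bigr)$ of mean-zero spaces, which is spanned by the claimed monomials; this is a standard fact about the $P_A$ decomposition but worth stating cleanly before invoking it. Once that is in place, everything else is the two-line marginal computation above, and bilinearity (trilinearity) of $\Lambda$ extends the vanishing from monomials to all of the image of $P_A\otimes P_B\otimes P_C$. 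This lemma then immediately gives the diagonalization \eqref{eq:LambdaPA-sum} by expanding each $1_S=\sum_A P_A 1_S$ and discarding all off-diagonal terms.
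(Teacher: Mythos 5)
Your proof is correct and rests on the same key idea as the paper's: the latin square property makes the marginal of any two of the three coordinates in a uniform triple from $\sL$ uniform on the corresponding product set, which together with the mean-zero property of $\im P_A$ forces the expectation to vanish in any coordinate where $A$, $B$, $C$ disagree. The paper's argument is slightly more economical: rather than first reducing to pure tensors $\bigotimes_{i\in A}(L^2(X)\ominus\CC)$ and fully factoring the expectation over all $n$ coordinates, it conditions on everything except a single index $i\in A\setminus B$ and averages over that one triple $(x_i,y_i,z_i)\in\sL$, reaching the same conclusion for general $f=P_Af$, $g=P_Bg$, $h=P_Ch$ without the monomial reduction.
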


\begin{proof}
    Assume it is not the case that $A = B = C$.
    By symmetry we may assume $A \not\subseteq B$, say $i \in A \setminus B$.
    We may also assume $P_A f = f$, $P_B g = g$, $P_C h = h$, by replacing $f, g, h$ with their images under $P_A, P_B, P_C$, respectively.
    Now consider
    \[
        \Lambda(f, g, h) = \EE_{(x,y,z) \in \sL^n} f(x) g(y) h(z).
    \]
    In particular consider the average over the variables $(x_i, y_i, z_i) \in \sL$. Since $i \notin B$, there is no dependence on $y_i$, so it is equivalent by the latin square property to average over all $(x_i, z_i) \in X \times Z$. Since $\EE_{x_i} f(x_1, \dots, x_m) = 0$, it follows that $\Lambda(f,g,h) = 0$.
\end{proof}

We now divide up the sum~\eqref{eq:LambdaPA-sum} according to the size $m$ of $A$.

\subsection{Major arcs}%
\label{subsec:major}

The terms in this decomposition where $A$ is very sparse (of size up to $c n^{1/2}$) form the \emph{major arcs}.

\begin{theorem}%
    \label{thm:major-arcs}
    There is a constant $c>0$ such that for $\log n < m \leq c n^{1/2}$,
    \[
        \sum_{\substack{A \subset [n] \\ |A| \le m }}
        \Lambda(P_A 1_S,\: P_A 1_S,\: P_A 1_S)
        = \pfrac{n!}{n^{n}}^3 e^{-1/2} \br{1 + O(m^2/n)}.
    \]
\end{theorem}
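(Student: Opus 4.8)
The plan is to evaluate the sum term by term in $k=|A|$ and to recognise the result as a truncated Gaussian series.

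\textit{Reduction to one parameter.} First, permuting the $n$ coordinates of $\sL^n$ is a symmetry of $\Lambda$ and of $1_S$ that carries $P_A$ to $P_{A'}$ whenever $|A|=|A'|$, so $\Lambda(P_A1_S,P_A1_S,P_A1_S)$ depends only on $k:=|A|$; call it $\lambda_k$, so the left-hand side is $\sum_{k\le m}\binom nk\lambda_k$ and $\lambda_0=(n!/n^n)^3$. One has $Q_A1_S=\pfrac{(n-k)!}{n^{n-k}}\,1[(x_i)_{i\in A}\text{ injective}]$ and $P_A=P_AQ_A$, hence $P_A1_S=\pfrac{(n-k)!}{n^{n-k}}\psi_k$, where $\psi_k=P_{[k]}$ applied to the indicator of injective $k$-tuples in $X$, an element of $L^2(X^k)$. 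Since $\psi_k$ depends only on the coordinates in $A$, factoring $\sL^n=\sL^A\times\sL^{[n]\setminus A}$ gives $\lambda_k=\pfrac{(n-k)!}{n^{n-k}}^3\Lambda_{\sL^k}(\psi_k,\psi_k,\psi_k)$, and it remains to estimate $\Lambda_{\sL^k}(\psi_k,\psi_k,\psi_k)$ for $k\le cn^{1/2}$ and reassemble, using $\binom nk\pfrac{(n-k)!}{n^{n-k}}^3=(n!/n^n)^3\tfrac{n^k}{k!}\bigl(1+O(k^2/n)\bigr)$.

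\textit{Cluster expansion.} Expanding $P_{[k]}=\prod_i(I-\text{``average over coordinate }i\text{''})$ writes $\psi_k$ as an explicit rational combination of indicators $1[(x_i)_{i\in D}\text{ injective}]$, $D\subseteq[k]$. Substituting into $\Lambda_{\sL^k}(\psi_k,\psi_k,\psi_k)=\EE_{(x,y,z)\in\sL^k}\psi_k(x)\psi_k(y)\psi_k(z)$ and then expanding each injectivity indicator by Möbius inversion over the partition lattice reduces $\lambda_k$ to a signed combination of probabilities
\[
  \PP\bigl[\text{a prescribed pattern of coincidences among }x_1,\dots,x_k,\,y_1,\dots,y_k,\,z_1,\dots,z_k\bigr],
\]
where $(x_i,y_i,z_i)$ are independent uniform elements of $\sL$. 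Such a probability factors over the connected components --- ``clusters'' --- of the tri-coloured coincidence graph on $[k]$, and, using that the rows and columns of $\sL$ are bijections, each cluster evaluates to an explicit negative power of $n$ times $1+O(t^2/n)$, with $t$ the number of indices it touches: a lone coincidence in one coordinate gives $1/n$, a coincidence across two coordinates --- equivalently a full coincidence $(x_i,y_i,z_i)=(x_j,y_j,z_j)$ --- gives $1/n^2$, and so on. This evaluation is independent of $\sL$ except when the cluster contains an intercalate-type substructure; the smallest such cluster touches four indices and is suppressed by an extra factor $1/n$.

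\textit{The Gaussian main term.} The dominant contribution to $\binom nk\lambda_k$ comes from patterns in which $k$ is even, $k=2j$, and $[k]$ is split into $j$ full-coincidence pairs. A short sign count gives $\Lambda_{\sL^2}(\psi_2,\psi_2,\psi_2)=-1/n^2+O(1/n^3)$ for the basic pair, and $\psi_{2j}$ is, modulo lower-order terms, the sum over perfect matchings of $[2j]$ of the corresponding tensor product of copies of $\psi_2$; hence, by a Wick-type expansion, $\Lambda_{\sL^{2j}}(\psi_{2j},\psi_{2j},\psi_{2j})=(2j-1)!!\,(-1)^j n^{-2j}\bigl(1+O(j^2/n)\bigr)$, while odd $k$ are lower order. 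Combined with the factorial identity above, $\binom nk\lambda_k=(n!/n^n)^3\bigl(a_k+O(\mathrm{poly}(k)/n)\bigr)$, where $a_k$ is the coefficient of $x^k$ in $e^{-x^2/2}$ (so $a_{2j}=(-1/2)^j/j!$ and $a_{2j+1}=0$); the exponential $e^{-x^2/2}$ appears by the exponential formula, a single full-coincidence pair carrying exponential generating function $-x^2/2$. Since $\sum_{k\ge0}a_k=e^{-1/2}$, the tail $\sum_{k>m}a_k$ is super-exponentially small for $m>\log n$, and the accumulated error $\sum_{k\le m}O(\mathrm{poly}(k)/n)$ --- from the $1+O(k^2/n)$ factors, from subleading and larger clusters, and from the $\sL$-dependent (intercalate) clusters --- is $O(m^2/n)$ once $c$ is a small enough absolute constant. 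Putting these together yields the claimed identity.

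\textit{Main difficulty.} The crux will be the last step: one must check that after both layers of inclusion--exclusion and the binomial weights the leading coefficient is \emph{exactly} the Gaussian $a_k$, and that every remaining contribution --- in particular the $\sL$-dependent clusters, which genuinely perturb the individual $\lambda_k$ from $k=4$ onward and which for an arbitrary latin square can be as large as its intercalate (and higher) counts permit --- is absorbed uniformly into the error $O(m^2/n)$ throughout the range $k\le cn^{1/2}$. Keeping the per-cluster error series geometric is precisely what the restriction $m\le cn^{1/2}$ buys.
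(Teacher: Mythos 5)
The skeleton of your argument---reduce by symmetry to a single parameter $k=|A|$, expand $P_A 1_S$ via two layers of M\"obius inversion (subset inclusion--exclusion, then the partition lattice), and identify the dominant $k$-th contribution with the Gaussian coefficient $a_k=[x^k]e^{-x^2/2}$ coming from matchings---is exactly the skeleton the paper uses, and your individual computations (e.g.\ $\Lambda_{\sL^2}(\psi_2,\psi_2,\psi_2)=-n^{-2}(1-1/n)$, the $(2j-1)!!$ matching count, the appearance of $e^{-x^2/2}$ via the exponential formula) are correct. But there is a genuine gap in the error control, and it is precisely the step you flag as the ``crux.'' Your claimed per-$k$ estimate $\binom nk\lambda_k=(n!/n^n)^3\bigl(a_k+O(\mathrm{poly}(k)/n)\bigr)$ would not suffice even if proved: summing it over $k\le m$ accumulates $O(m\cdot\mathrm{poly}(m)/n)$, which for $m\asymp n^{1/2}$ is unbounded unless the polynomial has degree at most one, and nothing in the cluster picture forces that. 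Worse, the estimate itself is not a consequence of naive cluster counting: the positive-complexity partition systems on a size-$k$ support proliferate, each carries M\"obius weights that are not uniformly small, and each carries an $\sL$-dependent factor ($\gamma_0(\psystem)\in[0,1]$ in the paper's notation) for which one has only the crude one-sided bound $\gamma_0\le 1$. Absolute-value summation of the non-matching clusters does not give $O(m^2/n)$.

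The paper closes this gap with the polynomial device $M_m(z)$: a formal variable $z$ is attached to the complexity $\cx(\psystem)$ of each partition system, so that $M_m=M_m(1/n)$ while $M_m(0)$ isolates the Gaussian matching term. One then proves (Proposition~\ref{prop:major-arcs-estimate}) that $M_m(z)$ remains controlled on the much larger disc $|z|\le (c/m)^2$---possible because the weighted sum over partition systems has a usable generating-function bound there---and the residue theorem / Cauchy's inequality then yields $|M_m-M_m(0)|\ll (m^2/n)(n!/n^n)^3$, the gain being the ratio $(1/n)\big/(c/m)^2=m^2/(c^2 n)$ between evaluation radius and analyticity radius. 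It is this contour step that exploits the cancellations among positive-complexity systems and converts the restriction $m\le cn^{1/2}$ into an actual $O(m^2/n)$ error term; a term-by-term cluster bound of the kind you describe misses it. To repair your proposal you would need either to reproduce this analytic step or to supply an equivalent structural argument establishing the needed cancellation among non-matching clusters.
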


The proof is a mostly mechanical adaptation of~\cite[Section~4]{EMM}, which did not use group theory in an essential way.

\subsection{Sparse minor arcs}%
\label{subsec:sparse-minor}

The next range, the \emph{sparse minor arcs}, concerns $A$ of size up to $c n$ for some small absolute constant $c$.

\begin{theorem}%
    \label{thm:sparse-minor}
    There is a constant $c>0$ such that for $1 \leq m \leq cn$,
    \[
        \sum_{\substack{A \subset [n] \\ |A|=m}}
        \Lambda(|P_A1_S|, |P_A1_S|, |P_A1_S|)
        \leq \pfrac{n!}{n^n}^3 O(1)^{m + n/m} (m/n)^{m/8}.
    \]
\end{theorem}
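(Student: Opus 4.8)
The plan is to collapse the sum by symmetry, find an exact formula for $P_{[m]}1_S$, isolate the cancellation forced by coordinates on which nothing collides, and then estimate the resulting sum over collision patterns using the rigidity of the latin square.

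Since $\sL^n$ and $1_S$ are invariant under permuting the $n$ coordinates, all terms on the left-hand side are equal; writing $g_m:=P_{[m]}1_S\in L^2(X^m)$, and using that $|P_A1_S|$ depends only on the coordinates in $A$ so that $\Lambda$ restricted to such functions is the tensor of $\sL^m$, the left side equals $\binom nm\,\Lambda(|g_m|,|g_m|,|g_m|)$. Since $(n/m)^m\le\binom nm\le(en/m)^m$, it therefore suffices to prove $\Lambda(|g_m|,|g_m|,|g_m|)\le(n!/n^n)^3\,O(1)^{m+n/m}(m/n)^{9m/8}$. A direct computation gives $Q_B1_S=\rho_{|B|}1_{S_B}$, where $\rho_k:=(n-k)!/n^{n-k}$ and $S_B\subset X^B$ is the set of injections; in particular $\rho_0=n!/n^n$ and $r_k:=\rho_k/\rho_0=n^k/n^{\underline k}$ lies in $[1,O(1)^k]$ for $k\le cn$. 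Hence $g_m=\sum_{B\subseteq[m]}(-1)^{m-|B|}\rho_{|B|}1_{S_B}$, and at a point $x$ whose coordinates fall into a collision partition $\pi$ with part sizes $s_1,\dots,s_b$,
\[ g_m(x)=(-1)^m\rho_0\sum_{k=0}^b(-1)^k\,r_k\,e_k(s_1,\dots,s_b)=:g_m(\pi), \]
$e_k$ being the $k$-th elementary symmetric polynomial.

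The key lemma is that $g_m(\pi)$ is small when $\pi$ has many singleton parts. If the $r_k$ were all $1$ this sum would be $(-1)^m\rho_0\prod_i(1-s_i)$, which vanishes as soon as $\pi$ has a singleton. To see what survives in general, expand $r_k=\sum_{\ell\ge0}n^{-\ell}\,c_\ell(k)$ (convergent for $m\le cn$ with $c$ small, with $c_\ell(k)=h_\ell(0,1,\dots,k-1)$ a polynomial in $k$ of degree $\le2\ell$ of controlled size), and combine with the identity
\[ \sum_k(-1)^k\,k^{\underline j}\,e_k(\mathbf s)=(-1)^j\,j!\sum_{|I|=j}\ \prod_{i\in I}s_i\ \prod_{i\notin I}(1-s_i), \]
whose right-hand side vanishes whenever $j$ is smaller than the number $p=p(\pi)$ of singleton parts of $\pi$ (each singleton index is forced into $I$). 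Since each $c_\ell$ is a combination of falling factorials $k^{\underline j}$ with $j\le2\ell$, every term of the expansion with $\ell<p/2$ cancels entirely, which yields
\[ |g_m(\pi)|\ \le\ \rho_0\,O(1)^m\,n^{-\lceil p(\pi)/2\rceil}\,M(\pi), \]
where $M(\pi)$ is a moderate factor (polynomial of degree $O(m)$ in the part sizes), calibrated so that $\sum_\pi\PP_{x\in X^m}[\pi(x)=\pi]\cdot n^{-\lceil p/2\rceil}M(\pi)$ is of order $O(1)^m(m/n)^{m/2}$, matching the true size of $\|g_m\|_1$.

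Finally, for $(x,y,z)\in\sL^m$ the three collision partitions satisfy $\pi_x\wedge\pi_y=\pi_x\wedge\pi_z=\pi_y\wedge\pi_z$ (agreement in two of $x,y,z$ forces agreement in the third), and since $(x,y)$ is uniform on $X^m\times Y^m$ with $z$ determined, counting solutions in the fibres of the map $(x,y)\mapsto z$ gives, for any such compatible triple $(\pi_1,\pi_2,\pi_3)$,
\[ \PP_{(x,y,z)\in\sL^m}\big[\pi_x=\pi_1,\ \pi_y=\pi_2,\ \pi_z=\pi_3\big]\ \le\ O(1)^m\,n^{\,b_1+b_2+b_3-b_0-2m}, \]
where $b_i$ is the number of parts of $\pi_i$ and $b_0$ that of $\pi_0=\pi_1\wedge\pi_2\wedge\pi_3$: each partition pays $n^{-1}$ per excess collision, with a refund of $n$ per excess collision of the common refinement. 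Expanding $\Lambda(|g_m|,|g_m|,|g_m|)=\sum_{\pi_1,\pi_2,\pi_3}\PP[\cdots]\,|g_m(\pi_1)||g_m(\pi_2)||g_m(\pi_3)|$ over compatible triples and inserting the two displayed bounds reduces the problem to a purely combinatorial sum. Carrying out this sum with uniform control over $1\le m\le cn$ is the main obstacle: once $m\gg\sqrt n$ the $r_k$ are no longer close to $1$, so the cancellation lemma must be applied robustly, and the entropy of summing over compatible triples of partitions has to be beaten by the accumulated $(m/n)^{1/2}$ of cancellation per unit of excess collision on each of the three sides together with the matching structure. Optimizing the trade-off between singleton parts (probable, but each contributing only $(m/n)^{1/2}$) and larger parts (each contributing $(m/n)$ but costing entropy) produces the exponent $m/8$, with the factor $O(1)^{m+n/m}$ absorbing the combinatorial overhead and the slack between $\binom nm$ and $(n/m)^m$.
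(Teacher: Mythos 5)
Your setup (reduction to $A=[m]$ by symmetry, and the exact formula $g_m(\pi)=(-1)^m\rho_0\sum_k(-1)^kr_k e_k(s_1,\dots,s_b)$) is correct and agrees with the paper's \Cref{prop:PA1S-sparseval-like-expression}. But the two bounds on which your argument hinges are both wrong.

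\textbf{The pointwise bound on $g_m(\pi)$.} You claim $|g_m(\pi)|\le\rho_0\,O(1)^m\,n^{-\lceil p/2\rceil}M(\pi)$ with $M$ ``moderate'' (polynomial in part sizes). Take $\pi$ discrete, so $p=m$ and all part sizes are $1$. Then $g_m(\pi)=\rho_0\,U\bigl((z-1)^m\bigr)$, which by \Cref{prop:umbral-sparseval} can be of order $\rho_0\,e^{O(m)}(m/n)^{m/2}$. For your bound to accommodate this you would need $M(\pi)\gtrsim m^{m/2}$, which is not $O(1)^m$ and is certainly not a polynomial bounded in terms of the part sizes (all equal to $1$). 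The correct exponent is $(m/n)^{p/2}$, not $n^{-p/2}$; once $m\gg\sqrt n$, the gap $m^{p/2}$ is the whole game, so the ``calibration'' you appeal to is vacuous without a precise definition of $M$. The paper's \Cref{prop:PA1S-physical-bound} has the right shape: each singleton contributes $\delta=(Cm/n)^{1/2}$.

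\textbf{The probability of a compatible triple.} You claim
\[
\PP_{(x,y,z)\in\sL^m}\bigl[\pi_x=\pi_1,\pi_y=\pi_2,\pi_z=\pi_3\bigr]\le O(1)^m\,n^{\,b_1+b_2+b_3-b_0-2m},
\]
with $b_0=|\pi_1\wedge\pi_2\wedge\pi_3|$. This fails. Take $m=6$, $\sL$ the multiplication table of $\ZZ/n\ZZ$, and the three matchings
\[
\pi_1=\{\{1,2\},\{3,4\},\{5,6\}\},\quad \pi_2=\{\{2,3\},\{4,5\},\{6,1\}\},\quad \pi_3=\{\{1,4\},\{2,5\},\{3,6\}\}.
\]
Here $b_1=b_2=b_3=3$, the meet is discrete so $b_0=6$, and your exponent is $9-6-12=-9$. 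But a direct count (writing the nine equalities as linear equations in the six free variables of $x$ and $y$) shows exactly one of the three constraints on $z$ is redundant, so there are $\Theta(n^4)$ solutions and the probability is $\Theta(n^{-8})$, not $O(n^{-9})$. The redundancy is not captured by the meet: it is a cycle-type phenomenon in the tripartite structure, precisely what \Cref{claim:cool-crank-value} quantifies (the correct ``refund'' is the quantity $k$ counting join-cells on which all three restrictions are full matchings, which equals $1$ here even though $\rank(\pi_0)=0$). Your heuristic ``refund $n$ per excess collision of the common refinement'' systematically miscounts whenever the three partitions form a cycle without a common coarse part, and such configurations dominate $\Lambda(Q,Q,Q)$, so the error propagates fatally.

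\textbf{The missing structure.} Even with both bounds corrected you would still be facing the sum over all compatible triples of partitions, uniformly for $1\le m\le cn$, which you explicitly defer as ``the main obstacle''. The paper's proof resolves this by two devices that are absent from your proposal: (i) a split $R=R^\sharp+R^\flat$ according to the number of cells of size $\ge 3$ in $\ker x$, with the flat part controlled in $L^1$ and discharged via $|\Lambda(f,g,h)|\le\|f\|_1\|g\|_2\|h\|_2$; and (ii) a further majorization of the sharp part by partitions in $\Pi^{(2)}$ only, which is exactly where the combinatorial rank can be computed exactly (\Cref{claim:cool-crank-value}) and the generating-function sum in \Cref{lem:LambdaQQQ} can be closed. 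Without restricting to $\Pi^{(2)}$ the crank is not exactly computable and the sum does not close. So while your outline points at the right objects, the two quantitative claims it rests on are false as stated, and the decisive structural steps are not present.
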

Note $|\Lambda(P_A1_S, P_A1_S, P_A1_S)| \le \Lambda(|P_A1_S|, |P_A1_S|, |P_A1_S|)$ by the triangle inequality.
The point is we have an exponential-in-$m$ gain over the main term provided
\[
    m \log (n/m) > C'(m + n/m),
\]
for some large enough $C' > 0$. This would be satisfied as long as
\begin{equation}
    \label{eq:C-eps}
    C(n / \log n)^{1/2} < m < \eps n,
\end{equation}
for some large enough $C > 0$ and small enough $\eps > 0$.

We prove \Cref{thm:sparse-minor} by exhibiting a majorant for $|P_A 1_S|$ and then using generating function methods.

\subsection{Dense minor arcs}%
\label{subsec:high-entropy}

Finally we have the dense range, where $m \geq c n$.
Here we use quasirandomness.
To be precise we define a certain Markov chain on $X \times Y$,
with adjacency operator $\Adj$,
and we consider $\sL$ to be $\Adj$-quasirandom with parameter $\rho$ if $\Adj$ has a spectral gap at least $1 - \rho$, i.e., if the spectral radius of $\Adj - \cU$ is at most $\rho$, where $\cU$ is the projection to constants (the uniform distribution).
See \Cref{def:quasirandom}.

\begin{theorem}%
    \label{thm:dense-minor}
    For every $\eps>0$ there is $\rho>0$ such that if $\sL$ is $\Adj$-quasirandom with parameter $\rho$, then
    \[
        \sum_{\substack{A \subset [n] \\ |A| \geq \eps n}}
        |\Lambda(P_A1_S, P_A1_S, P_A1_S)|
        \leq \pfrac{n!}{n^n}^3 10^{-n}.
    \]
\end{theorem}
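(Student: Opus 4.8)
The goal is to bound the sum of $|\Lambda(P_A 1_S, P_A 1_S, P_A 1_S)|$ over large $A$ by something exponentially small. I would first reduce to a uniform-in-$A$ bound: since there are at most $2^n$ sets $A$, it suffices to show that for each individual $A$ with $|A| \ge \eps n$ we have $|\Lambda(P_A 1_S, P_A 1_S, P_A 1_S)| \le (n!/n^n)^3 \cdot 20^{-n}$, say, and then sum the geometric-type series. So fix $A$ with $m := |A| \ge \eps n$. The first real step is to interpret $\Lambda(P_A 1_S, P_A 1_S, P_A 1_S)$ spectrally. Because the spaces $L^2(X^A)$ only involve the coordinates in $A$, and the latin-square tensor factors coordinatewise over $\sL^n = \sL^A \times \sL^{[n]\setminus A}$, one should be able to write $\Lambda(P_A f, P_A g, P_A h)$ purely in terms of the order-$m$ latin square $\sL^A \cong \sL^m$ and the projections $P_A$ at that level (the "top" projection killing any dependence on a proper subset of the $m$ coordinates). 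Concretely I expect $\Lambda(P_A 1_S, P_A 1_S, P_A 1_S) = (\text{density factors}) \cdot \langle \text{something}, (\text{transfer operator})^{?} \,\text{something}\rangle$ where the transfer operator is built from $\Adj$.

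**Bringing in quasirandomness.** The key is to express the trilinear form so that powers of $\Adj - \cU$ appear. Here is the mechanism I would aim for. Define, on $L^2(X \times Y)$, the operator $\Adj$ as in the paper's Markov chain (roughly: from $(x,y)$ move to $(x', y')$ by picking a random $z$ with $(x,y,z)\in\sL$, then... — whatever the precise definition in \Cref{def:quasirandom}). Then $\Lambda(f,g,h)$ for $f,g,h$ on the $n$-fold product can be unfolded, one coordinate at a time, as a composition of $n$ copies of a single-coordinate operator; the contribution of a coordinate $i \in A$ involves $\Adj$ acting nontrivially (because $P_A 1_S$ has mean zero in the $x_i$, $y_i$, and $z_i$ directions, projecting onto the orthogonal complement of constants), while a coordinate $i \notin A$ contributes the trivial rank-one piece $\cU$. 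The mean-zero condition forces the relevant operator to be $\Adj$ composed with the projection orthogonal to constants, i.e. effectively $\Adj - \cU$ (or $(\Adj-\cU)$ sandwiched by mean-zero projections), on each of the $m$ active coordinates. Since $\sL$ is $\Adj$-quasirandom with parameter $\rho$, the spectral radius of $\Adj - \cU$ is at most $\rho$, so each active coordinate contributes a factor bounded by $\rho$ in operator norm, giving a factor $\rho^{m}$ overall, up to normalization constants. The remaining norms of $1_S$-type vectors contribute at most the density factors $(n!/n^n)$ to appropriate powers (this is where one uses $\|P_A 1_S\| \le \|1_S\|$ and $\|1_S\|_2^2 = n!/n^n$, together with the trilinear structure).

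**Assembling the bound.** Putting this together, for $|A| = m \ge \eps n$ one gets $|\Lambda(P_A 1_S, P_A 1_S, P_A 1_S)| \le (n!/n^n)^3 \, C^n \rho^{\,\eps n}$ for some absolute constant $C$ (the $C^n$ absorbing combinatorial and normalization overhead that I am not tracking carefully here). Choosing $\rho$ small enough that $C \rho^{\eps} < 1/20$, and then summing over the at most $2^n$ sets $A$ of size $\ge \eps n$, yields $\sum_{|A|\ge \eps n} |\Lambda(P_A 1_S, P_A 1_S, P_A 1_S)| \le 2^n (n!/n^n)^3 20^{-n} \le (n!/n^n)^3 10^{-n}$, as required. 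This is why the theorem only needs $\rho$ depending on $\eps$, with no lower bound on $m$ beyond $\eps n$.

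**Main obstacle.** The delicate part is not the spectral estimate itself but setting up the correct operator-theoretic identity: I need to show cleanly that $\Lambda(P_A 1_S, P_A 1_S, P_A 1_S)$, a trilinear form on an $n$-fold product, genuinely unfolds into a product/trace expression in which exactly the coordinates of $A$ contribute a copy of $\Adj$ restricted to the mean-zero subspace, with the $\cU$-part automatically projected away by the $P_A$ mean-zero conditions. This requires carefully matching the three factors $1_S$ (on rows, columns, symbols) through the latin-square tensor, checking that the relevant single-coordinate operator really is (conjugate to) $\Adj$, and controlling the normalization constants so the overhead is genuinely only $C^n$ and not something like $n^{cn}$ that would swamp $\rho^{\eps n}$. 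Verifying that $\|1_S\|$ and the various induced norms behave as claimed — in particular that projecting to $L^2(X^A)$ and taking absolute values in the trilinear form does not lose more than a constant-per-coordinate factor — is the crux; once the identity is in place, invoking the spectral gap and summing is routine.
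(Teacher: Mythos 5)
Your overall strategy agrees with the paper's: for $|A| = m \ge \eps n$, the mean-zero property of $P_A 1_S$ in each of the $m$ active coordinates should yield a factor like $\rho^m$ (or $\rho^{m/2}$) from the spectral gap, and since $\rho$ depends only on $\eps$ this beats the crude $2^n$ count of sets $A$. You are also right that $P_A 1_S$ factors through $X^A$, so the analysis reduces to $\sL^m$, and you correctly identify that the mean-zero conditions are what bring $\Adj - \cU$ rather than $\Adj$ into play.

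The genuine gap is the one you flag as the ``main obstacle,'' and the route you sketch toward it does not work as stated. You propose to unfold $\Lambda$ ``one coordinate at a time as a composition of $n$ single-coordinate operators,'' but $\Lambda$ is a \emph{trilinear} form while $\Adj$ is a bilinear, self-adjoint operator on $L^2(X\times Y)$: there is no transfer-matrix expansion of a trilinear form coordinate-by-coordinate, and no way to charge a single-coordinate factor of $\rho$ directly. One must first collapse $\Lambda$ to a quadratic form before spectral theory applies. The paper does this with a single Cauchy--Schwarz in the symbol variable over $\sL^m$: for $f \in L^2(X)_0^{\otimes m}$, $g \in L^2(Y)_0^{\otimes m}$, $h \in L^2(Z)_0^{\otimes m}$,
\[
  |\Lambda(f,g,h)| \le \bigl\langle \Adj^{\otimes m}(f\otimes \bar g),\, f\otimes \bar g \bigr\rangle^{1/2}\,\|h\|_2 ,
\]
and then, since $f\otimes \bar g \in L^2(X\times Y)_0^{\otimes m}$ and $\Adj^{\otimes m}$ restricted to that subspace has spectral radius $\rho^m$, this is at most $\rho^{m/2}\|f\|_2\|g\|_2\|h\|_2$. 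That Cauchy--Schwarz step is precisely the missing identity; with it in hand (this is \Cref{lem:quasi-use}), the bound $\|P_A 1_S\|_2 \le \|1_S\|_2$ handles the norms with no extra $C^n$ loss, so the bookkeeping you were worried about is actually clean, and summing over the at most $2^n$ sets $A$ is routine exactly as you say.
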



\subsection{Quasirandomness}%
\label{sub:outline-quasi}

It remains (for \Cref{thm:main-random,thm:quasirandom-groups-are-quasirandom}) to demonstrate that the latin squares in scope are quasirandom in this sense.
If $\sL$ is the multiplication table of a group $G$ we compute the entire spectrum of $\Adj$ and find $\rho = 1/D$ where $D$ is the minimal dimension of a nontrivial representation of $G$,
which shows that our notion of quasirandomness is equivalent to the usual one due to Gowers~\cite{gowers} in the case of groups.
For genuinely random latin squares we use recent work of Kwan, Sah, Sawhney, and Simkin~\cite{KSSS} to show that $\tr \Adj^6 = 1 + o(1)$ with high probability,
and this implies that $\rho = o(1)$.

\subsection{Proof of \Cref{thm:main-quasirandom}}%
\label{subsec:main-theorem}

Putting \Cref{thm:major-arcs,thm:sparse-minor,thm:dense-minor} together it is straightforward to deduce \Cref{thm:main-quasirandom}.

\begin{proof}[Proof of \Cref{thm:main-quasirandom}]%
  Let $C$ and $\eps$ be as in~\eqref{eq:C-eps} and $M := C(n / \log n)^{1/2}$. Theorems~\ref{thm:major-arcs},~\ref{thm:sparse-minor} and~\ref{thm:dense-minor} give us that for some $c>0$
  \begin{align*}
      \Lambda(1_S, 1_S, 1_S)
      & = \bigl(e^{-1/2} + O(M^2/n)\bigr) \, \pfrac{n!}{n^{n}}^3 \\
      & \qquad + \sum_{M < m \leq \eps n} O(e^{-c m}) \, \pfrac{n!}{n^n}^3 \\
      & \qquad + 10^{-n} \pfrac{n!}{n^n}^3,
  \end{align*}
  as long as $\sL$ is $\Adj$-quasirandom with parameter $\rho$ for small enough $\rho$ (depending on $\eps$).
  The choice of $M$ implies~\eqref{eq:main-Lambda-statement} and hence \Cref{thm:main-quasirandom}.
\end{proof}

\subsection{Layout of the paper}%
\label{sub:layout}

To prove \Cref{thm:major-arcs,thm:sparse-minor,thm:dense-minor} we need some background material on partition systems (\Cref{sec:partitions}) and on the primitive ``Fourier analysis'' of coordinate projections $Q_A$, $P_A$ discussed above (\Cref{sec:fourier}).
This builds on similar material from~\cite{EMM}.

Then, \Cref{sec:major-arcs,sec:true-sparse-minor,sec:dense-minor} give the proofs of the three key theorems above.
Finally, \Cref{sec:quasirandomness} proves the quasirandomness properties from \Cref{sub:outline-quasi}.

\section{Partitions and partition systems}%
\label{sec:partitions}

\subsection{Partitions}%
\label{subsec:partitions}

Most of our language relating to the partition lattice is standard.
\begin{enumerate}
    \item If $A$ is a set, $\Pi_A$ is the set of all partitions of $A$. If $A = [m]$ we will conserve brackets by writing simply $\Pi_m$.
    \item $\Pi_A^{(k)}$ is the set of partitions all of whose cells have size at most $k$.
    \item If $A \subset B$ then any partition of $A$ is identified with a partition of $B$ by adding singletons $\bigl\{\{b\} \colon b \in B \setminus A \bigr\}$. With this convention, $\Pi_A \subset \Pi_B$.
    \item The \emph{support} $\supp \pi$ of a partition $\pi \in \Pi_A$ is the union of the nonsingleton cells of $\pi$. It is the smallest set $B \subset A$ such that $\pi\in \Pi_B$.
    \item $\Pi'_A$ is the set of $\pi\in \Pi_A$ with $\supp \pi = A$.
    \item If $\pi, \pi' \in \Pi_A$, $\pi \leq \pi'$ means that $\pi$ is a \emph{refinement} of $\pi'$ (i.e., every cell of $\pi'$ is a union of cells of $\pi$).  Synonymously, $\pi'$ is a \emph{coarsening} of $\pi$.
    \item The \emph{meet} $\pi \wedge \pi'$ is the coarsest partition refining both $\pi$ and $\pi'$; the \emph{join} $\pi \vee \pi'$ is the finest partition coarsening both $\pi$ and $\pi'$.
    \item The partition $\{\{a\} : a \in A\}$ is the \emph{discrete partition}; the partition $\{A\}$ is the \emph{indiscrete partition}.
    \item The \emph{rank} of a partition $\pi \in \Pi_A$ is $\rank (\pi) = |A|-|\pi|$; equivalently it is the greatest $r$ such that there are partitions $\pi_0 < \pi_1 < \cdots < \pi_r = \pi$. (Note that $\rank (\pi)$ is meaningful without specifying $A$, unlike $|\pi|$; i.e., it is invariant under adding or removing singletons.)
    \item The \emph{M\"obius function} $\mu$ at $\pi \in \Pi_A$ is given by $\mu(\pi) = (-1)^{\rank (\pi)} \prod_{p \in \pi} (|p| - 1)!$. 
    \item A function $f : A \to X$ is \emph{$\pi$-measurable} if $f$ is constant on the cells of $\pi$.
      A subset $S \subset A$ is called $\pi$-measurable if $1_S$ is $\pi$-measurable.
    \item If $\pi \in \Pi_A$, $c_\pi \in L^2(X^A)$ is the indicator of $\pi$-measurability (i.e., $c_\pi(f)$ is $1$ if $f$ is $\pi$-measurable, and $0$ otherwise).
\end{enumerate}

The \emph{exponential formula} for partitions states
\begin{equation}
    \label{eq:exponential-formula}
    \sum_{m \geq 0} \frac{1}{m!} \sum_{\pi \in \Pi_m} \prod_{p \in \pi} x_{|p|}
    = \exp \br{\sum_{k \geq 1} \frac{1}{k!} x_k}.
\end{equation}
Here $x_1, x_2, \dots$ are formal variables.
We will apply~\eqref{eq:exponential-formula} several times in \Cref{sec:true-sparse-minor}.

\subsection{Partition systems}

In \Cref{sec:major-arcs,sec:true-sparse-minor} it will be essential to have good bounds on the quantity $\Lambda(c_{\pi_1},c_{\pi_2},c_{\pi_3})$ for $A \subset [n]$ and various choices $\pi_1,\pi_2,\pi_3 \in \Pi_A$.
This motivates the following definitions.
\begin{enumerate}
    \item A \emph{partition triple} on a set $A$ is a triple $\psystem = (\pi_1, \pi_2, \pi_3) \in \Pi_A^3$.
    \item We call $\psystem$ a \emph{partition system} if $\supp \pi_1 = \supp \pi_2 = \supp \pi_3$.
    \item The \emph{support} of $\psystem$ is $\supp\psystem = \supp \pi_1 \cup \supp \pi_2 \cup \supp \pi_3$.
\end{enumerate}

\begin{definition}[Combinatorial rank]%
    \label{def:crank}
    Let $\psystem = (\pi_1, \pi_2, \pi_3) \in \Pi_A^3$ be a partition triple.
    We write $S \subset \psystem$ to mean that $S \subset \pi_1 \sqcup \pi_2 \sqcup \pi_3$, i.e., $S$ is a collection of cells labelled 1, 2, or 3.
    A subset $S \subset \psystem$ is \emph{closed}
    (with respect to $\psystem$)
    if whenever $p_i \in \pi_i$ for $i = 1, 2, 3$ and $p_1 \cap p_2 \cap p_3 \neq \emptyset$, if two of $p_1, p_2, p_3$ are in $S$ then so is the third.
    The \emph{closure} $\langle S\rangle$ of $S$ is the intersection of all closed sets containing $S$.
    The \emph{combinatorial rank} of $\psystem = (\pi_1, \pi_2, \pi_3)$ is defined as
    \[
        \crank(\psystem) = 2 |A| - \min \left\{ |S| : S \subset \psystem, \langle S \rangle = \psystem \right\}.
    \]
\end{definition}

The motivation for combinatorial rank is the following bound.
\begin{lemma}%
    \label{lem:crank-bound}
    For a set $A$, partitions $\pi_1, \pi_2, \pi_3 \in \Pi_A$, and latin square $\sL \subset X \times Y \times Z$,
    \[
        0
        \le
        \Lambda(c_{\pi_1}, c_{\pi_2}, c_{\pi_3})
        \le
        n^{-\crank(\pi_1,\pi_2,\pi_3)}.
    \]
\end{lemma}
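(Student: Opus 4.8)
The plan is to interpret $\Lambda(c_{\pi_1}, c_{\pi_2}, c_{\pi_3})$ as a (normalized) count of colourings of the cells of $\psystem = (\pi_1, \pi_2, \pi_3)$ that are compatible with $\sL$, and then to bound that count by propagating values out from a minimum generating set, mirroring exactly the closure operation in \Cref{def:crank}.

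First I would unwind the definitions. Because $c_{\pi_j}$ depends only on the coordinates indexed by $A$, one may replace $\sL^n$ by $\sL^A$ without changing the value of $\Lambda(c_{\pi_1}, c_{\pi_2}, c_{\pi_3})$; since $|\sL^A| = n^{2|A|}$, this gives
\[
    \Lambda(c_{\pi_1}, c_{\pi_2}, c_{\pi_3}) = \frac{N}{n^{2|A|}},
\]
where $N$ is the number of triples $(x,y,z) \in \sL^A$ such that $x$ is $\pi_1$-measurable, $y$ is $\pi_2$-measurable, and $z$ is $\pi_3$-measurable. Non-negativity of $\Lambda(c_{\pi_1}, c_{\pi_2}, c_{\pi_3})$ is then immediate. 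Spelling out the measurability conditions, such a triple is precisely the data of a value in $X$ attached to each cell of $\pi_1$, a value in $Y$ attached to each cell of $\pi_2$, and a value in $Z$ attached to each cell of $\pi_3$, subject to the requirement that whenever cells $p_1 \in \pi_1$, $p_2 \in \pi_2$, $p_3 \in \pi_3$ satisfy $p_1 \cap p_2 \cap p_3 \ne \emptyset$, the three attached values form a triple of $\sL$. So $N$ counts such ``$\sL$-compatible colourings'' of the cells of $\psystem$.

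Next I would show that $N \le n^{|S|}$ for any $S \subset \psystem$ with $\langle S \rangle = \psystem$. Fix a colouring of the cells in $S$ — at most $n^{|S|}$ possibilities — and observe that this forces the colour of every remaining cell (or is outright inconsistent): if $p_1 \cap p_2 \cap p_3 \ne \emptyset$ and two of $p_1, p_2, p_3$ are already coloured, the latin square property supplies a unique admissible colour for the third (e.g.\ given the colours $x_0$ of $p_1$ and $y_0$ of $p_2$, there is a unique $z_0 \in Z$ with $(x_0, y_0, z_0) \in \sL$). The set of cells whose colour is thereby forced is closed under the operation of \Cref{def:crank}, hence contains $\langle S \rangle = \psystem$, so every cell receives a forced colour and the extension, when it exists, is unique. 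Taking $S$ to be a minimum generating set, for which $|S| = 2|A| - \crank(\psystem)$, we conclude
\[
    \Lambda(c_{\pi_1}, c_{\pi_2}, c_{\pi_3}) = \frac{N}{n^{2|A|}} \le n^{|S| - 2|A|} = n^{-\crank(\psystem)}.
\]

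The only step needing genuine care is the bookkeeping in the propagation argument: one must verify that the combinatorial closure of \Cref{def:crank} lines up exactly with ``the colour of this cell is now forced'', in particular that a cell entering the closure confers no freedom, only a constraint. This is routine once set up, and I do not expect any real obstacle; everything else is just unwinding definitions. (It is also worth checking — though not logically necessary — that both $\Lambda(c_{\pi_1}, c_{\pi_2}, c_{\pi_3})$ and $\crank$ are invariant under adjoining or deleting singleton cells, consistently with the bound $n^{-\crank}$ making no reference to $A$.)
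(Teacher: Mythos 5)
Your proposal is correct and follows essentially the same approach as the paper: both identify $\Lambda(c_{\pi_1},c_{\pi_2},c_{\pi_3})$ with $n^{-2|A|}$ times the number of $\sL$-compatible assignments of symbols to the cells of $\psystem$, and both bound that count by $n^{|S|}$ for a minimum generating set $S$ using the latin square property. The only cosmetic difference is that the paper establishes the bound via a uniqueness argument (two assignments agreeing on $S$ must agree on the closed set of cells where they coincide, hence everywhere), whereas you phrase it as forward propagation of forced colours from $S$; these are equivalent, and your caveat that a forced assignment may turn out inconsistent is handled correctly since one only needs an upper bound.
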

The idea of the proof is the same as for the related result~\cite[Lemma~4.6]{EMM}.
\begin{proof}
    The $\Lambda$ value is, by definition, $n^{-2|A|}$ times the number of triples of functions
    \[
        f_1 \colon A \to X, \qquad
        f_2 \colon A \to Y, \qquad
        f_3 \colon A \to Z
    \]
    such that $f_i$ is $\pi_i$-measurable for $i = 1, 2, 3$ and such that $(f_1(a), f_2(a), f_3(a)) \in \sL$ for all $a \in A$.
    Note we can think of $f_i$ as a function on the cells of $\pi_i$, since it is $\pi_i$-measurable.

    We claim that, given $S \subset \psystem$ with $\langle S \rangle = \psystem$, the triple $(f_1,f_2,f_3)$ is determined by the values of $f_i$ on cells in $S$.  Hence the number of such triples is at most $n^{|S|}$, giving the result.

    Indeed, suppose $f'_1, f'_2, f'_3$ is another triple of measurable functions with the same restriction to $S$.
    Let $W \subseteq \psystem$ be the set of all cells $p_i \in \pi_i$ such that $f_i|_{p_i} = f'_i|_{p_i}$.
    By hypothesis $W \supset S$.
    If $p_i \in \pi_i$ for $i=1,2,3$, $a \in p_1 \cap p_2 \cap p_3$, and two of $p_1,p_2,p_3$ are in $W$, then so is the third, as the triples $(f_1(a), f_2(a), f_3(a)), (f'_1(a), f'_2(a), f'_3(a)) \in \sL$ agree at two coordinates and so are equal by the latin square property.
    Hence $W$ is a closed set, so $W \supset \langle S \rangle = \psystem$ and $f_i=f'_i$, as required.
\end{proof}

This reduces the problem of bounding
$\Lambda(c_{\pi_1}, c_{\pi_2}, c_{\pi_3})$ from above to the problem of bounding $\crank(\pi_1,\pi_2,\pi_3)$ from below.
In~\cite{EMM} we did this using two slightly weaker notions of rank, called \emph{triple rank} and \emph{lower rank}, defined respectively as
\begin{align*}
    \trank(\psystem) &= \max_{\sigma \in S_3}\bigl(
    \rank(\pi_{\sigma(1)}) + \rank(\pi_{\sigma(2)} \vee \pi_{\sigma(3)})
    \bigr) \\
    \lrank(\psystem) &= \bigl(\rank(\pi_1) + \rank(\pi_2) + \rank(\pi_3) + \rank(\pi_1 \vee \pi_2 \vee \pi_3)\bigr)/2.
\end{align*}

\begin{lemma}%
    \label{lem:crank>=trank}
    $\crank(\psystem) \geq \trank(\psystem) \geq \lrank(\psystem)$.
\end{lemma}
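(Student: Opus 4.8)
The inequality $\trank(\psystem) \geq \lrank(\psystem)$ is purely a statement about the partition lattice: for any $\sigma \in S_3$ we have
\[
    2\,\trank(\psystem) \geq \bigl(\rank(\pi_{\sigma(1)}) + \rank(\pi_{\sigma(2)} \vee \pi_{\sigma(3)})\bigr) + \bigl(\rank(\pi_{\sigma(2)}) + \rank(\pi_{\sigma(1)} \vee \pi_{\sigma(3)})\bigr),
\]
and since $\rank(\pi_{\sigma(2)} \vee \pi_{\sigma(3)}) \geq \rank(\pi_{\sigma(2)} \vee \pi_{\sigma(1)} \vee \pi_{\sigma(3)})$ and $\rank(\pi_{\sigma(1)} \vee \pi_{\sigma(3)}) \geq \rank(\pi_{\sigma(1)})$ (rank is monotone under coarsening), adding these gives $2\,\trank(\psystem) \geq 2\,\lrank(\psystem)$, so I would get this half essentially for free. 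Alternatively one averages over the three choices of $\sigma$ (fixing which index is isolated) to land exactly on $\lrank$; either way it is a one-line lattice computation.

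The substantive half is $\crank(\psystem) \geq \trank(\psystem)$. Fix $\sigma \in S_3$; by relabelling assume it is the identity, so I must produce a generating set $S \subset \psystem$ with $\langle S \rangle = \psystem$ and
\[
    |S| \leq 2|A| - \rank(\pi_1) - \rank(\pi_2 \vee \pi_3) = |A| + |\pi_1| + |A| - \rank(\pi_2 \vee \pi_3).
\]
The plan is to build $S$ in two stages. First put \emph{all} of $\pi_1$ into $S$ (that is $|\pi_1|$ cells). Next I want to add a subset of $\pi_2 \sqcup \pi_3$ of size at most $|A| - \rank(\pi_2 \vee \pi_3) = |A| - |A| + |\pi_2 \vee \pi_3|$... wait, more carefully: I want the total added from levels $2$ and $3$ to be $2|A| - \rank(\pi_1) - \rank(\pi_2\vee\pi_3) - |\pi_1|$. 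Using $\rank(\pi_1) = |A| - |\pi_1|$ this target is $|A| - \rank(\pi_2 \vee \pi_3) = |\pi_2 \vee \pi_3| + \bigl(|A| - \rank(\pi_2\vee\pi_3) - |\pi_2\vee\pi_3|\bigr)$; since $\rank(\pi_2 \vee \pi_3) = |A| - |\pi_2 \vee \pi_3|$ this simplifies to exactly $|\pi_2 \vee \pi_3| + \bigl(\rank(\pi_2) + \rank(\pi_3) - \rank(\pi_2 \vee \pi_3)\bigr)$ after expanding $|A|$... the clean way to phrase the second stage: choose a spanning structure for the join $\pi_2 \vee \pi_3$. Concretely, for each cell $q$ of $\pi_2 \vee \pi_3$, the cells of $\pi_2$ and $\pi_3$ contained in $q$ form (via the bipartite incidence "cell of $\pi_2$ meets cell of $\pi_3$") a connected bipartite graph; pick a spanning tree, which uses $(\text{number of }\pi_2\text{-cells in }q) + (\text{number of }\pi_3\text{-cells in }q) - 1$ edges, and let $S$ also contain, say, all $\pi_2$-cells plus, for each edge of each spanning tree, the corresponding $\pi_3$-cell... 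I need to recount so the arithmetic matches $\trank$; the point is that a spanning tree lets me propagate equality (using that $\pi_1$ is already fully in $S$ as "glue" via the closure rule $p_1 \cap p_2 \cap p_3 \neq \emptyset$) across all of $\pi_2$ and $\pi_3$ within each join-block, and the edge count of a forest is (vertices) $-$ (components) $= (|\pi_2| + |\pi_3|) - |\pi_2 \vee \pi_3|$.

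So the construction is: $S = \pi_1 \cup \pi_2 \cup \{\text{one }\pi_3\text{-cell per spanning-tree edge}\}$, of size $|\pi_1| + |\pi_2| + \bigl((|\pi_2| + |\pi_3|) - |\pi_2 \vee \pi_3|\bigr)$ — hmm, that double-counts; rather $S = \pi_1 \cup \pi_2 \cup T$ where $T \subset \pi_3$ is chosen so that within each join-block the chosen $\pi_3$-cells together with $\pi_2$-cells and the incidence relation form a connected subgraph, needing $|\pi_3| - (\text{number of join-blocks not already connected by }\pi_2\text{ alone})$... The correct bookkeeping, which I would carry out carefully, yields $|S| = |\pi_1| + |\pi_2| + |\pi_3| - |\pi_2 \vee \pi_3|$; then using $|\pi_i| = |A| - \rank(\pi_i)$ and $|\pi_2 \vee \pi_3| = |A| - \rank(\pi_2 \vee \pi_3)$ this equals $2|A| - \rank(\pi_1) - \bigl(\rank(\pi_2) + \rank(\pi_3) - \rank(\pi_2 \vee \pi_3)\bigr) \leq 2|A| - \rank(\pi_1) - \rank(\pi_2 \vee \pi_3)$ since $\rank(\pi_2) + \rank(\pi_3) \geq \rank(\pi_2 \vee \pi_3)$ would go the wrong way — so in fact I should instead \emph{only} take a spanning forest's worth of $\pi_2$- and $\pi_3$-cells rather than all of them, starting from $\pi_1$. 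Let me restate: take $S_0 = \pi_1$; then for the bipartite incidence graph on $\pi_2 \sqcup \pi_3$ (edges = nonempty triple intersections with a $\pi_1$-cell, which are automatically "activated" since all of $\pi_1$ is in), within each connected component (= join-block of $\pi_2 \vee \pi_3$) pick a spanning tree and let $S$ consist of $\pi_1$ together with the vertex set of that tree — but every vertex is in some tree, so that's all of $\pi_2 \sqcup \pi_3$ again. The actual mechanism: closure says if $p_1, p_2 \in \langle S\rangle$ and they meet a common $p_3$ then $p_3 \in \langle S \rangle$; so with $\pi_1 \subset S$, once one $\pi_2$- or $\pi_3$-cell of a join-block is in $S$, I can walk the spanning tree and pull in every other cell of that block. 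Hence it suffices to put into $S$, besides $\pi_1$, just \emph{one} cell per join-block of $\pi_2 \vee \pi_3$ — wait, but walking an edge of the bipartite tree from a $\pi_2$-cell requires knowing the shared $\pi_1$-cell, which I have, and a $\pi_3$-cell on the other end, which I then deduce, and vice versa, so yes one seed per block suffices, giving $|S| = |\pi_1| + |\pi_2 \vee \pi_3|$. But that's $2|A| - \rank(\pi_1) - \rank(\pi_2\vee\pi_3)$ — exactly $\trank$ for this $\sigma$! Good. Taking the best $\sigma$ gives $\min|S| \leq 2|A| - \trank(\psystem)$, i.e. $\crank(\psystem) \geq \trank(\psystem)$.

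I expect the main obstacle to be the careful verification that the seeded, spanning-tree-guided closure genuinely recovers all of $\pi_2$ and $\pi_3$ — in particular that the bipartite incidence graph "$\pi_2$-cell $\sim$ $\pi_3$-cell when they share a point (equivalently, their intersection is nonempty)" has connected components precisely the cells of $\pi_2 \vee \pi_3$, and that every such incidence is witnessed by some $\pi_1$-cell (it is: the shared point lies in a unique $\pi_1$-cell, since $\pi_1 \in \Pi_A$ partitions $A$), so that the closure rule applies with all three of $p_1, p_2, p_3$ in play. Once that combinatorial claim is nailed down the rank arithmetic is routine, and the $\trank \geq \lrank$ step is immediate as above. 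I would present $\crank \geq \trank$ first (the real content) and then dispatch $\trank \geq \lrank$ in a sentence.
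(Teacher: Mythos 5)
Your proof of the main inequality $\crank(\psystem) \geq \trank(\psystem)$, after a good deal of backtracking, arrives at exactly the paper's construction: take $S = \pi_1 \cup \{$one cell per join-block of $\pi_2 \vee \pi_3\}$, observe $|S| = |\pi_1| + |\pi_2 \vee \pi_3| = 2|A| - \rank(\pi_1) - \rank(\pi_2\vee\pi_3)$, and check $\langle S\rangle = \psystem$ by propagating along the bipartite incidence graph inside each join-block (with $\pi_1$ fully included acting as the witness for each closure step). That matches the paper, and the verification that one seed per join-block suffices is correct.

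Your argument for $\trank(\psystem) \geq \lrank(\psystem)$, however, is wrong as written. You invoke ``rank is monotone under coarsening'' to assert $\rank(\pi_{\sigma(2)} \vee \pi_{\sigma(3)}) \geq \rank(\pi_{\sigma(1)} \vee \pi_{\sigma(2)} \vee \pi_{\sigma(3)})$, but the inequality is reversed: $\pi_2 \vee \pi_3$ \emph{refines} $\pi_1 \vee \pi_2 \vee \pi_3$, and in the paper's convention rank increases as one coarsens, so $\rank(\pi_2 \vee \pi_3) \leq \rank(\pi_1 \vee \pi_2 \vee \pi_3)$. Monotonicity alone cannot give the needed inequality $\rank(\pi_2 \vee \pi_3) + \rank(\pi_1 \vee \pi_3) \geq \rank(\pi_3) + \rank(\pi_1 \vee \pi_2 \vee \pi_3)$; this is a genuine semimodularity statement (apply submodularity of the partition-lattice rank function with $x = \pi_2 \vee \pi_3$, $y = \pi_1 \vee \pi_3$, noting $x \vee y = \pi_1 \vee \pi_2 \vee \pi_3$ and $x \wedge y \geq \pi_3$). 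Your alternative ``averaging over $\sigma$'' sketch runs into the same issue and is not the advertised one-liner. The paper sidesteps this entirely by citing \cite[Lemma~4.8]{EMM} and explicitly noting the inequality is not used here, so the gap does not affect the applications, but you should not present the monotonicity argument as correct.
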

\begin{proof}
    For the first inequality, let $S \subset \psystem$ contain all of $\pi_1$ and one cell of $\pi_2$ from each cell of $\pi_2 \vee \pi_3$. Then $|S| = |\pi_1| + |\pi_2 \vee \pi_3|$ and $\langle S \rangle = \psystem$, so $\crank(\psystem) \geq \rank(\pi_1) + \rank(\pi_2 \vee \pi_3)$, and equally for other permutations of 1, 2, 3.
    The second inequality was proved in~\cite[Lemma~4.8]{EMM}, and in any case will not be used in this paper.
\end{proof}

For continuity with~\cite{EMM}, we define the \emph{complexity} of a partition system $\psystem$ to be
\[
    \cx(\psystem) = \trank(\psystem) - |\supp \psystem|.
\]
The complexity of a partition system is nonnegative, and it is zero if and only if $\psystem = (\pi, \pi, \pi)$ for some \emph{matching} $\pi$, i.e., a partition of $A = \supp \pi$ into $|A|/2$ pairs.

\subsection{Combinatorial rank of matching systems}

In this subsection we compute $\crank(\pi_1, \pi_2, \pi_3)$ for all $(\pi_1, \pi_2, \pi_3) \in \Pi^{(2)}_A$, i.e., partition triples such that all cells of $\pi_1, \pi_2, \pi_3$ have size at most 2.
Where it applies, this is a significant improvement on what \Cref{lem:crank>=trank} gives us.

\begin{lemma}%
    \label{claim:cool-crank-value}
    Let $\pi_1, \pi_2, \pi_3 \in \Pi_A^{(2)}$.
    Suppose there are precisely $k$ cells $p \in \pi_1 \vee \pi_2 \vee \pi_3$
    such that $\pi_i |_p$ has full support (i.e., is a matching) for each $i \in [3]$.
    Then
    \[
        \crank(\pi_1,\pi_2,\pi_3) = \rank(\pi_1) + \rank(\pi_2) + \rank(\pi_3) - k.
    \]
\end{lemma}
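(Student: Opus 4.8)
The plan is to unwind the definition of combinatorial rank directly. Since $\rank(\pi_i)=|A|-|\pi_i|$ and $\crank(\psystem)=2|A|-\min\{|S|:S\subset\psystem,\ \langle S\rangle=\psystem\}$, the asserted identity is equivalent to
\[
  \min\{\,|S| : S\subset\psystem,\ \langle S\rangle=\psystem\,\}\;=\;|\pi_1|+|\pi_2|+|\pi_3|-|A|+k .
\]
To analyse this I would translate the data into a graph. Build a multigraph $\Gamma$ on vertex set $A$: for every $2$-element cell $\{a,b\}$ of some $\pi_i$ put an edge of colour $i$ between $a$ and $b$, and for every singleton cell $\{a\}$ of some $\pi_i$ attach a \emph{stub} of colour $i$ at $a$. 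The cells of $\psystem$, i.e.\ the elements of $\pi_1\sqcup\pi_2\sqcup\pi_3$, are then exactly the edges and stubs of $\Gamma$; every vertex $a$ has exactly three incident slots, one of each colour, forming the triple $T_a$; and the closure relation says precisely that if two of the three slots at some vertex lie in $S$, so does the third. One checks that the connected components of $\Gamma$ are the cells of $\pi_1\vee\pi_2\vee\pi_3$, and that a component $p$ contains no stub iff $\pi_i|_p$ is a matching for every $i$; hence $k$ is exactly the number of stubless components, and (since each $T_a$ lives inside a single component) the quantity $\min|S|$ splits as a sum over components.

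For the upper bound I would exhibit a generating set of the claimed size. Choose a spanning forest $F$ of $\Gamma$ (a spanning tree in each component), and in each component that is not stubless root its tree at a vertex carrying a stub and single out one such stub. Let $S$ consist of all cells other than the edges of $F$ and the singled-out stubs; a count gives $|S|=(\text{number of cells})-(|A|-\#\text{components})-(\#\text{components}-k)=|\pi_1|+|\pi_2|+|\pi_3|-|A|+k$. To see $\langle S\rangle=\psystem$, process the vertices of each rooted tree deepest-first: at a non-root vertex the two slots other than the edge to its parent are already active (each is a non-$F$ edge, a stub in $S$, or an $F$-edge to a child handled earlier), so the parent edge becomes active; at the root the two slots other than the singled-out stub are active for the same reason, so that stub becomes active too.

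For the lower bound, take any $S$ with $\langle S\rangle=\psystem$ and record the cells $c^{(1)},\dots,c^{(t)}$ activated during the closure process, in order, where $c^{(i)}$ is activated at a vertex $a_i$ whose other two slots were already active; thus $t=|\pi_1|+|\pi_2|+|\pi_3|-|S|$. Two remarks: the $a_i$ are distinct, because once all three slots at a vertex are active no further activation happens there; and among $c^{(i)},\dots,c^{(t)}$ the only slot at $a_i$ is $c^{(i)}$ itself, since the other two slots at $a_i$ lie in $S\cup\{c^{(1)},\dots,c^{(i-1)}\}$. Consequently, in the $\FF_2$ vertex--slot incidence matrix $B$ of $\Gamma$ (rows indexed by $A$, columns by cells, $B_{a,c}=1$ iff $a\in c$), the $t\times t$ submatrix on rows $a_1,\dots,a_t$ and columns $c^{(1)},\dots,c^{(t)}$ is lower triangular with $1$'s on the diagonal; hence these $t$ columns are linearly independent and $t\le\rank_{\FF_2}B$. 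Finally I would compute $\rank_{\FF_2}B=|A|-k$: its left kernel consists of the vectors in $\FF_2^A$ that are constant on each component of $\Gamma$ and vanish at every vertex carrying a stub, a space of dimension equal to the number of stubless components. Therefore $t\le|A|-k$, so $|S|\ge|\pi_1|+|\pi_2|+|\pi_3|-|A|+k$, matching the upper bound.

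The delicate point in both directions is the correction term $+k$. Naively the spanning-forest construction spends a full spanning tree per component, giving only $\min|S|\le|\pi_1|+|\pi_2|+|\pi_3|-|A|+\#\text{components}$, while distinctness of the $a_i$ gives only $\min|S|\ge|\pi_1|+|\pi_2|+|\pi_3|-|A|$; each estimate must be sharpened by exactly the number of stubless components---on the construction side by deleting one stub from each non-stubless component, and on the lower-bound side by the rank computation for $B$, equivalently by the observation that the relations imposed by the triangles $T_a$ are independent except for one dependency per stubless component (the triangle relations over a stubless, hence $3$-regular, component sum to zero because each edge is counted twice over $\FF_2$). I expect this bookkeeping around stubs versus edges to be where essentially all of the care goes; the rest is a routine dictionary between the partition-lattice language and the graph model.
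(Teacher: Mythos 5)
Your proof is correct. You use the same graph-theoretic translation as the paper: vertices are $A$, each $2$-cell becomes an edge, each singleton becomes a stub, and the closure rule becomes the rule that two active slots at a vertex activate the third. Your upper bound (a spanning forest plus one singled-out stub per component carrying a stub, verified by deepest-first activation) is essentially the paper's construction, which instead attaches a dummy vertex $*$ to absorb the stubs. Where you genuinely diverge is the lower bound. The paper argues that $\langle S \rangle = \psystem$ forces the complement of $S$ to be acyclic (a cycle lying entirely outside $S$ can never become infected), so $|\psystem \setminus S|$ is at most the size of a spanning forest of the auxiliary graph. You instead record the cells activated during the closure process in order, observe that they index a lower-triangular invertible submatrix of the $\FF_2$ vertex--cell incidence matrix $B$, hence that their number is at most $\rank_{\FF_2} B$, and then compute $\rank_{\FF_2} B = |A| - k$ by identifying the left kernel. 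The two arguments are equivalent via the matroid that $B$ represents, but your linear-algebraic route is self-contained, avoids the paper's reduction to indiscrete $\pi_1 \vee \pi_2 \vee \pi_3$ and the separate $k\in\{0,1\}$ casework, and makes very explicit why the correction is exactly $k$: there is exactly one $\FF_2$-dependency among the rows of $B$ per stubless (hence $3$-regular) component. Both proofs are short; the paper's is arguably more elementary, yours more mechanical and perhaps easier to generalize.
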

\begin{proof}
    Since all terms are additive across cells of $\pi_1 \vee \pi_2 \vee \pi_3$,
    we may assume $\pi_1 \vee \pi_2 \vee \pi_3$ is indiscrete.
    In particular $k \in \{0, 1\}$, and $k = 0$ if and only if one of $\pi_1, \pi_2, \pi_3$ has a singleton.

    \emph{Case $k = 1$}:
    In this case $\pi_1, \pi_2, \pi_3$ are matchings, so
    \[
        \rank(\pi_1) = \rank(\pi_2) = \rank(\pi_3) = |A|/2,
    \]
    and we must show
    \[
        \crank(\pi_1, \pi_2, \pi_3) = 3|A|/2 - 1.
    \]
    Let $\cG$ be the multigraph whose vertex set is $A$ and with edges given by the cells of $\pi_1, \pi_2, \pi_3$ (which are all 2-cells).
    Clearly $\cG$ is $3$-regular, with $|A|$ vertices and $3|A|/2$ edges.
    Since $\pi_1 \vee \pi_2 \vee \pi_3$ is indiscrete, $\cG$ is connected.

    According to \Cref{def:crank}, we want to infect as few edges as possible in such a way that, if two infected edges incident at a vertex always spread infection to the third edge, then infection spreads to all edges.
    Note that for this to happen, it is necessary and sufficient to infect at least one edge in each cycle, since the edges that are uninfected at the end of the process form a subgraph with no vertex of degree $1$.
    Hence, equivalently, we want to delete as few edges as possible to get a forest.

    Since $\cG$ has $3|A|/2$ edges and any forest has at most $|A|-1$ edges, we must delete at least $|A|/2 + 1$ edges.
    Conversely, given any connected 3-regular multigraph, we can delete edges until we have a (simple) tree. Hence the minimal number of generators is precisely $|A|/2 + 1$, so $\crank(\pi_1, \pi_2, \pi_3) = 2|A| - (|A|/2+1) = 3|A|/2 - 1$, as claimed.

    \emph{Case $k = 0$}:
    In this case at least one of $\pi_1, \pi_2, \pi_3$ has a singleton, and we must show that
    \[
        \crank(\pi_1, \pi_2, \pi_3) = \rank(\pi_1) + \rank(\pi_2) + \rank(\pi_3).
    \]
    We define a graph $\cG$ as in the previous case but additionally for every singleton $\{v\} \in \pi_1 \sqcup \pi_2 \sqcup \pi_3$ we add an edge $\{v, *\}$, where $*$ is a special additional vertex at which infection does not spread.
    Since $\pi_1 \vee \pi_2 \vee \pi_3$ is indiscrete, $\cG \setminus *$ is connected.
    Since there is at least one singleton, $\cG$ is connected.
    Again we want to delete as few edges as possible to get a forest.
    The number of vertices in $\cG$ is $|A| + 1$ and the number of edges is $|\pi_1| + |\pi_2| + |\pi_3|$, so the number of edges we must delete is precisely
    $
        |\pi_1| + |\pi_2| + |\pi_3| - |A|.
    $
    Hence
    \[
        \crank(\pi_1, \pi_2, \pi_3) = 3|A| - |\pi_1| - |\pi_2| - |\pi_3| = \rank(\pi_1) + \rank(\pi_2) + \rank(\pi_3),
    \]
    as claimed.
\end{proof}

\section{The ``Fourier'' expansion of $1_S$}%
\label{sec:fourier}

Recall from \Cref{sec:outline} that $Q_A$ denotes the orthogonal projection $L^2(X^m) \to L^2(X^A)$ and $P_A$ denotes the orthogonal projection
\[
    P_A : L^2(X^m) \to L^2(X^A) \cap \bigcap_{B \subsetneq A} L^2(X^B)^\perp,
\]
and these operators are related via inclusion--exclusion rules:
\begin{align}
    Q_A &= \sum_{B \subset A} P_B, \nonumber\\
    P_A &= \sum_{B \subset A} (-1)^{|A \setminus B|} Q_B \label{eq:incl-excl}.
\end{align}
In this section we study the terms in the expansion
\[
    1_S = \sum_{A \subset [n]} P_A 1_S.
\]

To express some of the results it is convenient to use the linear map $U : \CC[z] \to \CC$ defined by
\[
    U(z^k) = \begin{cases}
    n^k / (n)_k &: k \leq n, \\
    0 &: k > n.
    \end{cases}
\]
Here $(n)_k = n(n-1) \cdots (n-k+1)$.

\subsection{Formulas for \texorpdfstring{$P_A1_S$}{P\_A1\_S}}

Let $S_A \subset X^A$ denote the set of injections $A \to X$.
Thus if $|A| = m$, $|S_A| = (n)_m$.

\begin{lemma}%
    \label{lem:Q_AS_B}
    If $A \subset [n]$ and $|A|=m$,
    \[
    Q_A 1_S
    = \frac{n!}{n^n} \frac{n^m}{(n)_m} 1_{S_A}
    .
    \]
\end{lemma}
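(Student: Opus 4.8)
The plan is to compute $Q_A 1_S$ directly from its definition as the orthogonal projection of $1_S \in L^2(X^n)$ onto the subspace $L^2(X^A)$ of functions depending only on the coordinates in $A$. Since $S \subset X^n$ is the set of bijections $[n] \to X$, which is invariant under the action of $\opr{Sym}(X)$ on $X$ and also symmetric in the $n$ coordinates, the projection $Q_A 1_S$ must be a constant multiple of the indicator of some $\opr{Sym}(X)$-invariant, symmetric subset of $X^A$. The natural candidate for the support is $S_A$, the set of injections $A \to X$: a function $(x_a)_{a \in A}$ extends to a bijection $[n] \to X$ if and only if the $x_a$ are pairwise distinct. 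So I expect $Q_A 1_S = \lambda \cdot 1_{S_A}$ for some scalar $\lambda$.

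First I would recall that for any $f \in L^2(X^n)$, the projection $Q_A f$ is obtained by averaging out the coordinates outside $A$: concretely, $(Q_A f)(x) = \EE_{x_i : i \notin A} f(x)$, since $L^2(X^A)$ consists exactly of functions constant in the coordinates $i \notin A$, and conditional expectation is the orthogonal projection onto this subspace. Applying this to $f = 1_S$, for a fixed tuple $(x_a)_{a \in A} \in X^A$ we get
\[
    (Q_A 1_S)(x) = \PP_{x_i : i \notin A}\bigl[(x_1, \dots, x_n) \in S\bigr].
\]
If the $x_a$ ($a \in A$) are not pairwise distinct this probability is $0$; if they are pairwise distinct — i.e. $(x_a)_{a\in A} \in S_A$ — then we are asking for the probability that a uniformly random completion of a partial injection on $m = |A|$ points to a function on all of $[n]$ is a bijection. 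The number of such completions that are bijections is $(n-m)!$ (bijections from the remaining $n - m$ indices to the remaining $n - m$ unused symbols), out of $n^{n-m}$ total completions, giving $\lambda = (n-m)!/n^{n-m}$.

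Finally I would simplify: $(n-m)!/n^{n-m} = \tfrac{n!}{n^n} \cdot \tfrac{n^m}{(n)_m}$, since $\tfrac{n!}{n^n} \cdot \tfrac{n^m}{(n)_m} = \tfrac{n!}{(n)_m} \cdot \tfrac{1}{n^{n-m}} = \tfrac{(n-m)!}{n^{n-m}}$ using $(n)_m = n!/(n-m)!$. This yields the claimed identity $Q_A 1_S = \tfrac{n!}{n^n}\tfrac{n^m}{(n)_m} 1_{S_A}$. I do not anticipate a genuine obstacle here; the only point requiring a little care is justifying that conditional expectation (averaging out the complementary coordinates) really is the orthogonal projection $Q_A$ onto $L^2(X^A)$ — this is standard but worth stating cleanly, as the same averaging description of $Q_A$ (and hence, via the inclusion–exclusion formula \eqref{eq:incl-excl}, of $P_A$) will presumably be used repeatedly in the subsequent computations of $P_A 1_S$.
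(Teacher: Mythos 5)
Your proposal is correct and follows essentially the same route as the paper: identify $Q_A$ with averaging over the coordinates outside $A$, observe that an injection $A \to X$ extends to a bijection $[n]\to X$ in $(n-m)!$ ways (and a non-injection in $0$ ways), and normalize by $n^{n-m}$. The paper states this in a single sentence; your version just spells out the conditional-expectation description of $Q_A$ more explicitly.
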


\begin{proof}
    A function $f \colon A \to X$ can be extended to a bijection $[n] \to X$ in $(n - m)!$ ways if $f$ is injective and $0$ ways otherwise, and by definition $Q_A 1_S(f)$ is the number of such extensions normalized by $1/n^{n-m}$. 
\end{proof}

\begin{lemma}[{\cite[Lemma~4.3]{EMM}}]%
  \label{mobius-inversion-1}
    \[
        1_{S_A} = \sum_{\pi\in \Pi_A} \mu(\pi) c_\pi.
    \]
\end{lemma}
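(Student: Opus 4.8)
The plan is to prove this by Möbius inversion over the partition lattice; since $\mu$ is (up to the sign conventions already built into its definition in item~(10) above) the Möbius function of $\Pi_A$, the identity is classical, but it is just as quick to give a self-contained argument via the exponential formula~\eqref{eq:exponential-formula}. The first step is to evaluate both sides at a function $f \colon A \to X$ and introduce the partition $\ker f \in \Pi_A$ whose cells are the nonempty fibres of $f$. A function is $\pi$-measurable exactly when each cell of $\pi$ lies inside a single fibre of $f$, i.e.\ when $\pi \leq \ker f$, so $c_\pi(f) = 1$ iff $\pi \leq \ker f$; and $f \in S_A$ (i.e.\ $f$ is injective) iff every fibre is a singleton, i.e.\ iff $\ker f$ is the discrete partition. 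Thus the lemma reduces to the assertion that, for every $\sigma \in \Pi_A$,
\[
    \sum_{\pi \leq \sigma} \mu(\pi)
    = \begin{cases} 1 & \sigma \text{ discrete,}\\ 0 & \text{otherwise.}\end{cases}
\]

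Next I would exploit multiplicativity over the cells of $\sigma$. Restriction $\pi \mapsto (\pi|_q)_{q \in \sigma}$ is a bijection from $\{\pi \in \Pi_A : \pi \leq \sigma\}$ onto $\prod_{q \in \sigma}\Pi_q$, and because $\rank$ is additive over cells while $\prod_{p}(|p|-1)!$ is multiplicative over them, $\mu(\pi) = \prod_{q \in \sigma}\mu(\pi|_q)$. Hence $\sum_{\pi \leq \sigma}\mu(\pi) = \prod_{q \in \sigma}\bigl(\sum_{\rho \in \Pi_q}\mu(\rho)\bigr)$, and since the inner sum depends only on $|q|$ it remains to show that $\sum_{\rho \in \Pi_m}\mu(\rho)$ is $1$ when $m = 1$ and $0$ when $m \geq 2$; granting this, the product over $q \in \sigma$ is $1$ precisely when every cell of $\sigma$ is a singleton and $0$ otherwise, as required.

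Finally I would compute $\sum_{\rho \in \Pi_m}\mu(\rho)$ by applying~\eqref{eq:exponential-formula} with $x_k = (-1)^{k-1}(k-1)!\,t^k$, where $t$ is a bookkeeping variable. Since $(-1)^{\rank\rho} = \prod_{p \in \rho}(-1)^{|p|-1}$, for $\rho \in \Pi_m$ one has $\prod_{p \in \rho} x_{|p|} = \mu(\rho)\,t^m$, so the left-hand side of~\eqref{eq:exponential-formula} becomes $\sum_{m \geq 0}\tfrac{1}{m!}\bigl(\sum_{\rho \in \Pi_m}\mu(\rho)\bigr)t^m$, while the right-hand side is $\exp\bigl(\sum_{k \geq 1}\tfrac{(-1)^{k-1}}{k}t^k\bigr) = \exp\bigl(\log(1+t)\bigr) = 1+t$. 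Comparing coefficients of $t^m$ gives $\sum_{\rho \in \Pi_m}\mu(\rho) = 1$ for $m \in \{0,1\}$ and $0$ for $m \geq 2$, which is exactly what is needed. I do not expect a genuine obstacle here: the only care required is with the sign conventions in the definition of $\mu$ and with the handling of singleton cells, so that the statement stays consistent with the convention $\Pi_A \subset \Pi_B$ (adding or removing singletons changes neither side).
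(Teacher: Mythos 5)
Your proof is correct. The paper itself does not prove this lemma—it cites it as \cite[Lemma~4.3]{EMM}—so there is no in-paper argument to compare against; your write-up is a clean, self-contained instance of the classical Möbius-inversion identity $\sum_{\pi \leq \sigma}\mu(\pi) = [\sigma\ \text{discrete}]$ for the partition lattice, with the evaluation of $\sum_{\rho\in\Pi_m}\mu(\rho)$ handled correctly via the exponential formula (the signs in $x_k = (-1)^{k-1}(k-1)!\,t^k$ match the paper's convention $\mu(\rho) = (-1)^{\rank\rho}\prod_{p}(|p|-1)!$, and the multiplicativity-over-cells reduction is standard and sound).
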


\begin{lemma}%
  \label{lem:P_A-on-1_S}
    If $A \subset [n]$ and $|A| = m$ then
    \[
        P_A 1_S = \frac{n!}{n^n} \frac{n^m}{(n)_m}
        \sum_{\pi\in \Pi'_A} \mu(\pi) P_A c_\pi.
    \]
\end{lemma}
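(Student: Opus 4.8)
The plan is to push the projection $P_A$ through the formula $Q_A 1_S = \tfrac{n!}{n^n}\tfrac{n^m}{(n)_m} 1_{S_A}$ of \Cref{lem:Q_AS_B}, expand $1_{S_A}$ by the Möbius inversion of \Cref{mobius-inversion-1}, and then discard the partitions whose support is not all of $A$.

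First I would record that $P_A = P_A Q_A$ as operators on $L^2(X^n)$. Indeed $Q_A$ is the orthogonal projection onto $L^2(X^A)$, so for any $f$ the difference $f - Q_A f$ lies in $L^2(X^A)^\perp$; since the image of $P_A$ is contained in $L^2(X^A)$, the orthogonal projection $P_A$ annihilates $L^2(X^A)^\perp$, whence $P_A f = P_A Q_A f$. (Equivalently, this follows from~\eqref{eq:incl-excl} together with $Q_B Q_A = Q_B$ for $B \subset A$.) Applying this to $f = 1_S$ and invoking \Cref{lem:Q_AS_B},
\[
    P_A 1_S = P_A Q_A 1_S = \frac{n!}{n^n}\frac{n^m}{(n)_m}\, P_A 1_{S_A}.
\]

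Next I would substitute the expansion $1_{S_A} = \sum_{\pi \in \Pi_A} \mu(\pi) c_\pi$ from \Cref{mobius-inversion-1} and apply $P_A$ termwise, so that it only remains to prove $P_A c_\pi = 0$ whenever $\supp \pi \subsetneq A$. For such a $\pi$, set $B = \supp \pi$; since $c_\pi$ is the indicator of $\pi$-measurability and the singleton cells of $\pi$ impose no constraint, $c_\pi$ depends only on the coordinates indexed by $B$, i.e., $c_\pi \in L^2(X^B)$. On the other hand, by the very definition of $P_A$ its image lies in $L^2(X^B)^\perp$ (as $B$ is one of the proper subsets of $A$), so the image of $P_A$ is orthogonal to $c_\pi$, forcing $P_A c_\pi = 0$. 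Therefore $\sum_{\pi \in \Pi_A}\mu(\pi)P_A c_\pi = \sum_{\pi \in \Pi'_A}\mu(\pi)P_A c_\pi$, and combining with the displayed identity gives the claim.

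There is no serious obstacle: the statement is a formal consequence of the two preceding lemmas. The only step requiring a moment's thought is the vanishing $P_A c_\pi = 0$ for $\supp \pi \subsetneq A$, which in turn reduces to the observation that $c_\pi$ is a function of the coordinates in $\supp \pi$ alone.
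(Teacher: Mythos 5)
Your proof is correct and follows the same route as the paper: write $P_A 1_S = P_A Q_A 1_S$, apply \Cref{lem:Q_AS_B} and \Cref{mobius-inversion-1}, and observe that $c_\pi \in L^2(X^{\supp\pi})$ is killed by $P_A$ when $\supp\pi \subsetneq A$. You have simply filled in the routine justifications (that $P_A = P_A Q_A$, and that the image of $P_A$ is orthogonal to $L^2(X^B)$ for proper $B \subset A$) that the paper leaves implicit.
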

\begin{proof}
    Combining the previous two lemmas,
    \[
        P_A 1_S
        = P_A Q_A 1_S
        = \frac{n!}{n^n} \frac{n^m}{(n)_m} \sum_{\pi \in \Pi_A} \mu(\pi) P_A c_\pi.
    \]
    Since $c_\pi \in L^2(X^{\supp \pi})$, only the terms with $\supp \pi = A$ survive.
\end{proof}

We can use $U$ to give another formula for $P_A1_S$.
If $x \in X^A$ (i.e., $x : A \to X$), the \emph{kernel} $\ker x \in \Pi_A$ of $x$ is the level set partition
\[
    \ker x = \bigl\{ x^{-1}(t) : t \in X, x^{-1}(t) \neq \emptyset\bigr\}.
\]
Note that
\[
    c_\pi(x) = 1 \iff x~\text{is $\pi$-measurable} \iff \pi \leq \ker x.
\]

\begin{lemma}%
    \label{prop:PA1S-sparseval-like-expression}
    Let $A \subset [n]$, $|A| = m$.
    For $x \in X^n$, let $\pi = \ker(x|_A) \in \Pi_A$. Then
    \[
        P_A1_S(x) = (-1)^{\rank(\pi)} \frac{n!}{n^n} U \prod_{p \in \pi} (|p| z - 1).
    \]
\end{lemma}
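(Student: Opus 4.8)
The plan is to evaluate $P_A 1_S$ pointwise by expanding $P_A$ in terms of the coordinate projections $Q_B$ via the inclusion--exclusion formula~\eqref{eq:incl-excl}, and then to substitute the explicit formula for $Q_B 1_S$ coming from \Cref{lem:Q_AS_B}. Applying \Cref{lem:Q_AS_B} with $B$ in place of $A$ gives $Q_B 1_S = \frac{n!}{n^n}\frac{n^{|B|}}{(n)_{|B|}}1_{S_B}$, and hence, for any $x \in X^n$,
\[
    P_A 1_S(x) = \frac{n!}{n^n}\sum_{B \subset A}(-1)^{|A\setminus B|}\frac{n^{|B|}}{(n)_{|B|}}\,1_{S_B}(x).
\]

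Next I would identify the contributing $B$. Writing $\pi = \ker(x|_A)$, the indicator $1_{S_B}(x)$ equals $1$ exactly when $x|_B$ is injective, i.e.\ when $B$ meets each cell of $\pi$ at most once. Such $B$ are in bijection with pairs consisting of a subset $T \subseteq \pi$ (the cells met by $B$) together with a choice of element $b_p \in p$ for each $p \in T$; in particular $|B| = |T|$, and for fixed $T$ there are $\prod_{p\in T}|p|$ such $B$. Grouping the sum over $B$ according to $T$, and using $|A| = m = \sum_{p\in\pi}|p|$ so that $|A\setminus B| = m - |T|$, this rewrites as
\[
    P_A 1_S(x) = \frac{n!}{n^n}\sum_{T \subseteq \pi}(-1)^{m - |T|}\,\frac{n^{|T|}}{(n)_{|T|}}\prod_{p \in T}|p|.
\]

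Finally I would match this against the claimed formula by expanding the product: $\prod_{p\in\pi}(|p|z - 1) = \sum_{T\subseteq\pi}(-1)^{|\pi|-|T|}\bigl(\prod_{p\in T}|p|\bigr)z^{|T|}$, and then applying $U$ term by term—which is legitimate since $|T| \le |\pi| \le m \le n$, so no term is annihilated—to get $U\prod_{p\in\pi}(|p|z-1) = \sum_{T\subseteq\pi}(-1)^{|\pi|-|T|}\bigl(\prod_{p\in T}|p|\bigr)\frac{n^{|T|}}{(n)_{|T|}}$. Since $\rank(\pi) = m - |\pi|$, we have $(-1)^{\rank(\pi)}(-1)^{|\pi|-|T|} = (-1)^{m-|T|}$, so the two displays agree and the lemma follows. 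I do not anticipate a genuine obstacle here: the only delicate points are the sign bookkeeping and the bijection between partial transversals $B$ of $\pi$ and the cell-selection data $\bigl(T, (b_p)_{p\in T}\bigr)$. (One could alternatively run essentially the same computation starting from \Cref{lem:P_A-on-1_S}, expanding $\mu$ and the action of $P_A$ on the $c_\pi$'s, but routing through \Cref{lem:Q_AS_B} as above is cleaner.)
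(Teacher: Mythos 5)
Your proof is correct and follows essentially the same route as the paper's: both start from the inclusion--exclusion formula~\eqref{eq:incl-excl} together with \Cref{lem:Q_AS_B}, observe that the contributing $B$ are exactly those meeting each cell of $\pi$ at most once, and recognize the resulting sum as $U$ applied to the expansion of $\prod_{p\in\pi}(|p|z-1)$. The only difference is that you make the grouping by the set $T$ of cells met by $B$ fully explicit, whereas the paper compresses this step by recognizing the alternating sum $\sum_{B}(-1)^{|A\setminus B|}z^{|B|}1_{S_B}(x)$ directly as the product.
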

\begin{proof}
    From~\eqref{eq:incl-excl} and \Cref{lem:Q_AS_B} we have
    \[
        P_A 1_S = \frac{n!}{n^n} \sum_{B \subset A} (-1)^{|A \setminus B|} \frac{n^{|B|}}{(n)_{|B|}} 1_{S_B}.
    \]
    Now, the sets $B$ such that $x|_B$ is injective are precisely those which intersect
    each cell of $\pi$ in at most one point.
    Hence
    \begin{align*}
        P_A1_S(x)
        &= \frac{n!}{n^n} U \sum_{B \subset A} (-1)^{|A \setminus B|} z^{|B|} 1_{S_B}(x) \\
        &= \frac{n!}{n^n} (-1)^{|A| - |\pi|} U \prod_{p \in \pi} (|p| z - 1).\qedhere
    \end{align*}
\end{proof}

\subsection{Sparseval}%
\label{sec:sparseval}
The word \emph{sparseval} is our playful term for the computation of $\|P_A f\|_2^2$ for any $A \subset [n]$. This is possible by inclusion--exclusion and orthogonality: since
\[
    \|Q_A f\|_2^2 = \sum_{B \subset A} \|P_B f\|_2^2,
\]
it follows that
\begin{equation}
    \label{eq:abstract-sparseval}
    \|P_A f\|_2^2 = \sum_{B \subset A} (-1)^{|A\setminus B|} \|Q_B f\|_2^2.
\end{equation}

\begin{lemma}%
    \label{lem:L2-to-sparseval}
    If $A \subset [n]$ and $|A| = m$,
    \[
        \|P_A 1_S\|_2^2 = \pfrac{n!}{n^n}^2 U\bigl((z-1)^m\bigr).
    \]
\end{lemma}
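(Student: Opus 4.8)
The plan is to run the abstract sparseval identity~\eqref{eq:abstract-sparseval} directly, evaluating each $\|Q_B 1_S\|_2^2$ via \Cref{lem:Q_AS_B} and then recognizing the resulting polynomial identity as an application of the linear functional $U$. So first I would write
\[
    \|P_A 1_S\|_2^2 = \sum_{B \subset A} (-1)^{|A \setminus B|} \|Q_B 1_S\|_2^2,
\]
and then substitute $Q_B 1_S = \tfrac{n!}{n^n}\tfrac{n^{|B|}}{(n)_{|B|}} 1_{S_B}$. The only genuine computation is $\|1_{S_B}\|_2^2$: with the normalized inner product on $L^2(X^n)$ this is the density of injections $B \to X$ among all functions $B \to X$, namely $(n)_{|B|}/n^{|B|}$ (and this is unaffected by viewing $1_{S_B}$ inside $L^2(X^n)$ rather than $L^2(X^B)$, since it does not depend on the extra coordinates). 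Combining the two factors of $n^{|B|}/(n)_{|B|}$ against the one factor of $(n)_{|B|}/n^{|B|}$ collapses things to
\[
    \|Q_B 1_S\|_2^2 = \pfrac{n!}{n^n}^2 \frac{n^{|B|}}{(n)_{|B|}} = \pfrac{n!}{n^n}^2 U\bigl(z^{|B|}\bigr),
\]
the last equality being the definition of $U$ (legitimate since $|B| \leq m \leq n$, so we are always in the range $k \leq n$).

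Then I would pull the constant out of the sum and use linearity of $U$:
\[
    \|P_A 1_S\|_2^2 = \pfrac{n!}{n^n}^2 \, U\!\left( \sum_{B \subset A} (-1)^{|A \setminus B|} z^{|B|} \right),
\]
and finish with the binomial identity $\sum_{B \subset A} (-1)^{|A \setminus B|} z^{|B|} = \sum_{k=0}^m \binom{m}{k}(-1)^{m-k} z^k = (z-1)^m$, which gives exactly $\|P_A 1_S\|_2^2 = (n!/n^n)^2 U\bigl((z-1)^m\bigr)$.

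There is no real obstacle here: the lemma is essentially a bookkeeping exercise once \eqref{eq:abstract-sparseval} and \Cref{lem:Q_AS_B} are in hand. The one point that needs care is the normalization convention — all $L^2$ norms are taken with respect to the probability (expectation) inner product, which is what makes $\|1_{S_B}\|_2^2 = (n)_{|B|}/n^{|B|}$ rather than $(n)_{|B|}$ — and the observation that expanding $(z-1)^m$ never produces an exponent exceeding $n$, so the truncation in the definition of $U$ is irrelevant and we may treat $U$ as a plain linear functional throughout.
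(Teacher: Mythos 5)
Your proof is correct and takes essentially the same route as the paper: apply the abstract sparseval identity~\eqref{eq:abstract-sparseval}, substitute \Cref{lem:Q_AS_B}, compute $\|1_{S_B}\|_2^2 = (n)_{|B|}/n^{|B|}$, and recognize the resulting alternating sum as $U\bigl((z-1)^m\bigr)$. The only difference is that you spell out a few intermediate steps (linearity of $U$, the binomial identity, the truncation being vacuous) that the paper leaves implicit.
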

\begin{proof}
    Note that $\|1_{S_B}\|_2^2 = (n)_{|B|}/n^{|B|}$ for every $B\subset A$. Hence, from~\eqref{eq:abstract-sparseval} and \Cref{lem:Q_AS_B},
    \[
        \|P_A 1_S\|_2^2 \pfrac{n!}{n^n}^{-2}
        = \sum_{B \subset A} (-1)^{|A \setminus B|} \frac{n^{|B|}}{(n)_{|B|}}
        = U\bigl((z-1)^m\bigr).
    \]
\end{proof}

\begin{proposition}%
    \label{prop:umbral-sparseval}
    Assume $0 \leq m \leq n$ and let $t = m/n$.
    Then
    \[
        0 \leq U\bigl((z-1)^m\bigr)
        \ll
        \binom{n}{m}^{-1} e^{s(t) n}
        ,
    \]
    where
    \[
        s(t)
        = t^{1/2} - t \log t^{1/2} - (1-t) \log(1 + t^{1/2})
        .
    \]
    In particular
    \[
        U\bigl((z-1)^m\bigr) \leq e^{O(m)} (m/n)^{m/2}.
    \]
\end{proposition}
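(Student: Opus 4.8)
I begin by observing that the lower bound $U\bigl((z-1)^m\bigr)\ge 0$ is immediate: by \Cref{lem:L2-to-sparseval} this quantity equals $(n^n/n!)^2\,\|P_A 1_S\|_2^2$ for any $A$ with $|A|=m$. For the upper bound the plan is to pass from the alternating sum to an integral and then run a Laplace-type estimate. Writing $(n)_k=n!/(n-k)!$ and $(n-k)!=\int_0^\infty u^{n-k}e^{-u}\,du$, the binomial sum $\sum_{k\le m}\binom{m}{k}(-1)^{m-k}n^k/(n)_k$ collapses inside the integral to
\[
    U\bigl((z-1)^m\bigr)=\frac{1}{n!}\int_0^\infty e^{-u}u^{n-m}(n-u)^m\,du=\frac{n^{n+1}}{n!}\int_0^\infty e^{-nv}v^{n-m}(1-v)^m\,dv
\]
after the substitution $u=nv$. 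Bounding $(1-v)^m\le|1-v|^m$, the right-hand side is at most $\tfrac{n^{n+1}}{n!}\int_0^\infty e^{n\phi(v)}\,dv$, where $t=m/n$ and $\phi(v):=-v+(1-t)\log v+t\log|1-v|$.

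For the main estimate I would split the integral at $v=1$. On each of $(0,1)$ and $(1,\infty)$ the function $\phi$ is smooth and strictly concave, with a unique interior maximum at $v_*=1-t^{1/2}$ and at $v^{**}=1+t^{1/2}$ respectively (each is the relevant root of the quadratic $\phi'=0$). A short computation gives $\phi(v_*)+1=s(t)-H(t)$ where $H(t)=-t\log t-(1-t)\log(1-t)$, and $\phi(v^{**})=\phi(v_*)-f(t^{1/2})$ with $f(x)=2x-(1-x^2)\log\frac{1+x}{1-x}$; since $f(0)=0$ and $f'(x)=2x\log\frac{1+x}{1-x}\ge 0$, we get $\phi(v^{**})\le\phi(v_*)$. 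To convert concavity into a Gaussian bound I would use a uniform curvature estimate: on $(0,1)$, Cauchy--Schwarz gives $|\phi''(v)|\cdot\bigl((1-t)v^2+t(1-v)^2\bigr)\ge 1$, and the second factor is $\le 1$, so $|\phi''|\ge 1$ throughout $(0,1)$; likewise $|\phi''|\ge 1/4$ on $(1,2)$. Taylor's theorem then yields $\phi(v)\le\phi(v_*)-\tfrac12(v-v_*)^2$ on $(0,1)$ and $\phi(v)\le\phi(v^{**})-\tfrac18(v-v^{**})^2$ on $(1,2)$, so $\int_0^1 e^{n\phi}\le\sqrt{2\pi/n}\,e^{n\phi(v_*)}$ and $\int_1^2 e^{n\phi}\le\sqrt{8\pi/n}\,e^{n\phi(v_*)}$; on $(2,\infty)$ the crude bound $\phi(v)\le-v+\log v$ gives $\int_2^\infty e^{n\phi}\le\int_2^\infty e^{-nv}v^n\,dv\le(2/e^2)^n$. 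Substituting back and using $n!\ge\sqrt{2\pi n}(n/e)^n$ gives
\[
    U\bigl((z-1)^m\bigr)\le 3\,e^{n(s(t)-H(t))}+\sqrt{n/(2\pi)}\,(2/e)^n.
\]
Since $s(t)-H(t)=\phi(v_*)+1\ge\phi(1/2)+1=\tfrac12-\log 2>\log(2/e)$, the first term dominates the second once $n$ is large, so $U\bigl((z-1)^m\bigr)\le 4\,e^{n(s(t)-H(t))}$; finally $\binom{n}{m}\le e^{nH(t)}$ gives $U\bigl((z-1)^m\bigr)\le 4\binom{n}{m}^{-1}e^{s(t)n}$, which is the claim (small $n$ being trivial).

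The ``in particular'' then follows by feeding in the elementary bounds $\binom{n}{m}\ge(n/m)^m$ and $s(t)=\tfrac12 t\log(1/t)+O(t)$ — the latter because $s(t)+\tfrac12 t\log t=t^{1/2}-(1-t)\log(1+t^{1/2})$ is $O(t)$ on $(0,1]$ — so that $e^{s(t)n}\le(n/m)^{m/2}e^{O(m)}$ and hence $U\bigl((z-1)^m\bigr)\le(m/n)^{m/2}e^{O(m)}$. The two extreme cases are handled directly: $m=0$ gives $U(1)=1$, and for $m=n$ one estimates $\tfrac1{n!}\int_0^\infty e^{-u}|n-u|^n\,du$ by splitting at $u=n$. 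The one point requiring genuine care is making the Laplace step rigorous and \emph{uniform} in $t\in[0,1]$: as $t\to 0$ or $t\to 1$ the saddle $v_*$ runs into the boundary, and the polynomial prefactors matter because they must cancel the factor $\sqrt n$ coming from $n^{n+1}/n!$. I would avoid classical Laplace asymptotics precisely to sidestep this, replacing them with the global lower bound $|\phi''|\ge 1$ (valid for \emph{all} $t$), which turns concavity into a clean Gaussian integral bound with an absolute constant; the only cases that fall outside this scheme are $m\in\{0,n\}$, treated above.
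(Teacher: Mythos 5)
Your proof is correct, and it takes a genuinely different route to the estimate than the paper does. The paper identifies the alternating sum as $\binom{n}{m}^{-1}$ times $[X^m]\,e^{nX}/(1+X)^{n-m+1}$ and then extracts the coefficient by a complex contour integral in the spirit of the saddle-point method (citing the cyclic-groups paper for details, and explicitly omitting the delicate case $t$ near $1$). You instead obtain the integral representation directly from $(n-k)! = \int_0^\infty u^{n-k}e^{-u}\,du$, which collapses the binomial sum to $\frac{n^{n+1}}{n!}\int_0^\infty e^{-nv}v^{n-m}(1-v)^m\,dv$, and then run a \emph{real} Laplace estimate. The key technical innovation on your end is the uniform curvature bound $|\phi''|\ge 1$ on $(0,1)$ (via Cauchy--Schwarz applied to $(1-t)/v^2 + t/(1-v)^2$), together with the analogous bound on $(1,2)$. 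This replaces classical Laplace asymptotics with a clean Gaussian majorization whose implicit constant is absolute and does not degrade as $v_*$ approaches the boundary, which is exactly the issue the paper sidesteps by restricting to $t\le 1/2$. Your route is therefore more self-contained, avoids complex analysis, and handles the full range $t\in[0,1]$ uniformly. The one line that reads oddly is ``$(1-v)^m\le|1-v|^m$,'' which is not an inequality as written (for $v>1$, $m$ odd, the left side is negative); what you mean, and what is correct, is to bound $|U((z-1)^m)|$ by the integral of the absolute value of the integrand, legitimate since you already established $U((z-1)^m)\ge 0$. All the algebraic identities you use — $\phi(v_*)+1 = s(t)-H(t)$, $\phi(v^{**})\le\phi(v_*)$ via $f(x)=2x-(1-x^2)\log\frac{1+x}{1-x}$ having $f'(x)=2x\log\frac{1+x}{1-x}\ge0$, and the cancellation of the $\sqrt t$ term in $s(t)+\tfrac12 t\log t$ — check out.
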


\begin{proof}[Sketch]
    The inequality $U\bigl((z-1)^m\bigr) \geq 0$ follows from the previous lemma.
    For the main claim, by expanding we have
    \[
        U\bigl((z-1)^m\bigr) = \frac1{n!} \sum_{k=0}^m \binom{m}{k} (-1)^{m-k} n^k (n-k)!,
    \]
    and this can be identified as $\binom{n}{m}^{-1}$ times the coefficient of $X^m$ in
    $e^{nX} / (1+X)^{n-m+1}$.
    The stated bound follows by taking a contour integral (chosen in the spirit of the saddle-point method) to extract the coefficient.
    For details see~\cite[bound for the sum in (5.4)]{EMM-cyclic}.
    Extra care is needed for $t$ near 1, but we omit the details because we will not use the claim for $t > 1/2$. The second bound follows by Stirling's formula.
\end{proof}

The following corollary will not be used but is included for interest.

\begin{corollary}
    The sign of $P_A1_S(x)$ is $(-1)^{\rank(\ker x|_A)}$.
\end{corollary}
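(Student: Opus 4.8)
The plan is to reduce the statement to a positivity property of the linear functional $U$, and then prove that property by expanding in the ``right'' basis. Concretely, \Cref{prop:PA1S-sparseval-like-expression} tells us that, writing $\pi = \ker(x|_A)$,
\[
  P_A 1_S(x) = (-1)^{\rank(\pi)}\,\frac{n!}{n^n}\,U\prod_{p\in\pi}(|p|z - 1),
\]
so the corollary (interpreted as $(-1)^{\rank(\pi)}P_A1_S(x)\ge 0$) is equivalent to the assertion that $U\prod_{p\in\pi}(|p|z-1)\ge 0$ for every $\pi\in\Pi_A$ with $A\subseteq[n]$.

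To establish this inequality I would expand the product in powers of $z-1$ rather than of $z$. For each cell $p$ we have $|p|z - 1 = |p|(z-1) + (|p|-1)$, and both coefficients are nonnegative because $|p|\ge 1$. Multiplying out,
\[
  \prod_{p\in\pi}(|p|z-1) = \sum_{k=0}^{|\pi|} b_k\,(z-1)^k,
  \qquad
  b_k = \sum_{\substack{T\subseteq\pi\\ |T|=k}}\ \prod_{p\in T}|p|\ \prod_{p\in\pi\setminus T}(|p|-1)\ \ge\ 0 .
\]
Applying the linear map $U$ termwise, it remains only to note that $U\bigl((z-1)^k\bigr)\ge 0$ for every exponent $k$ appearing here. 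Since $k\le|\pi|\le|A|\le n$, this is exactly \Cref{lem:L2-to-sparseval} (equivalently \Cref{prop:umbral-sparseval}): $U\bigl((z-1)^k\bigr)=(n^n/n!)^2\,\|P_B1_S\|_2^2\ge 0$ for any $B\subseteq[n]$ with $|B|=k$. Hence $U\prod_{p\in\pi}(|p|z-1)=\sum_k b_k\,U\bigl((z-1)^k\bigr)\ge 0$, which finishes the proof.

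The only point requiring any thought — the ``main obstacle'', such as it is — is the choice of basis. In the monomial basis the values $U(z^k)=n^k/(n)_k$ are individually positive, but the coefficients of $\prod_{p\in\pi}(|p|z-1)$ have alternating signs, so positivity is not visible; conversely $(z-1)^k$ has alternating coefficients in $z$, yet we already know $U\bigl((z-1)^k\bigr)\ge 0$ from the sparseval computation. Expanding about $z=1$ is precisely what makes the coefficients $b_k$ and the functional values $U\bigl((z-1)^k\bigr)$ nonnegative simultaneously, after which nothing remains to be done.
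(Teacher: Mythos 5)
Your proof is essentially the paper's own: both reduce via \Cref{prop:PA1S-sparseval-like-expression} to showing $U\prod_{p\in\pi}(|p|z-1)\ge 0$, expand each factor as $|p|(z-1)+(|p|-1)$ to get nonnegative coefficients in the $(z-1)$-basis, and conclude from $U\bigl((z-1)^k\bigr)\ge 0$ (which comes from the sparseval identity). The argument is correct and matches the paper in structure, down to the choice of basis.
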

\begin{proof}
    Let $\pi = \ker(x|_A)$.
    By \Cref{prop:PA1S-sparseval-like-expression} it suffices to prove that
    \[
        U\prod_{p \in \pi} (|p|z - 1) > 0.
    \]
    There are nonnegative integers $r_\omega \geq 0$ such that
    \[
        \prod_{p \in \pi} (|p|z - 1) = \prod_{p \in \pi} \bigl(|p|(z - 1) + (|p| - 1)\bigr) = \sum_{\omega \subset \pi} r_\omega (z-1)^{|\omega|}.
    \]
    Hence the claim follows from $U\bigl((z-1)^m\bigr) \geq 0$.
\end{proof}

\section{Major arcs}%
\label{sec:major-arcs}

\def\frakS{\mathfrak{S}}

The goal in this section is to prove Theorem~\ref{thm:major-arcs}.
Define
\begin{align*}
    & \frakS_m = \sum_{2k \leq m} \frac{(-1)^k}{2^k k!}, \\
    & M_m = \sum_{\substack{A \subset [n] \\ |A| \leq m}} \Lambda(P_A1_S, P_A1_S, P_A1_S).
\end{align*}
Our aim is to prove that, for $m \leq c n^{1/2}$,
\begin{equation}
	\label{eq:true-major-arcs-goal}
    M_m = \bigl(\frakS_m + O(m^2/n)\bigr) \pfrac{n!}{n^n}^3.
\end{equation}
In particular this implies \Cref{thm:major-arcs}.

\subsection{The quantities $\gamma$ and $\gamma_0$}

From \Cref{lem:P_A-on-1_S} it is clear that to estimate $\Lambda(P_A1_S, P_A1_S, P_A1_S)$ it suffices to estimate $\Lambda(P_A c_{\pi_1}, P_A c_{\pi_2}, P_A c_{\pi_3})$ for every partition system $\psystem = (\pi_1, \pi_2, \pi_3)$ with support $A$ and aggregate the results with the appropriate weighting.
For continuity with~\cite[Section~4]{EMM}, we define the normalized quantities
\[
    \gamma_0(\psystem) = n^{\trank(\psystem)} \Lambda(c_{\pi_1}, c_{\pi_2}, c_{\pi_3})
\]
and
\[
    \gamma(\psystem)
    = n^{\trank(\psystem)} \Lambda(P_A c_{\pi_1}, P_A c_{\pi_2}, P_A c_{\pi_3})
\]
for any partition triple $\psystem = (\pi_1, \pi_2, \pi_3)$.
Note that
\[
    0 \leq \gamma_0(\psystem) \leq 1
\]
by \Cref{lem:crank-bound,lem:crank>=trank}.
Since $c_\pi \in L^2(X^{\supp \pi})$, $\gamma(\psystem) = 0$ unless $\psystem$ is a partition system.

\begin{lemma}%
    \label{lem:trivial-gamma-bound}
    Let $\psystem$ be a partition system with support $\supp \psystem = A$ of size $m$,
    and suppose $m'$ points of $A$ are contained in cells $\pi_1 \vee \pi_2 \vee \pi_3$ of size at least $3$.
    Then
    \[
        |\gamma(\psystem)| \leq 2^{m'}.
    \]
\end{lemma}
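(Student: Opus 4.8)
The plan is to use the inclusion–exclusion formula~\eqref{eq:incl-excl} to expand $P_A c_{\pi_i} = \sum_{B_i \subseteq A} (-1)^{|A \setminus B_i|} Q_{B_i} c_{\pi_i}$, and then bound $\Lambda$ on the resulting sum of terms $\Lambda(Q_{B_1} c_{\pi_1}, Q_{B_2} c_{\pi_2}, Q_{B_3} c_{\pi_3})$ using \Cref{lem:crank-bound}. The first thing I would observe is that $Q_{B} c_\pi$ is itself a nonnegative combination of indicators $c_{\pi'}$ for coarsenings $\pi'$ of $\pi$ that are $\pi$-restrictions to the ``$B$-side''; more precisely, $Q_B c_\pi$ is a function on $X^A$ that equals the probability that a random extension of the relevant data is $\pi$-measurable, so it is controlled in $L^\infty$ by $1$. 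Since $c_\pi$, hence $Q_B c_\pi$, are all nonnegative and bounded by $1$ pointwise, the trilinear form satisfies $|\Lambda(Q_{B_1} c_{\pi_1}, Q_{B_2} c_{\pi_2}, Q_{B_3} c_{\pi_3})| \le \Lambda(c_{\pi_1}, c_{\pi_2}, c_{\pi_3}) \le n^{-\crank(\psystem)}$ by \Cref{lem:crank-bound} — but this crude bound loses the sign cancellation and is not enough on its own, so I would instead group the $2^{|A \setminus B_1|} \cdot 2^{|A \setminus B_2|} \cdot 2^{|A \setminus B_3|}$ terms and extract the factor $n^{\trank(\psystem)}$.

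The cleaner route, matching the normalization $\gamma(\psystem) = n^{\trank(\psystem)} \Lambda(P_A c_{\pi_1}, P_A c_{\pi_2}, P_A c_{\pi_3})$, is: restrict attention to the support. Write $A = \supp\psystem$, $|A| = m$, and let $A_3 \subseteq A$ be the set of the $m'$ points lying in cells of $\pi_1 \vee \pi_2 \vee \pi_3$ of size $\ge 3$, with $A_2 = A \setminus A_3$ the part covered by matchings. On $A_2$, where all three partitions induce matchings on each join-cell, \Cref{claim:cool-crank-value} pins down the combinatorial rank exactly and shows the contribution is of the expected order with no loss; this is where the bound is ``tight'' and contributes the constant $1$. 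On $A_3$, each point lying in a large join-cell can be handled by a factor of $2$: the key estimate is that restricting $P_A c_{\pi_i}$ to any single coordinate in $A_3$ and summing over its possible values costs at most a factor of $2$ relative to the trank-normalized size, because the $Q_B$–expansion over subsets $B$ of that coordinate's cell telescopes. Carrying out this coordinate-by-coordinate (or cell-by-cell) peeling over the $m'$ points of $A_3$ yields the bound $2^{m'}$.

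The main obstacle I expect is making the ``factor of $2$ per bad point'' argument precise without circular reasoning: one must show that the $\trank$ normalization already accounts for the ``matching part'' of the structure exactly (via \Cref{claim:cool-crank-value} applied to the sub-partition-system on $A_2$, or via the additivity of $\trank$, $\rank$, and $\crank$ across join-cells noted in that lemma's proof), so that the entire slack is concentrated on the $m'$ points of $A_3$, and then bound that slack by $2^{m'}$. Concretely I would prove: (i) $\trank$, and $\Lambda$ of the $c_\pi$'s, factor as products over the cells of $\pi_1 \vee \pi_2 \vee \pi_3$; (ii) for a single join-cell that is a matching on all three, $\gamma_0 = 1$ and $\gamma = 1 + O(1/n)$ (essentially the $k=1$ case of \Cref{claim:cool-crank-value} combined with the $U$-operator estimates from \Cref{sec:fourier}); (iii) for a single join-cell of size $\ell \ge 3$, the corresponding factor of $\gamma(\psystem)$ is bounded by $2^{\ell}$ — which follows by expanding via~\eqref{eq:incl-excl} restricted to that cell, using $|Q_B c_\pi| \le 1$, and counting that there are at most $2^\ell$ surviving sign patterns after using $\crank \ge \trank$ on the sub-system. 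Multiplying (ii) and (iii) over all join-cells gives $|\gamma(\psystem)| \le \prod 2^{\ell_j} = 2^{m'}$, since only cells of size $\ge 3$ contribute a nontrivial factor. The delicate point throughout is keeping the bookkeeping of which exponent of $n$ comes from $\trank$ versus $\crank$ consistent, but since we only need an inequality (not an asymptotic), the coarse bound $|Q_B c_\pi| \le 1$ suffices everywhere on the $A_3$ part.
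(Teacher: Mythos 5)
Your plan is essentially the paper's: expand $P_A c_{\pi_i}$ by inclusion--exclusion, use the $\crank$/$\trank$ machinery to show each normalized term is $O(1)$, and then improve $2^m$ to $2^{m'}$ by exploiting multiplicativity of $\gamma$, $\trank$, and $\Lambda$ over the cells of $\pi_1 \vee \pi_2 \vee \pi_3$ (handling the doubleton cells exactly). The paper's sketch is organized a little differently but the content is the same. That said, there are three points where your write-up is off or incomplete, and they are worth flagging.

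First, the pointwise inequality you assert, $Q_B c_\pi \le c_\pi$ (and hence $|\Lambda(Q_{B_1} c_{\pi_1}, Q_{B_2} c_{\pi_2}, Q_{B_3} c_{\pi_3})| \le \Lambda(c_{\pi_1}, c_{\pi_2}, c_{\pi_3})$), is simply false: one has $Q_B c_\pi = n^{-\rank(\pi) + \rank(\pi|_B)} c_{\pi|_B}$, and taking $B = \emptyset$ makes $Q_\emptyset c_\pi$ a strictly positive constant while $c_\pi$ vanishes on most of the space. You do not end up relying on this, but it should be corrected. Second, expanding all three $P_A$'s gives $2^{3m}$ terms, which is wasteful; the natural step (used in the paper) is to note that the orthogonality lemma gives $\Lambda(P_A c_{\pi_1}, P_A c_{\pi_2}, P_A c_{\pi_3}) = \Lambda(c_{\pi_1}, c_{\pi_2}, P_A c_{\pi_3})$, so that a single expansion over $B \subset A$ suffices, and then $\Lambda(c_{\pi_1}, c_{\pi_2}, Q_B c_{\pi_3}) = \Lambda(Q_B c_{\pi_1}, Q_B c_{\pi_2}, Q_B c_{\pi_3})$ because $Q_B c_{\pi_3}$ depends only on $B$-coordinates. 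Third, and most substantively, the ingredient that makes each normalized term $\le 1$ is not merely $\crank \ge \trank$ on the subsystem, as you write in step (iii). After writing $Q_B c_{\pi_i} = n^{-\rank(\pi_i) + \rank(\pi_i|_B)} c_{\pi_i|_B}$ and applying $\Lambda(c_{\pi_1|_B}, c_{\pi_2|_B}, c_{\pi_3|_B}) \le n^{-\trank(\psystem|_B)}$, what you need is the monotonicity statement
\[
t(\psystem, B) := \trank(\psystem|_B) - \trank(\psystem) + \sum_{i=1}^3 \bigl(\rank(\pi_i) - \rank(\pi_i|_B)\bigr) \ge 0,
\]
which is a genuine lemma about the partition lattice proved in the earlier paper \cite{EMM} and cited here without proof. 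This is the load-bearing fact; $\crank \ge \trank$ alone will not close step (iii). With that identified and the single-$P_A$ reduction in place, your cell-by-cell peeling over the join does yield $|\gamma(\psystem)| \le 2^{m'}$ exactly as you intend.
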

\begin{proof}[Sketch]
The idea is that
\begin{align*}
    \Lambda(P_A c_{\pi_1}, P_A c_{\pi_2}, P_A c_{\pi_3})
    &= \Lambda(c_{\pi_1}, c_{\pi_2}, P_A c_{\pi_3}) \\
    &= \sum_{B \subset A} (-1)^{|A \setminus B|}
    \Lambda(c_{\pi_1}, c_{\pi_2}, Q_B c_{\pi_3})
    \\
    &= \sum_{B \subset A} (-1)^{|A\setminus B|}
    \Lambda(Q_B c_{\pi_1}, Q_B c_{\pi_2}, Q_B c_{\pi_3}),
\end{align*}
and
\[
    Q_B c_\pi = n^{-\rank(\pi) + \rank(\pi|_B)} c_{\pi|_B},
\]
where
\[
    \pi|_B = \{p \cap B : p \in \pi, p \cap B \neq \emptyset\}.
\]
Let $\psystem|_B = (\pi_1 |_B, \pi_2|_B, \pi_3|_B)$.
Then, normalizing,
\[
    \gamma(\psystem)
    = \sum_{B \subset A} (-1)^{|A \setminus B|} \gamma_0(\psystem|_B) n^{-t(\psystem, B)},
\]
where
\[
    t(\psystem, B) = \trank(\psystem|_B) - \trank(\psystem) + \sum_{i=1}^3 \bigl(\rank(\pi_i) - \rank(\pi_i|_B)\bigr).
\]
In~\cite[Section~4]{EMM} we showed $t(\psystem, B) \geq 0$.
Since $\gamma_0(\psystem|_B) \in [0, 1]$ for all $B$ this shows $|\gamma(\psystem)| \leq 2^m$.
The stronger bound with $m'$ in place of $m$ follows by separating off the doubleton cells of $\pi_1 \vee \pi_2 \vee \pi_3$.
See~\cite[Section~4]{EMM} for details.
\end{proof}

\subsection{The \texorpdfstring{$M_m(z)$}{M\_m(z)} series}

For a partition triple $\psystem = (\pi_1, \pi_2, \pi_3)$ we use the shorthand
\[
  \mu(\psystem) = \mu(\pi_1)\, \mu(\pi_2)\, \mu(\pi_3).
\]
From Lemma~\ref{lem:P_A-on-1_S} we have
\[
  M_m
  = \pfrac{n!}{n^n}^3
  \sum_{|\supp \psystem| \leq m}
  \pfrac{n^{|\supp \psystem|}}{(n)_{|\supp \psystem|}}^3
  \mu(\psystem)
  \gamma(\psystem)
  n^{-\trank(\psystem)},
\]
where the sum is over all partition systems on $[n]$.
For $z\in\CC$ define
\[
  M_m(z) = \pfrac{n!}{n^n}^3
  \sum_{|\supp \psystem| \leq m} \pfrac{n^{|\supp \psystem|}}{(n)_{|\supp \psystem|}}^3 \mu(\psystem) \gamma(\psystem) n^{-|\supp\psystem|} z^{\cx(\psystem)}.
\]
As we have mentioned, $\cx(\psystem) \geq 0$ for any partition system $\psystem$, so $M_m(z)$ is a polynomial such that $M_m = M_m(1/n)$.
By bounding $M_m(z)$ and using some complex analysis we will show $M_m(1/n) \approx M_m(0)$, and then we will directly estimate $M_m(0)$.

\begin{proposition}%
  \label{prop:major-arcs-estimate}
  There is a constant $c>0$ such that, for $|z|^{1/2} \le c / m$, we have
  \[
    |M_m(z)| \ll \pfrac{n^m}{(n)_m}^2 \pfrac{n!}{n^n}^3.
  \]
\end{proposition}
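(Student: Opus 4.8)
The plan is to bound $|M_m(z)|$ term by term and recognize the resulting sum as the cube of a simpler series that we can control. Recall $M_m(z) = \bigl(n!/n^n\bigr)^3 \sum_{|\supp\psystem| \le m} \bigl(n^{|\supp\psystem|}/(n)_{|\supp\psystem|}\bigr)^3 \mu(\psystem)\gamma(\psystem) n^{-|\supp\psystem|} z^{\cx(\psystem)}$, summed over partition systems on $[n]$. The first move is to pass to absolute values, using the crude bound $|\gamma(\psystem)| \le 2^{m'}$ from \Cref{lem:trivial-gamma-bound} (where $m'$ counts points in large cells of $\pi_1\vee\pi_2\vee\pi_3$) together with $|\mu(\pi_i)| = \prod_{p\in\pi_i}(|p|-1)!$. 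Since $\cx(\psystem) \ge 0$, on the disk $|z|^{1/2} \le c/m$ each factor $|z|^{\cx(\psystem)}$ is at most $(c/m)^{2\cx(\psystem)}$, and the exponent $\cx(\psystem) = \trank(\psystem) - |\supp\psystem|$ grows with the ``complexity'' of the system, so this suppresses everything except the near-matching configurations.

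The key step is then to organize the sum multiplicatively over the connected components of $\psystem$ — i.e. the cells of $\pi_1\vee\pi_2\vee\pi_3$ — and apply the exponential formula~\eqref{eq:exponential-formula}. A partition system with support of size $j$ that is ``connected'' (indiscrete join) contributes a bounded weight: for $j = 2$ the only possibility is a common doubleton $(\{p\},\{p\},\{p\})$ with $\mu$-weight $(-1)^3 = -1$, $\gamma = 1$ (since $\cx = 0$ here, $\Lambda$ is exactly computed), $\trank = 2$, giving the term $-n^{-2}\cdot(n^2/(n)_2)^3$; larger connected blocks of size $j$ contribute $O(1)^j (c/m)^{2\cx} n^{-\trank} (\text{combinatorial count})$, and one checks the combinatorial count of connected systems on $j$ labelled points is at most $j^{O(j)}$ while $\trank \ge j$, so after summing over which $j$ points form the block these terms are geometrically dominated once $c/m$ is small and, crucially, their total is $O(1/n)$ relative to the doubleton contribution. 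Exponentiating via~\eqref{eq:exponential-formula}, $\sum |M_m(z)| / (n!/n^n)^3$ factors as (roughly) $\bigl[\sum_{j} (\text{per-block weight for }j)\bigr]$ cubed in the three latin-square directions — more precisely, the per-point bookkeeping gives a factor $\bigl(n^m/(n)_m\bigr)^{O(1)}$ from the $(n^{|\supp|}/(n)_{|\supp|})^3$ prefactors truncated at support size $\le m$, and the connected-block generating function is $\exp(O(1))$, bounded. Collecting these yields $|M_m(z)| \ll \bigl(n^m/(n)_m\bigr)^2 (n!/n^n)^3$, as claimed; the extra $\bigl(n^m/(n)_m\bigr)^2$ (rather than cube) is exactly what survives after one power of $n^m/(n)_m$ is absorbed into the convergent block sum.

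I expect the main obstacle to be the uniform control of the connected-block generating function on the disk $|z|^{1/2} \le c/m$: one must show that the contribution of blocks of size $j \ge 3$, weighted by $\mu$, $\gamma$, and $z^{\cx}$, really is summable with a bound independent of $n$ and $m$ — this requires combining the factorial growth of $|\mu|$ against the $n^{-\trank}$ decay and the $(c/m)^{2\cx}$ suppression, in the regime where $j$ can be as large as $m$. This is precisely the kind of estimate carried out in \cite[Section~4]{EMM}, and the strategy is to reduce to it: split each $\pi_i$ into its doubleton part and its ``excess'' (cells of size $\ge 3$ or extra structure), bound the excess part by the complexity using \Cref{lem:crank>=trank} and \Cref{lem:trivial-gamma-bound}, and show the resulting excess-weighted sum is $O(1)^m |z|^{1/2} \cdot \text{poly}$, which is $O(c^m)$ on the disk — harmless. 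The doubleton-only part is an explicit finite computation giving the clean $\frakS_m$-type main term, which is where \Cref{claim:cool-crank-value} and the exact value $\gamma = \gamma_0 = 1$ for matching systems enter. Once the excess is quarantined, the remaining bound is a direct application of the exponential formula and Stirling.
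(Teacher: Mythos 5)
Your opening step---triangle inequality, then $|\gamma(\psystem)| \leq 2^{m'(\psystem)}$ from \Cref{lem:trivial-gamma-bound}, reducing to
\[
  \sum_{|A| \le m} n^{-|A|} \pfrac{n^{|A|}}{(n)_{|A|}}^3
  \sum_{\supp \psystem = A} 2^{m'(\psystem)}\, |\mu(\psystem)|\, |z|^{\cx (\psystem)}
\]
---is exactly what the paper does. At that point the paper simply cites \cite[Section~4.4]{EMM} for the analysis of this sum, whereas you sketch the analysis yourself via the exponential formula over connected components of $\pi_1 \vee \pi_2 \vee \pi_3$. The overall route is the same; you are reconstructing the contents of the cited reference.

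However, your sketch glosses over a genuine subtlety that must be resolved before the exponential formula applies. The quantity $\cx(\psystem) = \trank(\psystem) - |\supp \psystem|$ is only \emph{sub}additive over connected components: $\trank$ takes a max over $S_3$, and distinct components may realise the max with distinct permutations, so $\trank(\psystem) \le \sum_p \trank(\psystem|_p)$. Since $|z| < 1$, this gives $|z|^{\cx(\psystem)} \ge \prod_p |z|^{\cx(\psystem|_p)}$, which is the \emph{wrong} direction for an upper bound---you cannot directly write the sum as a product of per-component sums. The repair is to weaken $\trank$ to the additive $\lrank$ of \Cref{lem:crank>=trank}: the ``lower complexity'' $\lrank(\psystem) - |\supp\psystem|$ is genuinely additive over components, nonnegative on partition systems (for a connected system $\lrank \ge \tfrac54|\supp\psystem| - \tfrac12 \ge |\supp\psystem|$ for $|\supp\psystem| \ge 2$, since each $\pi_i$ has no singletons), and $\le \cx$; replacing $|z|^{\cx}$ by the larger $|z|^{\lrank - |\supp|}$ yields a multiplicative majorant that the exponential formula can digest. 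Two smaller slips: for a matching of support size $2k$ one has $\gamma(\psystem)=(1-1/n)^k$, not $1$; and the claim that the connected-block generating function is $\exp(O(1))$ uniformly on the disk $|z|^{1/2} \le c/m$ needs the balance of the factorial growth of $|\mu|$ against $n^{-|\supp|}$ and $(c/m)^{2(\lrank - |\supp|)}$ to be carried out in earnest---that is the content of \cite[Section~4.4]{EMM}, which is also where the precise $(n^m/(n)_m)^2$ comes from.
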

\begin{proof}
By the definition of $M_m(z)$, the triangle inequality, and \Cref{lem:trivial-gamma-bound}, the quantity $|M_m(z)| /  \pfrac{n!}{n^n}^3$
is bounded by
\[
  \sum_{|A| \le m} n^{-|A|} \pfrac{n^{|A|}}{(n)_{|A|}}^3
  \sum_{\supp \psystem = A} 2^{m'(\psystem)} |\mu(\psystem)|\ |z|^{\cx (\psystem)},
\]
where $m'(\psystem)$ is the number of points of $\supp \psystem$ contained in cells of $\pi_1 \vee \pi_2 \vee \pi_3$ of size at least 3.
This exact sum was analyzed in~\cite[Section~4.4]{EMM}, and we showed that it is $O(n^m / (n)_m)^2$
provided $|z|^{1/2} < c/m$.
The proposition follows.
\end{proof}

\begin{corollary}%
  \label{cor:Mmf0-approx}
There is a constant $c>0$ such that, for $m < c n^{1/2}$,
\[
  |M_m - M_m(0)|
  \ll (m^2/n) \pfrac{n!}{n^n}^3.
\]
\end{corollary}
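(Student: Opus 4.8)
The plan is to extract $M_m - M_m(0)$ from $M_m(z)$ by a Cauchy integral and bound it using the growth estimate in Proposition~\ref{prop:major-arcs-estimate}. Recall $M_m(z)$ is a polynomial with $M_m = M_m(1/n)$, and $M_m(0)$ is its constant term. First I would write, for any radius $r > 0$,
\[
    M_m - M_m(0) = \frac{1}{2\pi i} \oint_{|z| = r} M_m(z) \left( \frac{1}{z - 1/n} - \frac{1}{z} \right) dz = \frac{1}{2\pi i} \oint_{|z| = r} M_m(z) \, \frac{1/n}{z(z - 1/n)} \, dz,
\]
valid provided $r > 1/n$ so that both poles $z = 0$ and $z = 1/n$ lie inside the contour. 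I want to choose $r$ as small as the hypothesis of Proposition~\ref{prop:major-arcs-estimate} allows, namely $r^{1/2} = c/m$, i.e.\ $r = c^2/m^2$; the condition $m < c' n^{1/2}$ (for a suitable $c'$ depending on $c$) guarantees $r = c^2/m^2 > 1/n$, so the contour encloses both poles and stays within the region where the bound holds.

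On that circle we have $|M_m(z)| \ll (n^m/(n)_m)^2 (n!/n^n)^3$ by Proposition~\ref{prop:major-arcs-estimate}, and $|z - 1/n| \geq r - 1/n \geq r/2$ (again using $r > 2/n$, absorbing the factor of $2$ into the constant in the constraint on $m$), so that $|1/(z(z-1/n))| \ll 1/r^2$. The length of the contour is $2\pi r$, so
\[
    |M_m - M_m(0)| \ll r \cdot \frac{1}{n} \cdot \frac{1}{r^2} \cdot \pfrac{n^m}{(n)_m}^2 \pfrac{n!}{n^n}^3 = \frac{1}{n r} \pfrac{n^m}{(n)_m}^2 \pfrac{n!}{n^n}^3 \ll \frac{m^2}{n} \pfrac{n^m}{(n)_m}^2 \pfrac{n!}{n^n}^3.
\]
Finally, since $m \ll n^{1/2}$ we have $(n)_m / n^m = \prod_{j=0}^{m-1}(1 - j/n) = 1 + O(m^2/n) = 1 + o(1)$, so $(n^m/(n)_m)^2 = 1 + o(1)$ and can be absorbed, giving $|M_m - M_m(0)| \ll (m^2/n)(n!/n^n)^3$ as claimed.

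The only real subtlety is bookkeeping the constants: the radius $r = c^2/m^2$ is dictated by Proposition~\ref{prop:major-arcs-estimate}, and one must check it comfortably exceeds $2/n$, which is exactly what the hypothesis $m < c n^{1/2}$ (with $c$ small enough relative to the constant in that proposition) delivers. Everything else is the standard Cauchy-estimate argument, and the factor $(n^m/(n)_m)^2$ is harmless in this range. I expect no genuine obstacle here beyond tracking which constant controls which inequality.
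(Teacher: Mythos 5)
Your proof is correct and follows essentially the same route as the paper's: the same Cauchy kernel $\frac{u}{z(z-u)}$ with $u=1/n$, the same contour radius $R \asymp c^2/m^2$ dictated by Proposition~\ref{prop:major-arcs-estimate}, and the same bookkeeping for the poles and the constraint $m < c n^{1/2}$. One small imprecision at the end: under $m < c n^{1/2}$ you only get $(n)_m/n^m = e^{-m(m-1)/(2n) + O(m^3/n^2)}$, which is bounded away from $0$ but not $1+o(1)$ (e.g.\ if $m \sim c n^{1/2}$ then $m^2/n \sim c^2$ is not small); however $(n^m/(n)_m)^2 = O(1)$ is all that your computation actually uses, so the conclusion is unaffected.
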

\begin{proof}
By the residue theorem,
\[
    M_m(u) - M_m(0)
    = \frac1{2\pi i} \oint_{|z|=R} \frac{M_m(z) u}{(z-u)z} \, dz
\]
as long as $|u| < R$.
Hence
\[
    \left|M_m(u) - M_m(0)\right|
    \leq \max_{|z|=R} |M_m(z)| \frac{|u|/R}{1-|u|/R}.
\]
Take $u = 1/n$ and $R = c^2/m^2$, where $c$ is as in the previous proposition. Then as long as $1/n < c^2/m^2$, i.e., $m < cn^{1/2}$, we get
\[
    |M_m - M_m(0)|
    \ll (1+n^{-1/2})^{2m} \pfrac{n^m}{(n)_m}^2 \pfrac{n!}{n^n}^3
    \frac{m^2/n}{1 - m^2/ (c^2 n)}.
\]
Hence as long as say $m < (c/2) n^{1/2}$ we get the claimed bound.
\end{proof}

\subsection{The constant term \texorpdfstring{$M_m(0)$}{M\_m(0)}}

By definition,
\[
  M_m(0)
  = \pfrac{n!}{n^n}^3
  \sum_{\substack{|\supp\psystem| \leq m \\ \cx (\psystem) = 0}}
  \pfrac{n^{|\supp\psystem|}}{(n)_{|\supp \psystem|}}^3
  \mu(\psystem)
  \gamma(\psystem)
  n^{-|\supp \psystem|}.
\]
As remarked, $\cx (\psystem) = 0$ if and only if $\psystem = (\pi, \pi, \pi)$ for some matching $\pi$.
In this case, if say $|\supp \psystem| = 2k$,
\begin{align*}
  &\frac{n^{|\supp \psystem|}}{(n)_{|\supp \psystem|}} = \frac{n^{2k}}{(n)_{2k}}
  = 1 + O(k^2/n),\\
  &\mu(\psystem) = \mu(\pi)^3 = (-1)^k,\\
  &\gamma(\psystem) = (1-1/n)^k.
\end{align*}
The last identity holds by a direct calculation analogous to~\cite[Lemma~4.10]{EMM}.
The number of matchings $\pi$ in $[n]$ of support size $2k$ is
\[
  \frac{(n)_{2k}}{2^k k!} = \frac{n^{2k}}{2^k k!} \bigl(1 + O(k^2/n)\bigr).
\]
Thus
\begin{align*}
  M_m(0)
  &=
  \pfrac{n!}{n^n}^3
  \sum_{k=0}^{\floor{m/2}} \frac{n^{2k}}{2^k k!} (-1)^k n^{-2k} \bigl(1 + O(k^2/n)\bigr)
  \\
  &= \pfrac{n!}{n^n}^3 \bigl(\frakS_m + O(1/n)\bigr)
  .
\end{align*}
By combining with Corollary~\ref{cor:Mmf0-approx} we have
\[
	M_m = \pfrac{n!}{n^n}^3 \bigl(\frakS_m + O(m^2/n)\bigr)
\]
provided $m < c n^{1/2}$.
This finishes the proof of~\eqref{eq:true-major-arcs-goal}.

\section{Sparse minor arcs}%
\label{sec:true-sparse-minor}

To prove \Cref{thm:sparse-minor} we need a bound on $\Lambda(|P_A 1_S|, |P_A 1_S|, |P_A 1_S|)$ for larger $|A|$.
Note that in any latin square $\sL' \subset (X',Y',Z')$,
\begin{align}%
  \nonumber
    |\Lambda(f,g,h)| &= \lvert \EE_{(x,y,z) \in \sL'} f(x) g(y) h(z) \rvert \\
                     &\le \EE_{x \in X'} |f(x)| \left\lvert \EE_{y,z \colon (x,y,z) \in \sL'} g(y) h(z) \right\rvert \le \|f\|_1 \|g\|_2 \|h\|_2   \label{eq:l1-l2-l2}
\end{align}
using the latin square property and Cauchy--Schwarz, and similarly permuting $f,g,h$.
One approach to \Cref{thm:sparse-minor} might be to find upper bounds on $|P_A 1_S(x)|$, pointwise or in $L^1$, and simply apply~\eqref{eq:l1-l2-l2}.
However, by itself this approach is too crude, even assuming optimal upper bounds.

Another idea is to seek a majorant for $|P_A 1_S|$ of the form
\begin{equation}%
  \label{eq:majorant-form}
  |P_A 1_S| \le \sum_{\pi \in \Pi_A} t_{\pi} c_\pi
\end{equation}
for some coefficients $t_{\pi} \ge 0$.
Then
\[
  \Lambda(|P_A 1_S|, |P_A 1_S|, |P_A 1_S|) \le \sum_{\pi_1,\pi_2,\pi_3 \in \Pi_A} t_{\pi_1} t_{\pi_2} t_{\pi_3} \Lambda(c_{\pi_1},c_{\pi_2},c_{\pi_3})
\]
and \Cref{lem:crank-bound}, together with generating function techniques, gives a way to control the right-hand side.
This bound is particularly effective if $\pi_i \in \Pi_A^{(2)}$, given \Cref{claim:cool-crank-value}.

Again this approach does not succeed by itself.
Our final argument works by decomposing $|P_A 1_S|$ into two pieces and combining the two techniques discussed above.

\subsection{A majorant for $|P_A 1_S|$}

\newcommand\sig[1]{\sigma_{#1}^{(\delta)}}

Throughout this section let $C>0$ be some large enough constant, $A \subset [n]$ and $|A| = m \leq n/C$.
Additionally, we let
\[
    \delta := (Cm/n)^{1/2}.
\]
Although we will always have this specific value of $\delta$ in mind, most of the results in this section only rely on $\delta \leq 1$.
The next proposition gives a useful bound for $|P_A 1_S|$.
For $\delta := (Cm/n)^{1/2}$, $r \geq 1$, and $\pi$ a partition define
\begin{align*}
    \sig{r} &=
    \begin{cases}
    \delta &: r = 1, \\
    r - 1 &: r > 1,
    \end{cases} \\
    \sig{\pi} &= \prod_{p \in \pi} \sig{|p|}.
\end{align*}

\begin{proposition}%
    \label{prop:PA1S-physical-bound}
    We have
    \[
        |P_A1_S(x)|
        \leq \frac{n!}{n^n} e^{\delta m}
        \sig{\ker x}
        \qquad
        (x \in X^A).
    \]
\end{proposition}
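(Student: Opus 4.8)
The plan is to start from the closed form in \Cref{prop:PA1S-sparseval-like-expression}. Writing $\pi=\ker x\in\Pi_A$, that proposition gives $|P_A1_S(x)|=\frac{n!}{n^n}\bigl|U\prod_{p\in\pi}(|p|z-1)\bigr|$, so it suffices to bound $\bigl|U\prod_{p\in\pi}(|p|z-1)\bigr|$ by $e^{\delta m}\sig{\pi}$. Exactly as in the proof of the corollary following \Cref{prop:umbral-sparseval}, I would rewrite each factor as $|p|z-1=|p|(z-1)+(|p|-1)$ and multiply out, obtaining
\[
  \prod_{p\in\pi}(|p|z-1)=\sum_{\omega\subseteq\pi}(z-1)^{|\omega|}\prod_{p\in\omega}|p|\prod_{p\in\pi\setminus\omega}(|p|-1),
\]
a nonnegative combination of the polynomials $(z-1)^{|\omega|}$. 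Since $U\bigl((z-1)^j\bigr)\ge 0$ by \Cref{prop:umbral-sparseval}, applying $U$ and the triangle inequality yields
\[
  \Bigl|U\prod_{p\in\pi}(|p|z-1)\Bigr|\le\sum_{\omega\subseteq\pi}U\bigl((z-1)^{|\omega|}\bigr)\prod_{p\in\omega}|p|\prod_{p\in\pi\setminus\omega}(|p|-1).
\]

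Two observations then organize the sum. First, a singleton cell $p$ contributes the factor $|p|-1=0$ unless $p\in\omega$, so only subsets $\omega$ containing every singleton of $\pi$ survive; writing $s$ for the number of singletons, $\pi^+$ for the set of cells of size $\ge 2$, and $\omega'=\omega\cap\pi^+$, the sum collapses to $\sum_{\omega'\subseteq\pi^+}U\bigl((z-1)^{s+|\omega'|}\bigr)\prod_{p\in\omega'}|p|\prod_{p\in\pi^+\setminus\omega'}(|p|-1)$. Second, \Cref{prop:umbral-sparseval} gives $U\bigl((z-1)^j\bigr)\le K_0^j(j/n)^{j/2}$ for an absolute constant $K_0$ and all $0\le j\le n$; this applies here since $s+|\omega'|\le m\le n/C$ (each singleton occupies one point of $A$ and each cell of $\omega'$ at least two, so $s+2|\omega'|\le m$).

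From here it is bookkeeping. Using $|p|\le 2(|p|-1)$ for $|p|\ge 2$ I replace $\prod_{p\in\omega'}|p|\prod_{p\in\pi^+\setminus\omega'}(|p|-1)$ by $2^{|\omega'|}\prod_{p\in\pi^+}(|p|-1)$ and pull $\prod_{p\in\pi^+}(|p|-1)$ out of the sum; using $s+|\omega'|\le m$ and $m/n=\delta^2/C$ I bound $(j/n)^{j/2}\le(\delta/\sqrt C)^{s+|\omega'|}$; and finally $\sum_{\omega'\subseteq\pi^+}(2K_0\delta/\sqrt C)^{|\omega'|}=(1+2K_0\delta/\sqrt C)^{|\pi^+|}\le e^{K_0\delta m/\sqrt C}$ since $|\pi^+|\le m/2$. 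Altogether the bound becomes $\prod_{p\in\pi^+}(|p|-1)\,(K_0\delta/\sqrt C)^s\,e^{K_0\delta m/\sqrt C}$. Since $\sig{\pi}=\delta^s\prod_{p\in\pi^+}(|p|-1)$ by definition, choosing $C\ge K_0^2$ (so that $K_0/\sqrt C\le 1$) turns this into $e^{\delta m}\sig{\pi}$, which is the claimed bound.

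The one step that genuinely needs care ---and the reason the statement is phrased with $\delta=(Cm/n)^{1/2}$ for a large constant $C$--- is this final consolidation: every stray multiplicative constant (the $K_0$ from \Cref{prop:umbral-sparseval}, the $2^{|\omega'|}$, and the slack in $(1+x)^N\le e^{xN}$) has to be absorbed into the factors $e^{\delta m}$ and $\delta^s$, which is possible precisely because $\delta$ can be made as small as we like relative to those constants by enlarging $C$. Everything else is routine.
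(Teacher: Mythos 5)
Your proof is correct and follows essentially the same path as the paper: start from \Cref{prop:PA1S-sparseval-like-expression}, expand $\prod_{p\in\pi}(|p|z-1)$ in powers of $(z-1)$, bound $U\bigl((z-1)^j\bigr)$ via \Cref{prop:umbral-sparseval} (with the constant absorbed by taking $C$ large), and reassemble into $\sig{\ker x}$ times an $e^{O(\delta m)}$ factor. The paper's version is slightly slicker in the bookkeeping---it keeps the sum over $\omega$ factored as $\prod_{p\in\pi}\bigl(|p|-1+|p|\delta\bigr)$ rather than separating singletons and invoking $|p|\le 2(|p|-1)$---but the ideas and estimates are the same.
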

\begin{proof}
    From \Cref{prop:PA1S-sparseval-like-expression},
    \[
        P_A1_S(x) = (-1)^{\rank(\pi)} \frac{n!}{n^n} U\phi,
    \]
    where $\pi = \ker x$ and
    \begin{align*}
        \phi &= \prod_{p \in \pi} (|p| z - 1) = \sum_{\omega \subset \pi} r_\omega (z-1)^{|\omega|},\\
        r_\omega &= \prod_{p \in \pi \setminus \omega} (|p|-1) \prod_{p \in \omega} |p|.
    \end{align*}
    From \Cref{prop:umbral-sparseval} and crude estimates (Stirling's formula), for $0 \leq d \leq m$,
    \[
        U\bigl((z-1)^d\bigr)
        \leq (Cd/n)^{d/2}
        \leq (Cm/n)^{d/2}
        = \delta^{d}
    \]
    provided $C$ is large enough.
    Then
    \begin{align*}
        U\phi
        &\le
        \sum_{\omega \subset \pi}
        r_\omega \delta^{|\omega|} \\
        &= \prod_{p \in \pi} (|p| - 1 + |p| \delta) \\
        &= \sig{\pi} \prod_{p \in \pi : |p| > 1} \br{1 + \frac{|p|}{|p|-1} \delta} \\
        &\leq \sig{\pi} \br{1 + 2 \delta}^{m/2} \\
        &\leq \sig{\pi} e^{\delta m}
    \end{align*}
    as required.
\end{proof}

In light of the proposition, to find majorants for $|P_A 1_S|$ of the form~\eqref{eq:majorant-form} it suffices to find analogous bounds for $\sig\pi$.
Recall that $\Pi_A^{(k)}$ is the set of all $\pi \in \Pi_A$ having no part of size greater than $k$.
Let $r_k(\pi)$ be the number of $k$-cells in $\pi$
and let $r_{3+}(\pi) = \sum_{k \geq 3} r_k(\pi)$.

\begin{lemma}%
    \label{lem:partition-breaking}
    Let $\pi$ be a partition. 
    \begin{enumerate}
        \item
        \[
            \sig\pi \leq \sum \Bigl\{\sig{\pi'} : \pi' \leq \pi, \pi' \in \Pi^{(3)}
            \Bigr\}.
        \]
        \item
        \[
            \sig\pi \leq \sum\Bigl\{ \sig{\pi'} :
            \pi' \leq \pi,
            \pi' \in \Pi^{(4)},
            r_{3+}(\pi') = r_{3+}(\pi)
            \Bigr\}.
        \]
        \item
        \[
            \sig\pi \leq \delta^{-r_{3+}(\pi)} \sum\Bigl\{
            \sig{\pi'} :
            \pi' \leq \pi,
            \pi' \in \Pi^{(2)}
            \Bigr\}.
        \]
    \end{enumerate}
\end{lemma}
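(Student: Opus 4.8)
The plan is to prove all three inequalities by induction on the number of cells of $\pi$ of size $\geq 3$ (for parts (1) and (2)) or size $\geq 2$ (for part (3)), reducing in each step to a single ``breaking'' identity for one oversized cell. Since $\sig{\pi} = \prod_{p \in \pi} \sig{|p|}$ is multiplicative over cells, and refinements of $\pi$ correspond to independently refining each cell, it suffices to establish the inequalities when $\pi = \{A\}$ is a single cell of size $r = |A|$; the general case follows by taking products over cells and using that a product of sums of nonnegative terms is the sum over all tuples of choices. So the real content is the single-cell case.

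For the single-cell case the key is to control $\sig{r}$ by sums of $\sig{\pi'}$ over partitions $\pi'$ of an $r$-element set with restricted cell sizes. The natural tool is the \emph{exponential formula}~\eqref{eq:exponential-formula}: summing $\sig{\pi'}$ over \emph{all} partitions $\pi'$ of $[r]$ gives the coefficient extraction from $\exp(\sum_{k \geq 1} \sig{k} x^k/k!)$, and restricting the allowed cell sizes just truncates the exponent. Concretely, for part (1) I would compare $\sig{r} = r-1$ (for $r \geq 2$; the $r=1$ case is trivial) against $\sum\{\sig{\pi'} : \pi' \in \Pi_{[r]}^{(3)}\}$, which by the exponential formula is $r!$ times the coefficient of $x^r$ in $\exp(\delta x + \tfrac12 x^2 + \tfrac13 x^3)$. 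One checks this coefficient is large enough: even just keeping the partition into a single $2$-cell (if $r=2$) or a $2$-cell plus a $3$-cell and singletons, etc., already dominates $r-1$ since $\sig{2}=1$, $\sig{3}=2$ and the number of such partitions grows. A clean way: keep only partitions consisting of one block of size $2$ or $3$ and the rest singletons — these contribute $\binom{r}{2}\sig2 \cdot \sig1^{r-2} + \binom{r}{3}\sig3\cdot\sig1^{r-3} \geq \binom r2 \delta^{r-2}+2\binom r3\delta^{r-3}$, which I would verify is $\geq r-1 = \sig r$ for all $r \geq 2$ and $\delta \leq 1$ (this is where a small case check for $r=2,3,4$ plus an easy bound for large $r$ is needed). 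Part (2) is the same argument but retaining one block of size up to $4$, with the extra constraint $r_{3+}(\pi')=r_{3+}(\pi)$, i.e.\ the number of blocks of size $\geq 3$ is unchanged — here for a single cell of size $r \geq 3$ we must produce exactly one block of size $\geq 3$ (of size $3$ or $4$) plus smaller blocks, and I'd compare $\sig r = r-1$ against $\sum\{\sig{\pi'}\}$ restricted accordingly. Part (3) is the cheapest: $\sig r = r - 1 \leq \delta^{-1}\binom r2 \delta^{r-1}\cdot\text{(something)}$ — more precisely, the single-cell bound needed is $\sig r \leq \delta^{-1}\sum\{\sig{\pi'} : \pi' \in \Pi_{[r]}^{(2)}\}$, and $\sum\{\sig{\pi'} : \pi' \in \Pi_{[r]}^{(2)}\} \geq \binom r2 \delta^{r-2}\sig1^{r-2}$-type terms; since $r_{3+}(\{A\}) = 1$ when $r \geq 3$ and $0$ when $r \leq 2$, the factor $\delta^{-r_{3+}(\pi)}$ supplies exactly the slack to absorb the loss from breaking a size-$r$ block into doubletons.

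The main obstacle I anticipate is bookkeeping rather than conceptual: verifying the single-cell numerical inequalities uniformly in $r$ and in $\delta \in (0,1]$, where for part (1) small values like $r=2,3,4$ must be checked by hand because the asymptotic comparison is tightest there, while for large $r$ the rapid growth of the partition count (captured by the exponential formula) gives plenty of room. A secondary subtlety is getting the constraint $r_{3+}(\pi') = r_{3+}(\pi)$ in part (2) to pass through the product-over-cells reduction correctly: since $r_{3+}$ is additive over cells, the global constraint is equivalent to the per-cell constraint, so the multiplicative reduction goes through cleanly, but one must state this carefully. Finally, for part (3) one must be slightly careful that $\delta^{-r_{3+}(\pi)}$ with the \emph{global} $r_{3+}(\pi)$ again factors as a product over cells of $\delta^{-r_{3+}(p)}$, which it does by additivity, so the single-cell bound $\sig{|p|} \leq \delta^{-r_{3+}(\{p\})}\sum\{\sig{\pi'} : \pi' \in \Pi_p^{(2)}\}$ multiplies up to the claimed inequality.
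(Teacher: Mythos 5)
The reduction to single cells via multiplicativity is correct and matches the paper (and your note about additivity of $r_{3+}$ is on the right track, although the paper actually has to \emph{strengthen} the constraint in part (2) — requiring exactly one $\geq 3$-block inside each $\geq 3$-cell of $\pi$ — to make the right-hand side multiplicative, rather than the constraint passing through unchanged as you suggest). The exponential-formula framing of the single-cell sum is also fine. The gap is in the concrete lower bound you propose for that sum.

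You suggest keeping only partitions $\pi'$ with exactly one non-singleton block and $r-2$ or $r-3$ singletons, giving the lower bound $\binom r2 \delta^{r-2} + 2\binom r3 \delta^{r-3}$ for part (1). This does \emph{not} dominate $\sig{r} = r-1$ uniformly in $\delta \in (0,1]$: every singleton contributes a factor $\sig{1}=\delta$, so your bound decays like $\delta^{r-3}$, and for fixed $r \geq 4$ it tends to $0$ as $\delta \to 0$, while $r-1$ does not. Concretely, for $r=4$ and $\delta = 0.1$ your bound is $6(0.01) + 8(0.1) = 0.86 < 3 = r-1$. Since $\delta = (Cm/n)^{1/2}$ is typically small in the intended application, this is exactly the regime that matters, so the proposed verification would fail. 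The same flaw affects your sketches of parts (2) and (3), which also rely on partitions with many singletons. The fix, which is what the paper does, is the opposite of your choice: for parts (1) and (2) retain only the $\pi'$ with \emph{no} singletons (so no $\delta$ factors at all), and for part (3) retain only $\pi'$ with at most one singleton; the $\delta^{-1}$ prefactor then exactly cancels the at-most-one remaining $\delta$. The number of such partitions grows fast enough in $r$ that $\sig{r} = r-1$ is dominated, and the remaining check is a $\delta$-free numerical inequality.
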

\begin{proof}
    Consider the first inequality.
    Both sides are multiplicative across cells of $\pi$, so we may assume $\pi$ is a single cell, say of size $r$.
    The inequality is trivial for $r \leq 3$ (since $\sig\pi$ is one of the summands on the right-hand side), so we may assume $r \geq 4$.
    Then it suffices to check
    \[
        r-1 \leq \sum_{\substack{2a + 3b = r}}
        \frac{r!}{2!^a a! 3!^b b!} 2^b.
    \]
    This is a calculation for $r \leq 10$ (say) and an uninteresting exercise for $r > 10$.

    Now consider the second inequality.
    This time, the right-hand side is not itself multiplicative over cells of $\pi$, but if we replace the condition $r_{3+}(\pi') = r_{3+}(\pi)$ by the stronger one
    \[
      \forall p \in \pi,\ |p| \ge 3:\ \text{there is exactly one } p' \in \pi' \text{ with } p' \subseteq p \text{ and } |p'| \ge 3
    \]
    then it becomes so, and it suffices to prove the corresponding stronger inequality.
    Now we may again assume that $\pi$ is an $r$-cell, and we may assume $r \geq 5$.
    Then we must check
    \[
        r-1 \leq \sum_{\substack{2a + 3b + 4c = r \\ b + c = 1}}
        \frac{r!}{2!^a a! 3!^b b! 4!^c c!} 2^b 3^c.
    \]
    Again we omit further details.

    Now consider the third inequality.
    Again it suffices to consider the case of an $r$-cell,
    and we may assume $r \geq 3$.
    Then the assertion is
    \[
        r-1 \leq \delta^{-1} \sum_{a + 2b = r} \frac{r!}{a! 2!^b b!} \delta^a.
    \]
    Since $\delta \leq 1$, it suffices to check
    \[
        r-1 \leq \sum_{\substack{a + 2b = r \\ a \leq 1}} \frac{r!}{a! 2!^b b!},
    \]
    which is again essentially a calculation.
\end{proof}

\begin{lemma}%
    \label{lem:sieve-bound}
    Let $x \in X^A$. Then
    \[
        |P_A1_{S_A}(x)|
        \leq
        \frac{n!}{n^n} e^{\delta m} \sum_{\pi \in \Pi_A^{(3)}} \sig\pi c_\pi(x).
    \]
\end{lemma}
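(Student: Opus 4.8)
The plan is to feed the pointwise estimate of \Cref{prop:PA1S-physical-bound} into the combinatorial majorization of \Cref{lem:partition-breaking}(1), and then recognize the resulting sum over refinements of $\ker x$ as a sum of indicators $c_\pi$. (We prove the bound for $|P_A1_S(x)|$; in view of the factor $n!/n^n$ this is what the statement intends, the subscript $S_A$ notwithstanding.)

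Write $\pi := \ker x \in \Pi_A$. \Cref{prop:PA1S-physical-bound} gives
\[
  |P_A1_S(x)| \le \frac{n!}{n^n}\, e^{\delta m}\, \sig{\pi}.
\]
Applying \Cref{lem:partition-breaking}(1) to $\pi$ to majorize $\sig{\pi}$ by $\sum\{\sig{\pi'} : \pi' \le \pi,\ \pi' \in \Pi^{(3)}\}$, we obtain
\[
  |P_A1_S(x)| \le \frac{n!}{n^n}\, e^{\delta m} \sum_{\substack{\pi' \le \ker x \\ \pi' \in \Pi^{(3)}}} \sig{\pi'}.
\]
Now every refinement $\pi'$ of $\ker x$ is a partition of $A$, hence lies in $\Pi_A^{(3)}$; and, by the defining property of $c_{\pi'}$ recorded just before \Cref{prop:PA1S-sparseval-like-expression}, the condition $\pi' \le \ker x$ is equivalent to $c_{\pi'}(x) = 1$, while $c_\pi(x) = 0$ for every other $\pi \in \Pi_A^{(3)}$. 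Hence the sum over $\{\pi' \le \ker x : \pi' \in \Pi^{(3)}\}$ equals $\sum_{\pi \in \Pi_A^{(3)}} \sig{\pi}\, c_\pi(x)$, which is the claimed inequality.

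There is no real obstacle: the substance is entirely in \Cref{prop:PA1S-physical-bound} and \Cref{lem:partition-breaking}(1), both already proved. The one point worth a sentence is the singleton convention: $\ker x$ is a partition of all of $A$ (a singleton for each value attained exactly once by $x$), and $\sig{(\cdot)}$ is computed throughout with those singletons included, each contributing a factor $\delta$; with this reading, the $\Pi^{(3)}$-refinements of $\ker x$ are exactly the $\pi \in \Pi_A^{(3)}$ with $c_\pi(x) = 1$, so the final translation is exact and no constants are lost.
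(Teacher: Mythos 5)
Your proof is correct and is essentially identical to the paper's: apply Proposition~\ref{prop:PA1S-physical-bound} pointwise, majorize $\sig{\ker x}$ by the sum over $\Pi^{(3)}$-refinements via Lemma~\ref{lem:partition-breaking}(1), and recognize that sum as $\sum_{\pi \in \Pi_A^{(3)}} \sig{\pi} c_\pi(x)$ since $c_\pi(x)=1 \iff \pi \le \ker x$. Your observation that the $S_A$ in the statement is a typo for $S$ (given the $n!/n^n$ prefactor) is also right.
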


\begin{proof}
    By \Cref{prop:PA1S-physical-bound},
    \[
        |P_A1_{S_A}(x)|
        \leq \frac{n!}{n^n} e^{\delta m} \sig{\ker x}.
    \]
    By Lemma~\ref{lem:partition-breaking}(1),
    \[
        \sig{\ker x} \leq \sum\Bigl\{\sig\pi : \pi \leq \ker x, \pi \in \Pi_A^{(3)}\Bigr\}
        = \sum_{\pi \in \Pi_A^{(3)}} \sig\pi c_\pi(x).\qedhere
    \]
\end{proof}
%

\subsection{A splitting of $|P_A 1_S|$}

We can use the bound on $|P_A1_S|$ given in the previous section to bound the $L^1$ norm of $P_A1_S$, but the bound would not be strong enough for what we need.
To go further, we break up $R := |P_A 1_S|$ into two parts, a part whose $L^1$ norm we can control better, and a part we can analyze separately.
Fix $\eps\geq 0$ and let
\[
    \Pi^\sharp = \{ \pi \in \Pi_A : r_{3+}(\pi) < \eps m \}.
\]
Let $\Pi^\flat = \Pi_A \setminus \Pi^\sharp$.
Define
\begin{align*}
    R^{\sharp}(x) &= 1_{\Pi^\sharp}(\ker x) R(x), \\
    R^{\flat}(x) &= 1_{\Pi^\flat}(\ker x) R(x).
\end{align*}
Clearly $R = R^\sharp + R^\flat$.

\begin{lemma}%
    \label{lem:R-pointwise-bounds}
    We have
    \begin{align*}
    R^\flat
    &\leq \frac{n!}{n^n} e^{\delta m}\sum_{\pi \in \Pi^\flat \cap \Pi^{(4)}} \sig\pi c_\pi
    ,\\
    R^\sharp
    &\leq \frac{n!}{n^n} e^{\delta m} \sum_{\pi \in \Pi^\sharp \cap \Pi^{(4)}} \sig\pi c_\pi
    ,
    \\
    R^\sharp
    &\leq \frac{n!}{n^n} e^{\delta m} \delta^{-\eps m} \sum_{\pi \in \Pi^{(2)}} \sig\pi c_\pi
    .
    \end{align*}
\end{lemma}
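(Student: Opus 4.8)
The plan is to derive all three inequalities directly from the pointwise bound $|P_A1_S(x)| \le \frac{n!}{n^n} e^{\delta m}\,\sig{\ker x}$ of \Cref{prop:PA1S-physical-bound}, by feeding $\pi = \ker x$ into the appropriate clause of \Cref{lem:partition-breaking}. Two elementary observations will be used throughout. First, $\sig\pi \ge 0$ for every partition $\pi$, since $\sig 1 = \delta \ge 0$ and $\sig r = r-1 \ge 0$ for $r \ge 1$, so every factor of $\sig\pi = \prod_{p\in\pi}\sig{|p|}$ is nonnegative. Second, $c_{\pi'}(x) = 1$ exactly when $\pi' \le \ker x$; hence for any family $\mathcal F$ of partitions of $A$ one has $\sum\{\sig{\pi'} : \pi' \le \ker x,\ \pi' \in \mathcal F\} = \sum_{\pi \in \mathcal F}\sig\pi\, c_\pi(x)$, and enlarging $\mathcal F$ can only increase this quantity.

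First I would treat $R^\flat$. If $\ker x \notin \Pi^\flat$ then $R^\flat(x) = 0$ while the claimed majorant is a nonnegative combination of the $c_\pi$, so the bound is trivial; thus assume $\ker x \in \Pi^\flat$, i.e., $r_{3+}(\ker x) \ge \eps m$. Applying \Cref{lem:partition-breaking}(2) with $\pi = \ker x$ bounds $\sig{\ker x}$ by a sum of $\sig{\pi'}$ over $\pi' \le \ker x$ with $\pi' \in \Pi^{(4)}$ and $r_{3+}(\pi') = r_{3+}(\ker x)$. Each such $\pi'$ then has $r_{3+}(\pi') \ge \eps m$, so $\pi' \in \Pi^\flat \cap \Pi^{(4)}$, and enlarging the index set to all of $\Pi^\flat \cap \Pi^{(4)}$ and using the second observation gives $R^\flat(x) \le \frac{n!}{n^n} e^{\delta m}\sum_{\pi \in \Pi^\flat \cap \Pi^{(4)}}\sig\pi\, c_\pi(x)$, as required.

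The second bound is the same argument in the complementary regime: $R^\sharp$ vanishes unless $r_{3+}(\ker x) < \eps m$, and then \Cref{lem:partition-breaking}(2) produces only partitions $\pi'$ with $r_{3+}(\pi') = r_{3+}(\ker x) < \eps m$, i.e., $\pi' \in \Pi^\sharp \cap \Pi^{(4)}$. For the third bound I would again assume $r_{3+}(\ker x) < \eps m$ but instead invoke \Cref{lem:partition-breaking}(3), which gives $\sig{\ker x} \le \delta^{-r_{3+}(\ker x)}\sum\{\sig{\pi'} : \pi' \le \ker x,\ \pi' \in \Pi^{(2)}\}$. Since $\delta \le 1$ and $r_{3+}(\ker x) < \eps m$ we have $\delta^{-r_{3+}(\ker x)} \le \delta^{-\eps m}$, and rewriting the sum over $\{\pi' \le \ker x\}$ as $\sum_{\pi \in \Pi^{(2)}}\sig\pi\, c_\pi(x)$ via the second observation completes the proof.

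I do not expect any genuine obstacle: the combinatorial content has already been absorbed into \Cref{prop:PA1S-physical-bound} and \Cref{lem:partition-breaking}, and what remains is bookkeeping. The one place to be slightly careful is recognizing that the hypothesis $r_{3+}(\pi') = r_{3+}(\pi)$ in \Cref{lem:partition-breaking}(2) is exactly what keeps the refinement $\pi'$ on the same side of the $\Pi^\sharp/\Pi^\flat$ split as $\ker x$, so that one lemma handles both $R^\flat$ and $R^\sharp$; and that passing from the sum over $\{\pi' \le \ker x\}$ to a sum over a fixed family of partitions weighted by $c_\pi$ is legitimate precisely because the quantities $\sig\pi$ are nonnegative.
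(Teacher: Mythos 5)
Your proposal is correct and follows exactly the paper's own argument: each bound comes from the pointwise estimate of \Cref{prop:PA1S-physical-bound} together with the appropriate clause of \Cref{lem:partition-breaking} applied to $\pi = \ker x$, with the nonnegativity of $\sig\pi$ and the identity $c_{\pi'}(x)=1 \iff \pi'\le \ker x$ providing the bookkeeping. The only difference is that you spell out these bookkeeping observations explicitly where the paper leaves them implicit.
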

\begin{proof}
    By \Cref{prop:PA1S-physical-bound},
    \[
        R(x) \le \frac{n!}{n^n} e^{\delta m} \sig{\ker x}.
    \]
    Suppose $\ker x \in \Pi^\flat$.
    Then by Lemma~\ref{lem:partition-breaking}(2),
    \[
        \sigma_{\ker x} \leq \sum\{
            \sigma_\pi : \pi \leq \ker x, \pi \in \Pi^{(4)}, r_{3+}(\pi) \geq \eps m
        \}.
    \]
    This proves the bound on $R^\flat$.
    The first bound on $R^\sharp$ is proved identically.
    The second is proved in the same way using instead Lemma~\ref{lem:partition-breaking}(3).
\end{proof}

\begin{corollary}%
    \label{cor:rflat-l1-bound}
    We have
    \[
        \| R^\flat \|_1
        \ll \frac{n!}{n^n} e^{O(m)} (m/n)^{(1 + \eps)m/2}.
    \]
\end{corollary}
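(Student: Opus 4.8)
The plan is to integrate the first pointwise estimate of \Cref{lem:R-pointwise-bounds}. Since $R^\flat \ge 0$,
\[
  \|R^\flat\|_1 \le \frac{n!}{n^n} e^{\delta m} \sum_{\pi \in \Pi^\flat \cap \Pi^{(4)}} \sig\pi\, \|c_\pi\|_1,
\]
and $\|c_\pi\|_1 = \EE_{x \in X^A} c_\pi(x) = n^{-\rank(\pi)}$, as exactly $n^{|\pi|}$ of the $n^m$ maps $A \to X$ are $\pi$-measurable. Because $m \le n/C$ we have $\delta = (Cm/n)^{1/2} \le 1$, so $e^{\delta m} = e^{O(m)}$, and it remains to show
\[
  \sum_{\pi \in \Pi^\flat \cap \Pi^{(4)}} \sig\pi\, n^{-\rank(\pi)} \ll e^{O(m)} (m/n)^{(1+\eps)m/2}.
\]
Writing $r_k = r_k(\pi)$, for $\pi \in \Pi^{(4)}$ one has $\sig\pi = \delta^{r_1} 2^{r_3} 3^{r_4}$ and $\rank(\pi) = r_2 + 2r_3 + 3r_4$, while $\pi \in \Pi^\flat$ means $r_3 + r_4 \ge \eps m$.

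First I would discard the constraint $r_3 + r_4 \ge \eps m$ by multiplying each summand by $\lambda^{r_3 + r_4 - \eps m} \ge 1$, valid for every $\lambda \ge 1$, obtaining
\[
  \sum_{\pi \in \Pi^\flat \cap \Pi^{(4)}} \sig\pi\, n^{-\rank(\pi)} \le \lambda^{-\eps m} \sum_{\pi \in \Pi_A^{(4)}} \sig\pi\, n^{-\rank(\pi)} \lambda^{r_3(\pi) + r_4(\pi)}.
\]
The unrestricted sum is tailor-made for the exponential formula \eqref{eq:exponential-formula}: taking $x_1 = \delta$, $x_2 = n^{-1}$, $x_3 = 2\lambda n^{-2}$, $x_4 = 3\lambda n^{-3}$ and $x_k = 0$ for $k \ge 5$, and inserting a bookkeeping variable $y$ marking the total support size, each $\pi$ contributes $\prod_p x_{|p|} = \sig\pi\, n^{-\rank(\pi)}\lambda^{r_3+r_4}$, so
\[
  \sum_{m \ge 0} \frac{y^m}{m!} \sum_{\pi \in \Pi_m^{(4)}} \sig\pi\, n^{-\rank(\pi)} \lambda^{r_3 + r_4}
  = \exp\!\Bigl(\delta y + \tfrac{y^2}{2n} + \tfrac{\lambda y^3}{3n^2} + \tfrac{\lambda y^4}{8n^3}\Bigr) =: G(y),
\]
whence $\sum_{\pi \in \Pi_m^{(4)}} \sig\pi\, n^{-\rank(\pi)} \lambda^{r_3 + r_4} = m!\,[y^m] G(y) \le m!\, y^{-m} G(y)$ for every $y > 0$.

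Finally I would take $y = (mn)^{1/2}$ and $\lambda = (n/m)^{1/2}$ (note $\lambda \ge 1$ since $m < n$). Each term of the exponent of $G(y)$ is then $O(m)$ --- explicitly $\delta y = C^{1/2} m$, $\ y^2/(2n) = m/2$, $\ \lambda y^3/(3n^2) = m/3$, and $\lambda y^4/(8n^3) = (m/8)(m/n)^{1/2} \le m/(8 C^{1/2})$ --- so $G(y) = e^{O(m)}$; by Stirling $m!\, y^{-m} = m!\,(mn)^{-m/2} = e^{O(m)} (m/n)^{m/2}$; and $\lambda^{-\eps m} = (m/n)^{\eps m/2}$. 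Multiplying these three bounds and reinstating the factor $e^{\delta m} = e^{O(m)}$ yields $\|R^\flat\|_1 \ll \tfrac{n!}{n^n} e^{O(m)} (m/n)^{(1+\eps)m/2}$. The argument is essentially mechanical; its only moving parts are the majorant already in hand, the standard $\lambda$-multiplier trick (which converts the purely combinatorial surplus $r_3 + r_4 \ge \eps m$ into the analytic gain $(m/n)^{\eps m/2}$), and the coefficient bound $[y^m]G \le y^{-m}G$ evaluated at the near-optimal $y \asymp (mn)^{1/2}$. The one thing to watch is that all error terms genuinely lie in $e^{O(m)}$, which is exactly what the hypothesis $m \le n/C$ guarantees; I do not expect a real obstacle.
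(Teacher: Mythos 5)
Your proof is correct and follows essentially the same route as the paper's: same first reduction via the $R^\flat$ majorant and $\|c_\pi\|_1 = n^{-\rank(\pi)}$, same extra "bookkeeping" weight (your $\lambda$ is the paper's $w$) to convert the constraint $r_{3+}(\pi) \ge \eps m$ into an analytic gain, the same application of the exponential formula giving $\exp(\delta y + y^2/2n + \lambda y^3/3n^2 + \lambda y^4/8n^3)$, the same coefficient bound $[y^m]G \le y^{-m}G(y)$, and the same substitutions $y = (mn)^{1/2}$, $\lambda = (n/m)^{1/2}$. The only cosmetic difference is that you state the identity $\sig\pi = \delta^{r_1} 2^{r_3} 3^{r_4}$, $\rank(\pi) = r_2 + 2r_3 + 3r_4$ explicitly and carry the generating variable $y$ externally, whereas the paper folds $x$, $y$, $w$ into a single substitution for the $x_k$; the arithmetic is identical.
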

\begin{proof}
    Using the previous lemma, $\delta \le 1$ and $\|c_\pi\|_1 = n^{-\rank(\pi)}$,
    \[
        \|R^\flat\|_1 \leq \frac{n!}{n^n} e^{m} \sum_{\pi \in \Pi^\flat \cap \Pi^{(4)}} \sig\pi n^{-\rank(\pi)}.
    \]
    Let
    \[
        \alpha_r(x, w) = \sum_{\pi \in \Pi_r^{(4)}} \sig\pi x^{\rank(\pi)} w^{r_{3+}(\pi)}.
    \]
    Then, for real $w\geq 1$,
    \[
        \sum_{\pi \in \Pi^\flat \cap \Pi^{(4)}} \sig\pi n^{-\rank(\pi)} \leq w^{-\eps m} \alpha_m(1/n, w).
    \]
    Using the exponential formula~\eqref{eq:exponential-formula} with $x_k = \sig k x^{k-1} y^k$ for $k=1,2$, $x_k = \sig k w x^{k-1} y^k$ for $k=3,4$, and $x_k=0$ for $k\geq 5$, we obtain
    \[
        \sum_{r\geq 0} \frac1{r!} \alpha_r(x, w) y^r
    = \exp(\delta y + xy^2/2 + w x^2 y^3 / 3 + w x^3 y^4 / 8).
    \]
    Hence for real $y>0$,
    \[
        w^{-\eps m} \alpha_m(x, w)
        \leq
        \frac{m!}{w^{\eps m} y^m} \exp(\delta y + xy^2/2 + wx^2 y^3/3 + wx^3 y^4/8).
    \]
    Putting $x = 1/n$, $y = (mn)^{1/2}$, and $w = (n/m)^{1/2}$, we get
    \[
        w^{-\eps m} \alpha_m(1/n, w)
        \leq
        \frac{m!}{(n/m)^{\eps m / 2}(mn)^{m/2}} e^{O(m)}.
    \]
    This proves what we want.
\end{proof}

\begin{corollary}%
    \label{cor:R-telescope}
    We have
    \[
        \Lambda(R,R,R) \le \Lambda(R^\sharp, R^\sharp, R^\sharp)
        + \pfrac{n!}{n^n}^3
        O(1)^m (m/n)^{(1 + \eps/2) m}
        .
    \]
\end{corollary}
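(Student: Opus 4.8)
The plan is to expand $\Lambda(R,R,R)$ by trilinearity along the decomposition $R = R^\sharp + R^\flat$ into the eight terms $\Lambda(R^a,R^b,R^c)$ with $a,b,c \in \{\sharp,\flat\}$. All of $R$, $R^\sharp$, $R^\flat$ are nonnegative (each is a product of the nonnegative function $R = |P_A1_S|$ with an indicator of a condition on $\ker x$), and $\Lambda$ of nonnegative inputs is nonnegative, so every one of the eight terms is nonnegative. One of them is exactly $\Lambda(R^\sharp,R^\sharp,R^\sharp)$, which we keep; the task reduces to absorbing the other seven terms into the error.

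Each of the remaining seven terms has at least one factor equal to $R^\flat$. For such a term I would apply the $L^1$--$L^2$--$L^2$ inequality~\eqref{eq:l1-l2-l2}, placing an $R^\flat$ factor in the $L^1$ slot and the other two factors in the two $L^2$ slots. Since $R^\sharp \le R$ and $R^\flat \le R$ pointwise, the two $L^2$ factors each have $L^2$ norm at most $\|R\|_2 = \|P_A1_S\|_2$, so the term is $\le \|R^\flat\|_1 \, \|R\|_2^2$.

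Now I would feed in the two estimates already at hand: \Cref{cor:rflat-l1-bound} gives $\|R^\flat\|_1 \ll \frac{n!}{n^n} e^{O(m)} (m/n)^{(1+\eps)m/2}$, while \Cref{lem:L2-to-sparseval} together with \Cref{prop:umbral-sparseval} gives $\|R\|_2^2 = \pfrac{n!}{n^n}^2 U\bigl((z-1)^m\bigr) \le \pfrac{n!}{n^n}^2 e^{O(m)} (m/n)^{m/2}$. Multiplying these and adding the exponents of $m/n$, namely $(1+\eps)m/2 + m/2 = (1+\eps/2)m$, each bad term is at most $\pfrac{n!}{n^n}^3 O(1)^m (m/n)^{(1+\eps/2)m}$; summing the (at most seven) such terms only changes the implied constant absorbed into $O(1)^m$, giving the stated bound.

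There is essentially no obstacle here: the real content lives in the previously established \Cref{cor:rflat-l1-bound} and \Cref{prop:umbral-sparseval}. The only points needing a moment's care are (i) confirming monotonicity of $\Lambda$ on nonnegative inputs, so that the cross terms can be estimated one at a time and simply discarded, and (ii) the bookkeeping that the two exponents of $m/n$ combine to exactly $(1+\eps/2)m$ --- in particular that $\eps$ enters only through $\|R^\flat\|_1$ and not through $\|R\|_2$.
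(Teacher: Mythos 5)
Your proof is correct and is essentially the paper's argument. The paper organizes the decomposition telescopically into four terms, $\Lambda(R,R,R)=\Lambda(R^\flat,R,R)+\Lambda(R^\sharp,R^\flat,R)+\Lambda(R^\sharp,R^\sharp,R^\flat)+\Lambda(R^\sharp,R^\sharp,R^\sharp)$, rather than fully expanding into eight, but the key step --- bounding each cross term via~\eqref{eq:l1-l2-l2} by $\|R^\flat\|_1\|R\|_2^2$ and then invoking \Cref{cor:rflat-l1-bound} and sparseval --- is the same, and the difference in the number of cross terms is absorbed into the $O(1)^m$.
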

\begin{proof}
    We have $\|R^\sharp\|_2 \leq \|R\|_2$ since $0 \leq R^\sharp \leq R$ pointwise.
    Hence, from~\eqref{eq:l1-l2-l2},
    \begin{align*}
    \Lambda(R,R,R) &= \Lambda(R^\flat, R, R) + \Lambda(R^\sharp, R^\flat, R) + \Lambda(R^\sharp, R^\sharp, R^\flat) + \Lambda(R^\sharp, R^\sharp, R^\sharp) \\
    &\le \Lambda(R^\sharp, R^\sharp, R^\sharp) + 3 \|R\|_2^2 \|R^\flat\|_1.
    \end{align*}
    From sparseval (\Cref{lem:L2-to-sparseval} and \Cref{prop:umbral-sparseval}),
    \[
        \|R\|_2^2 \ll \pfrac{n!}{n^n}^2 e^{O(m)} (m/n)^{m/2}.
    \]
    Combining with \Cref{cor:rflat-l1-bound} gives the bound.
\end{proof}

\subsection{The contribution from $R^\sharp$}

Finally we must bound $\Lambda(R^\sharp, R^\sharp, R^\sharp)$.
From Lemma~\ref{lem:R-pointwise-bounds},
\begin{equation}
    \label{eq:R_sharp<Q}
    R^\sharp \leq \frac{n!}{n^n} e^{\delta m} \delta^{-\eps m} Q
    \leq \frac{n!}{n^n} e^{O(m)} (m/n)^{-\eps m/2} Q,
\end{equation}
where
\[
    Q = \sum_{\pi \in \Pi^{(2)}} \sig\pi c_\pi.
\]
Hence it suffices to bound $\Lambda(Q,Q,Q)$.
The key ingredient for this is the knowledge of the exact value of combinatorial rank for $\pi_1,\pi_2,\pi_3 \in \Pi^{(2)}$ (\Cref{claim:cool-crank-value}).

\def\matchings{\mathcal{M}}

\begin{lemma}%
    \label{lem:LambdaQQQ}
    \[
        \Lambda(Q, Q, Q)
        \le (m/n)^{3m/2} e^{O(m + n/m)}.
    \]
\end{lemma}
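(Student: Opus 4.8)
The plan is to expand $\Lambda(Q,Q,Q)$ in the basis of the $c_\pi$, estimate each term using \Cref{lem:crank-bound} together with the exact formula for $\crank$ on $\Pi^{(2)}$-triples (\Cref{claim:cool-crank-value}), and then evaluate the resulting weighted sum over triples of matchings by the exponential formula.

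First I would write
\[
  \Lambda(Q,Q,Q) = \sum_{\pi_1,\pi_2,\pi_3\in\Pi^{(2)}_A}\sig{\pi_1}\sig{\pi_2}\sig{\pi_3}\,\Lambda(c_{\pi_1},c_{\pi_2},c_{\pi_3}),
\]
a sum of nonnegative terms. For $\pi\in\Pi^{(2)}_A$ every cell is a singleton or a pair, so $\sig{\pi}=\delta^{\,m-2\rank\pi}$; and by \Cref{lem:crank-bound} and \Cref{claim:cool-crank-value},
\[
  0\le \Lambda(c_{\pi_1},c_{\pi_2},c_{\pi_3})\le n^{-\crank(\pi_1,\pi_2,\pi_3)} = n^{\,k(\psystem)-R(\psystem)},
\]
where $R(\psystem)=\rank\pi_1+\rank\pi_2+\rank\pi_3$ and $k(\psystem)$ is the number of cells $p$ of $\pi_1\vee\pi_2\vee\pi_3$ on which all three $\pi_i$ restrict to perfect matchings. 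Since $\delta^2 n=Cm$, this yields
\[
  \Lambda(Q,Q,Q)\le \delta^{3m}\sum_{\pi_1,\pi_2,\pi_3\in\Pi^{(2)}_A}(Cm)^{-R(\psystem)}\,n^{k(\psystem)} \;=:\; \delta^{3m}\,T .
\]
As $\delta^{3m}=(Cm/n)^{3m/2}=e^{O(m)}(m/n)^{3m/2}$, it suffices to show $T\le e^{O(m+n/m)}$.

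Next I would view a triple $\psystem\in(\Pi^{(2)}_A)^3$ as a triple $(M_1,M_2,M_3)$ of partial matchings on the $m$-element set $A$, equivalently a multigraph on $A$ with $3$-coloured edges and maximum degree $3$: then $R(\psystem)$ is the total number of edges, and $k(\psystem)$ is the number of connected components that are $3$-regular (on such a component each $M_i$ is perfect, and conversely). The weight $(Cm)^{-R}n^{k}$ is multiplicative over connected components of the union multigraph, an isolated vertex contributing $1$; so by the exponential formula,
\[
  \sum_{m\ge0}\frac{T_m}{m!}\,y^m = \exp\!\Bigl(y+B(y)+n\,G(y)\Bigr),
\]
where $T_m$ denotes $T$ for $|A|=m$, $B(y)=\sum_{\ell\ge2}\frac{b_\ell}{\ell!}y^\ell$ with $b_\ell$ the $(Cm)^{-(\cdot)}$-weighted count of connected spanning non-$3$-regular configurations on $[\ell]$, and $G(y)=\sum_{\ell\ge1}\frac{g_{2\ell}}{(2\ell)!}y^{2\ell}$ with $g_{2\ell}=(Cm)^{-3\ell}\cdot\#\{\text{triples of perfect matchings of }[2\ell]\text{ with connected union}\}$, the factor $n$ being the lone $n$ contributed by each good component. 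Hence for every $y>0$, $T=T_m\le m!\,y^{-m}\exp\!\bigl(y+B(y)+nG(y)\bigr)$, and I would take $y=m$, so that $m!\,y^{-m}\le1$ and $e^{y}=e^{m}$.

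It then remains to prove $B(m)=O(m)$ and $nG(m)=O(n/m)$, and both are routine generating-function estimates. For $B$: a connected spanning configuration on $[\ell]$ has between $\ell-1$ and $3\ell/2$ edges, and the number of colour-labelled partial-matching triples on $[\ell]$ with $E$ edges in total is at most $3^E\ell^{2E}/(2^E E!)$ (bound each matching by $\ell^{2j_i}/(2^{j_i}j_i!)$ and sum the multinomial), so $b_\ell\le\sum_{\ell-1\le E\le 3\ell/2}(3\ell^2/(2Cm))^E/E!$; since $\ell\le m$ we have $3\ell^2/(2Cm)<\ell-1$ once $C$ is large, so this sum is dominated by $E=\ell-1$, and Stirling gives $\frac{b_\ell}{\ell!}m^\ell\ll m(3e^2/(2C))^{\ell-1}$, which sums over $\ell\ge2$ to $O(m)$ for $C$ large. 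For $G$: bounding the number of perfect matchings of $[2\ell]$ by $(2\ell)!/(2^\ell\ell!)$ and applying Stirling gives $\frac{g_{2\ell}}{(2\ell)!}m^{2\ell}\ll \ell\bigl(C_1\ell/(C^3m)\bigr)^\ell$ for an absolute constant $C_1$; since $\ell\le m/2$ the base is at most $C_1/(2C^3)<1/2$ for $C$ large, so the series is dominated by its $\ell=1$ term, whence $G(m)=O(1/m)$ and $nG(m)=O(n/m)$. Combining, $T\le\exp(m+O(m)+O(n/m))=e^{O(m+n/m)}$, as needed. The one genuinely delicate point is this last bound on $G$: since a good component carries a factor $n$, one must exploit that fully-matched components are rare, and in particular that the series defining $G(m)$ is dominated by its smallest term — the naive bound $n^{k}\le n^{m/2}$ would be hopelessly lossy.
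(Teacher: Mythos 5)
Your proof is correct and follows essentially the same strategy as the paper's: expand $\Lambda(Q,Q,Q)$ in the $c_\pi$ basis, bound each term via $\crank$ and \Cref{claim:cool-crank-value}, use multiplicativity of the weight over the cells of $\pi_1\vee\pi_2\vee\pi_3$, split according to whether a cell is $3$-regular, and close with the exponential formula. The differences are only in bookkeeping — you pull $\delta^{3m}$ out at the start and sum over the number of edges $E\geq\ell-1$ directly, whereas the paper keeps the $\sigma^{(\delta)}$-weights inside $\alpha_r$, introduces auxiliary parameters $w_r\geq1$ to encode the connectivity constraint $\rank\pi_1+\rank\pi_2+\rank\pi_3\geq r-1$, and harvests $(m/n)^{3m/2}$ from the factor $m!/y^m$ — and, as the paper does explicitly, you should truncate $B(y)$ and $G(y)$ at $\ell\leq m$ so they are finite (harmless, since only the coefficient of $y^m$ matters).
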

\begin{proof}
    Let $\matchings_A \subset \Pi_A^{(2)}$ be the set of matchings (partitions all of whose cells have size 2).
    For $\pi_1,\pi_2,\pi_3 \in \Pi^{(2)}_A$,
    let $k(\pi_1, \pi_2, \pi_2)$ be the number of cells $p \in \pi_1 \vee \pi_2 \vee \pi_3$ such that $\pi_i|_p \in \matchings_p$ for each $i \in [3]$.
    Then, from \Cref{lem:crank-bound} and \Cref{claim:cool-crank-value},
    \[
        \Lambda(c_{\pi_1}, c_{\pi_2}, c_{\pi_3})
        \leq n^{k(\pi_1, \pi_2, \pi_3) - \rank(\pi_1)-\rank(\pi_2)-\rank(\pi_3)}.
    \]
    Hence
    \begin{align*}
        \Lambda(Q,Q,Q)
        &=
        \sum_{\pi_1, \pi_2, \pi_3 \in \Pi^{(2)}_A}
        \sig{\pi_1} \sig{\pi_2} \sig{\pi_3}
        \Lambda(c_{\pi_1}, c_{\pi_2}, c_{\pi_3})
        \\&\leq
        \sum_{\pi_1, \pi_2, \pi_3 \in \Pi^{(2)}_A}
        \sig{\pi_1} \sig{\pi_2} \sig{\pi_3} n^{k(\pi_1, \pi_2, \pi_3) - \rank(\pi_1) - \rank(\pi_2) - \rank(\pi_3)}
        \\&=
        \sum_{\pi \in \Pi_A}
        \prod_{p \in \pi}
        \sum_{\substack{
                \pi_1, \pi_2, \pi_3 \in \Pi^{(2)}_p \\
                \pi_1 \vee \pi_2 \vee \pi_3 = \{p\}
        }}
        n^{k(\pi_1, \pi_2, \pi_3)}
        \prod_{i \in [3]} \sig{\pi_i} n^{-{\rank(\pi_i)}}
        .
    \end{align*}
    In the last sum above, since $\pi_1 \vee \pi_2 \vee \pi_3 = \{p\}$, $k(\pi_1, \pi_2, \pi_3)$ is $0$ or $1$ according to whether $\pi_1, \pi_2, \pi_3 \in \matchings_p$.
    Splitting the sum according to these cases,
    \begin{align*}
        \Lambda(Q,Q,Q)
        &\leq
        \sum_{\pi \in \Pi_A}
        \prod_{p \in \pi}
        \bigg(
        \sum_{\substack{
                \pi_1, \pi_2, \pi_3 \in \Pi^{(2)}_p \\
                \pi_1 \vee \pi_2 \vee \pi_3 = \{p\}
        }}
        \prod_{i \in [3]} \sig{\pi_i} n^{-{\rank(\pi_i)}}
        \\&\qquad+
        \sum_{\substack{
                \pi_1, \pi_2, \pi_3 \in \matchings_p \\
                \pi_1 \vee \pi_2 \vee \pi_3 = \{p\}
        }}
        n
        \prod_{i \in [3]} \sig{\pi_i} n^{-{\rank(\pi_i)}}
        \bigg)
        .
    \end{align*}
    In the second sum we will ignore the constraint $\pi_1\vee \pi_2 \vee \pi_3 = \{p\}$;
    in the first sum we will use only $\rank(\pi_1) + \rank(\pi_2) + \rank(\pi_3) \geq \rank( \pi_1\vee \pi_2 \vee \pi_3 ) = |p|-1$.

    Fix parameters $w_r \geq 1$ for all $r \geq 1$.
    Define
    \begin{align*}
        \alpha_r(x) &= \sum_{\pi \in \Pi_r^{(2)}} \sig\pi x^{\rank(\pi)}
        ,
        \\
        \alpha_r'(x) &= \sum_{\pi \in \matchings_r} \sig\pi
        x^{\rank(\pi)}
        = |\matchings_r| x^{r/2}
        ,
        \\
        \beta_r(x) &= \sum_{\pi \in \Pi_r} \prod_{p \in \pi}
        \br{
            w_{|p|}^{-(|p|-1)} \alpha_{|p|}(w_{|p|}x)^3
            + x^{-1} \alpha'_{|p|}(x)^3
        }
        .
    \end{align*}
    Then, by the discussion above,
    \[
        \Lambda(Q,Q,Q) \leq \beta_m(1/n).
    \]
    Three applications of the exponential formula~\eqref{eq:exponential-formula}
    give
    \begin{align}
        \sum_{r \geq 0} \frac{y^r}{r!} \alpha_r(x)
        &= \exp(\delta y + x y^2 / 2),
        \label{eq:ef-alpha}
        \\
        \sum_{r \geq 0} \frac{y^r}{r!} \alpha_r'(x)
        &= \exp(x y^2 / 2),
        \label{eq:ef-alpha'}
        \\
        \sum_{r \geq 0} \frac{y^r}{r!} \beta_r(x)
        &= \exp \br{
            \sum_{r \geq 1} \frac{y^r w_r^{-r+1}\alpha_r(w_r x)^3}{r!}
            + \sum_{r\geq 2~\text{even}} \frac{y^r x^{-1} \alpha_r'(x)^3}{r!}
        }
        \label{eq:ef-beta}
    .
    \end{align}
    From~\eqref{eq:ef-alpha}, for real $y>0$,
    \[
        \alpha_r(x) \leq \frac{r!}{y^r}
        \exp(\delta y + xy^2/2).
    \]
    Replacing $x$ with $w_r x$, putting $w_r = \delta^2 / (xr)$ (we will ensure later that $w_r \geq 1$ for $1 \leq r \leq m$) and $y = r / \delta$ gives
    \[
        w_r^{-r+1} \alpha_r(w_r x)^3 \leq
        e^{O(r)}
        r^{r}
        \delta^{r+2}
        x^{r-1}
        .
    \]
    From~\eqref{eq:ef-alpha'} with $y = (r/x)^{1/2}$ we have
    \[
        \alpha'_r(x) \leq \frac{r!}{y^r} \exp(xy^2/2) \asymp r^{1/2} (rx/e)^{r/2}
    \]
    (alternatively, this follows directly from $\alpha'_r(x) = |\matchings_r| x^{r/2}$).
    Hence, from~\eqref{eq:ef-beta} for $x, y > 0$,
    \begin{equation}
        \label{eq:beta_m-bound}
        \beta_m(x) \leq \frac{m!}{y^m} \exp b(x, y),
    \end{equation}
    where $b$ is the truncated sum
    \begin{align*}
        b(x, y)
        &=
            \sum_{r=1}^m \frac{y^r w_r^{-r+1} \alpha_r(w_r x)^3}{r!}
            + \sum_{r=2}^m \frac{y^r x^{-1} \alpha_r'(x)^3}{r!}
        \\
        &\ll
            \sum_{r=1}^m e^{O(r)} \delta^{r+2} x^{r-1} y^r
            + \sum_{r=2}^m r^{O(1)} (e^{-1/2} r^{1/2}x^{3/2} y)^r x^{-1}
        .
    \end{align*}
    Inserting $x = 1/n$ and $\delta = (Cm/n)^{1/2}$,
    \[
        b(1/n, y)
        \ll
        \sum_{r=1}^m O(m^{1/2} y /n^{3/2} )^r m
        + \sum_{r=2}^m r^{O(1)} (e^{-1/2} r^{1/2}y / n^{3/2})^r n
        .
    \]
    Note that $w_r = Cm/r$, and this is indeed at least 1 for $r \leq m$ since we may assume $C \geq 1$.
    Finally, let $y = c n^{3/2} / m^{1/2}$ for a sufficiently small constant $c > 0$.
    Then
    \[
        b(1/n, y) \ll m + n/m.
    \]
    Hence, from~\eqref{eq:beta_m-bound},
    \[
        \Lambda(Q,Q,Q) \leq \beta_m(1/n) \leq \frac{m!}{y^m} \exp b(1/n, y) \ll (m/n)^{3m/2} e^{O(m + n/m)},
    \]
    as claimed.
\end{proof}

Putting the last few results together, we have the following theorem, which clearly implies \Cref{thm:sparse-minor}.

\begin{theorem}
    We have
    \[
        \Lambda(|P_A1_S|, |P_A1_S|, |P_A1_S||)
        \leq \pfrac{n!}{n^n}^3 (m/n)^{9m/8} e^{O(m + n/m)}.
    \]
\end{theorem}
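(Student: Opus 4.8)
The plan is to simply assemble the three main inputs of this section — \Cref{cor:R-telescope}, the pointwise bound~\eqref{eq:R_sharp<Q}, and \Cref{lem:LambdaQQQ} — and then optimize the free parameter $\eps$ in the splitting $R = R^\sharp + R^\flat$. Writing $R = |P_A1_S|$ as before, \Cref{cor:R-telescope} reduces the problem to bounding $\Lambda(R^\sharp, R^\sharp, R^\sharp)$, up to an additive error of size $\pfrac{n!}{n^n}^3 O(1)^m (m/n)^{(1+\eps/2)m}$.

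For the main term, I would combine~\eqref{eq:R_sharp<Q}, which says $R^\sharp \leq \frac{n!}{n^n} e^{O(m)} (m/n)^{-\eps m/2} Q$ pointwise, with monotonicity of $\Lambda$ in each (nonnegative) argument and with \Cref{lem:LambdaQQQ}. Cubing the pointwise bound and substituting $\Lambda(Q,Q,Q) \le (m/n)^{3m/2} e^{O(m+n/m)}$ yields
\[
  \Lambda(R^\sharp, R^\sharp, R^\sharp)
  \leq \pfrac{n!}{n^n}^3 e^{O(m+n/m)} (m/n)^{3(1-\eps)m/2}.
\]
Together with the error term from \Cref{cor:R-telescope} (absorbing $O(1)^m$ into $e^{O(m+n/m)}$), this gives
\[
  \Lambda(R,R,R)
  \leq \pfrac{n!}{n^n}^3 e^{O(m+n/m)} \Bigl( (m/n)^{3(1-\eps)m/2} + (m/n)^{(1+\eps/2)m} \Bigr).
\]

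It remains to choose $\eps \ge 0$ to balance the two exponents $\tfrac{3}{2}(1-\eps)$ and $1 + \tfrac{\eps}{2}$. These are equal precisely when $\eps = 1/4$, with common value $9/8$. Since we are in the regime $m \le n/C$, so $m/n \le 1$, both terms are then at most $(m/n)^{9m/8}$, and the theorem follows, recalling $R = |P_A1_S|$. There is essentially no obstacle left at this stage: all the difficulty was already handled in \Cref{lem:LambdaQQQ} (the combinatorial rank input via \Cref{claim:cool-crank-value}) and in the $L^1$ control of $R^\flat$ in \Cref{cor:rflat-l1-bound}; the only point requiring any thought here is the optimal choice $\eps = 1/4$ of the split parameter.
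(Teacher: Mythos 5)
Your proposal is correct and follows exactly the same route as the paper's proof: apply \Cref{cor:R-telescope} to isolate $\Lambda(R^\sharp,R^\sharp,R^\sharp)$, bound that via~\eqref{eq:R_sharp<Q} and \Cref{lem:LambdaQQQ}, then equalize the exponents $\tfrac{3}{2}(1-\eps)$ and $1+\tfrac{\eps}{2}$ at $\eps=1/4$. Nothing is missing.
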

\begin{proof}
    From Corollary~\ref{cor:R-telescope},
    \[
        \Lambda(R, R, R)
        \leq \Lambda(R^\sharp, R^\sharp, R^\sharp)
        + \pfrac{n!}{n^n}^3 e^{O(m)} (m/n)^{(1 + \eps/2)m}.
    \]
    By~\eqref{eq:R_sharp<Q} and the previous lemma the main term is
    \begin{align*}
        \Lambda(R^\sharp, R^\sharp, R^\sharp)
        &\leq \pfrac{n!}{n^n}^3 e^{O(m)} (m/n)^{-3\eps m/2} \Lambda(Q, Q, Q) \\
        &\leq \pfrac{n!}{n^n}^3 (m/n)^{(1 - \eps)3m/2} e^{O(m + n/m)}.
    \end{align*}
    Set $\eps = 1/4$.
\end{proof}
%

\section{Dense minor arcs}%
\label{sec:dense-minor}

\begin{figure}
    \centering
    \begin{tikzpicture}[scale=0.5]
      \def\er{0.5}
      \def\ex{4}
      \def\ey{3}

      \draw[fill=gray!20] (-\ex,\ey) -- (\ex,\ey) -- (0,0) -- cycle;
      \draw[fill=gray!20] (-\ex,-\ey) -- (\ex,-\ey) -- (0,0) -- cycle;

      \draw[fill=red!20] (-\ex,\ey) node{$x$} circle (\er);
      \draw[fill=green!20] (\ex,\ey) node{$y'$} circle (\er);

      \draw[fill=green!20] (-\ex,-\ey) node{$y$} circle (\er);
      \draw[fill=red!20] (\ex,-\ey) node{$x'$} circle (\er);

      \draw[fill=blue!20] (0,0) node{$z$} circle (\er);
  	\end{tikzpicture}
  	\caption{A transition $(x, y) \mapsto (x', y')$ in the Markov chain}%
    \label{fig:markov-chain}
\end{figure}
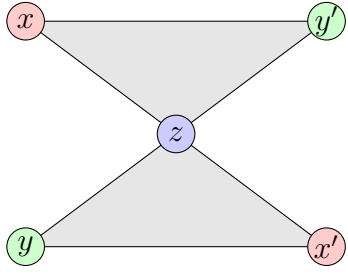

Define a Markov chain on $X \times Y$ as follows.
If the current state is $(x, y)$, pick uniformly at random $z \in Z$.
The next state is $(x', y')$, where $x'$ and $y'$ are the unique solutions to
\[
    (x, y', z), (x', y, z) \in \sL
\]
(see \Cref{fig:markov-chain}).
Let $\Adj$ be the transition operator for this Markov chain:
\[
    \Adj(f)(x, y) = \frac1n \sum_{(x, y', z), (x', y, z) \in \sL} f(x', y').
\]
The Markov chain is reversible with uniform stationary distribution, so $\Adj$ is self-adjoint and has the constant function on $X \times Y$ as a $1$-eigenvector.
Let $\cU$ be the projection to constants:
\[
    \cU(f)(x, y) = \frac1{n^2} \sum_{x', y'} f(x', y').
\]
\begin{definition}%
  \label{def:quasirandom}
  We say $\sL$ is \emph{$\Adj$-quasirandom} with parameter $\rho$ if $\Adj - \cU$ has spectral radius at most $\rho$.
\end{definition}
In particular, $\rho < 1$ if and only if the Markov chain is connected, and in general $\rho$ measures the rate of mixing.

\begin{remark}%
  \label{rem:l20}
  For a finite set $T$, let $L^2(T)_0$ denote the subspace $\bigl\{ f \in L^2(T) \colon \EE f = 0 \bigr\}$.
  Then equivalently, $\sL$ is $\Adj$-quasirandom with parameter $\rho$ if the restriction $\Adj|_{L^2(X \times Y)_0}$ has spectral radius at most $\rho$.
\end{remark}

All our applications of quasirandomness go through the following lemma.
\begin{lemma}%
  \label{lem:quasi-use}
  Assume $\sL$ is $\Adj$-quasirandom with parameter $\rho$ and\footnote{Note that the $m=1$ case of~\eqref{eq:quasi} does not obviously imply the general case: the operator-type norm for trilinear forms does not behave well under taking tensor powers.} let $m \ge 1$.
Then
\begin{equation}%
  \label{eq:quasi}
    |\Lambda(f, g, h)| \leq \rho^{m/2} \|f\|_2 \|g\|_2 \|h\|_2
\end{equation}
for all $f \in L^2(X)_0^{\otimes m}$, $g \in L^2(Y)_0^{\otimes m}$, $h \in L^2(Z)_0^{\otimes m}$.
\end{lemma}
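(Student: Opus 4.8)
The plan is to reduce the trilinear inequality \eqref{eq:quasi} to a statement about the operator $\Adj$ (or rather its $m$-th tensor power) acting on the tensor-square space, and then feed in the quasirandomness hypothesis. First I would express the form $\Lambda(f,g,h)$ on the latin square $\sL^m$ in a way that exposes the Markov operator. Concretely, for fixed $z \in Z^m$, averaging $f(x)h_1(\dots)$ over the appropriate fibres of $\sL^m$ relates the value to an inner product involving $\Adj$: the point of the Markov chain in \Cref{fig:markov-chain} is precisely that composing the two latin-square constraints $(x,y',z),(x',y,z)\in\sL$ is what turns a ``square'' of the trilinear form into one application of $\Adj$. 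So the natural first step is to bound $|\Lambda(f,g,h)|^2$ by an expression of the form $\langle (\Adj')^{?} \,\tilde f, \tilde f\rangle$ (plus symmetric terms), where $\Adj' = \Adj^{\otimes m}$ is the transition operator of the product chain on $(X\times Y)^m$, and $\tilde f$ is built from $f,g,h$.

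The key steps, in order: (1) Use the latin square property and Cauchy--Schwarz (as in \eqref{eq:l1-l2-l2}, but iterated) to write $|\Lambda(f,g,h)|^2 \le \|h\|_2^2 \cdot \langle \Adj' \Phi, \Phi\rangle$ for a suitable $\Phi \in L^2((X\times Y)^m)$ with $\|\Phi\|_2 = \|f\|_2\|g\|_2$ — i.e. a Gowers--Cauchy--Schwarz-type unfolding producing one copy of $\Adj'$. (2) Observe that because $f \in L^2(X)_0^{\otimes m}$ and $g \in L^2(Y)_0^{\otimes m}$ have mean zero in every coordinate, the function $\Phi$ lies in the subspace where $\Adj'$ acts with small norm: since $\Adj = \cU + (\Adj - \cU)$ and $\Adj - \cU$ has spectral radius $\le \rho$, the tensor power $\Adj'$ restricted to $L^2(X\times Y)_0^{\otimes m}$ has spectral radius $\le \rho^m$ (here one must check that $\Phi$ really is orthogonal to all functions constant in some coordinate block — this follows from the mean-zero hypotheses on $f$ and $g$ and the fact that $\cU$ kills such $\Phi$). (3) Conclude $\langle \Adj' \Phi, \Phi\rangle \le \rho^m \|\Phi\|_2^2 = \rho^m \|f\|_2^2\|g\|_2^2$, and hence $|\Lambda(f,g,h)|^2 \le \rho^m \|f\|_2^2\|g\|_2^2\|h\|_2^2$, which is \eqref{eq:quasi} after taking square roots.

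I expect the main obstacle to be step (2): verifying that the unfolded function $\Phi$ lies in the right mean-zero subspace so that the spectral bound on $\Adj - \cU$ can actually be applied coordinatewise to get the $\rho^m$ (not merely $\rho$) saving. The subtlety flagged in the footnote is exactly this — the ``operator norm'' of a trilinear form does not tensorize — so one cannot simply quote the $m=1$ case $m$ times; instead the gain must come from the single operator $\Adj' = \Adj^{\otimes m}$ having spectral radius $\rho^m$ on the codimension-restricted space, and the combinatorial identity relating $\Lambda$ on $\sL^m$ to $\Adj'$ must be set up carefully so that the Cauchy--Schwarz step preserves the mean-zero structure in all $m$ coordinates simultaneously. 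A secondary (but routine) point is bookkeeping the symmetry: one gets three such inequalities by permuting the roles of $f,g,h$, and any one of them suffices.
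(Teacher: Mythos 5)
Your proposal is correct and matches the paper's proof essentially verbatim: a single Cauchy--Schwarz in the $z$ variable unfolds $|\Lambda(f,g,h)|$ into $\langle \Adj^{\otimes m}(f \otimes \bar g), f \otimes \bar g\rangle^{1/2}\|h\|_2$, and since $f \otimes \bar g \in L^2(X\times Y)_0^{\otimes m}$ (exactly the mean-zero-in-each-coordinate condition you flag as the key point), the spectral radius of $\Adj^{\otimes m}$ restricted there is $\rho^m$, giving the bound. The only tiny inaccuracy is describing the Cauchy--Schwarz step as ``iterated'' — one application suffices.
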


\begin{remark}%
  \label{rem:0-tensor}
  Identifying $L^2(X)^{\otimes m}$ with $L^2(X^m)$ in the usual way, $L^2(X)_0^{\otimes m}$ is identified with the subspace $\im P_{[m]} \subseteq L^2(X^m)$; see~\eqref{eq:pa-proj}.
\end{remark}

\begin{proof}[Proof of \Cref{lem:quasi-use}]
    By Cauchy--Schwarz,
    \begin{align*}
        |\Lambda(f, g, h)|
        &= \left| \EE_{(x, y, z) \in \sL^m} f(x) g(y) h(z) \right| \\
        &\leq \br{ \EE_z \left | \EE_{x, y: (x, y, z) \in \sL^m} f(x) g(y)\right|^2}^{1/2} \|h\|_2 \\
        &= \br{\EE_{z, x, y, x', y' : (x, y, z), (x', y', z) \in \sL^m} f(x) g(y) {\bar f}(x') {\bar g}(y')}^{1/2} \|h\|_2 \\
        &= \langle \Adj^{\otimes m}(f \otimes \bar g), f \otimes \bar g \rangle^{1/2} \|h\|_2.
    \end{align*}
    Note $\|f \otimes \bar g\|_2 = \|f\|_2 \|g\|_2$, and that $f \otimes \overline{g} \in L^2(X \times Y)_0^{\otimes m}$.
    Since $\Adj |_{ L^2(X \times Y)_0}$ has spectral radius at most $\rho$,
    the tensor power $\Adj^{\otimes m} |_{L^2(X \times Y)_0^{\otimes m}}$ has spectral radius (and hence operator norm) at most $\rho^m$,
    so the last expression above is bounded by $\rho^{m/2} \|f\|_2 \|g\|_2 \|h\|_2$.
\end{proof}

\begin{remark}%
  \label{rem:quasi-other}
  As stated in the introduction, while \Cref{def:quasirandom} has some nice properties (e.g., the spectral radius of $\Adj - \cU$ can be computed efficiently), it is chosen for mainly practical rather than philosophical reasons, and there are similar but qualitatively inequivalent conditions that would work equally well.

  One notable criticism of this definition is that latin squares associated to Steiner triple systems (i.e., where $X=Y=Z$ and $\sL$ contains the diagonal $\{(x,x,x) \colon x \in X\}$ and is invariant under the $S_3$-action on triples) always fail to be $\Adj$-quasirandom with parameter $\rho<1$ (since the diagonal $\{(x, x) : x \in X\}$ of $X \times X$ is a closed set for the Markov chain).
  On the other hand, a random Steiner triple system is far from having algebraic structure and presumably satisfies~\eqref{eq:quasi} for $\rho = o(1)$ with high probability as $n \to \infty$.

  One point of view is that~\eqref{eq:quasi} itself is the more natural quasirandomness condition (but harder to verify), and \Cref{def:quasirandom} is a convenient sufficient condition.
\end{remark}

\begin{proof}[Proof of \Cref{thm:dense-minor}]
  Let $A \subset [n]$ and $|A| = m$.
  By \Cref{lem:quasi-use,rem:l20},
  \[
      |\Lambda(P_A 1_S, P_A 1_S, P_A 1_S)| \leq \rho^{m/2} \|P_A 1_S\|_2^3 \leq \rho^{m/2} \|1_S\|_2^3 = \rho^{m/2} \pfrac{n!}{n^n}^3.
  \]
  Hence, for $\rho \leq 1$,
  \[
      \sum_{|A| \geq m} |\Lambda(P_A1_S, P_A1_S, P_A1_S)| \leq 2^n \rho^{m/2} \pfrac{n!}{n^n}^3.
  \]
  Taking $m = \eps n$ and $\rho$ so that $2 \rho^{\eps/2} \le 1/10$, the result follows.
\end{proof}


%

\section{Quasirandomness}%
\label{sec:quasirandomness}

In this section we will verify that two natural classes of latin squares are $\Adj$-quasirandom with parameter $o(1)$:
\begin{itemize}
    \item multiplication tables of quasirandom groups;
    \item uniformly random $n \times n$ latin squares, with high probability as $n \to \infty$.
\end{itemize}
In the case of a group we can compute the whole spectrum of $\Adj$ using representation theory.
In the case of a random latin square we will use the bound
\[
    1 + \rho^6 \leq \tr \Adj^6
\]
which holds because the spectrum of $\Adj$ is real and $6$ is even.
By interpreting $n^6 \tr \Adj^6$ as counting certain kinds of configuration in $\sL$ (and using a recent result of~\cite{KSSS}) we will show that $\tr \Adj^6 = 1 + o(1)$ with high probability,
which implies that $\rho = o(1)$.
(Using the same method one can show that $\tr \Adj^4 = 3 + o(1)$ with high probability, so $6$ is the smallest even integer that we can use for this argument.)

\subsection{Quasirandom groups}

The following proposition  shows that our quasirandomness condition generalizes the definition of a quasirandom group (see~\cite{gowers}), implying Theorem~\ref{thm:quasirandom-groups-are-quasirandom}.

\begin{proposition}
    Suppose $\sL$ is the multiplication table of a group $G$.
    Then the spectrum of $\Adj$ consists of $d^3(d+1)/2$ copies of $1/d$ and $d^3(d-1)/2$ copies of $-1/d$ for every $d$-dimensional irreducible representation of $G$, and $n^2 - \sum_{\chi \in \Irr(G)} \chi(1)^4$ zeros.
    In particular $\rho = 1/D$ where $D$ is the minimal dimension of a nontrivial representation of $G$.
\end{proposition}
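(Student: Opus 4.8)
The plan is to diagonalize $\Adj$ completely using the Peter--Weyl decomposition of $L^2(G \times G)$. Take $X = Y = Z = G$ and $\sL_G = \{(a,b,c) : ab = c\}$ (any other convention differs only by relabelling). I would first unwind the Markov chain: the equations $(x,y',z),(x',y,z) \in \sL_G$ say $xy' = z$ and $x'y = z$, so $y' = x^{-1}z$, $x' = zy^{-1}$, and therefore
\[
    \Adj(f)(x,y) = \frac{1}{n}\sum_{z \in G} f(zy^{-1},\, x^{-1}z).
\]
Fix a unitary matrix realization of each $\rho \in \Irr(G)$; for $\rho,\sigma \in \Irr(G)$ let $W_{\rho,\sigma} \subseteq L^2(G \times G)$ be the span of the functions $(x,y) \mapsto \rho(x)_{ij}\sigma(y)_{kl}$. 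By Peter--Weyl, $L^2(G\times G) = \bigoplus_{\rho,\sigma} W_{\rho,\sigma}$ with $\dim W_{\rho,\sigma} = d_\rho^2 d_\sigma^2$ (so that $\sum d_\rho^2 d_\sigma^2 = n^2$), and the aim is to show $\Adj$ is block diagonal for this decomposition, vanishing off the blocks $W_{\rho,\bar\rho}$.

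The key computation is to apply $\Adj$ to a basis vector of $W_{\rho,\sigma}$: expanding $\rho(zy^{-1})_{ij}$ and $\sigma(x^{-1}z)_{kl}$ by matrix multiplication and interchanging sums, the average over $z$ reduces to $\tfrac1n\sum_z \rho(z)_{ia}\sigma(z)_{bl}$, which is an entry of the projection $\tfrac1n\sum_z(\rho\otimes\sigma)(z)$ onto the trivial-isotypic part of $\rho\otimes\sigma$; this is zero unless $\sigma\cong\bar\rho$. Hence $\Adj|_{W_{\rho,\sigma}} = 0$ for $\sigma\not\cong\bar\rho$. On $W_{\rho,\bar\rho}$, I would realize $\bar\rho$ by entrywise conjugation, so that $h_{ijkl}(x,y) := \rho(x)_{ij}\overline{\rho(y)_{kl}}$ ($1 \le i,j,k,l \le d_\rho$) is a basis. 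The same expansion, now using only unitarity ($\rho(x^{-1})_{kb} = \overline{\rho(x)_{bk}}$) and Schur orthogonality of $\rho$ against itself ($\tfrac1n\sum_z\rho(z)_{ia}\overline{\rho(z)_{bl}} = d_\rho^{-1}\delta_{ib}\delta_{al}$), yields
\[
    \Adj\, h_{ijkl} = \frac{1}{d_\rho}\, h_{ikjl},
\]
i.e.\ $\Adj|_{W_{\rho,\bar\rho}} = d_\rho^{-1} T$, where $T$ is the involution of this $d_\rho^4$-dimensional space that swaps the two middle indices. Since this step uses Schur orthogonality of $\rho$ only against itself, the self-conjugate case (real or quaternionic type) needs no separate argument.

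It then remains to read off the spectrum of the involution $T$. Its $(+1)$-eigenspace is spanned by the $d_\rho^3$ fixed basis vectors $h_{ijjl}$ together with the symmetrizations $h_{ijkl}+h_{ikjl}$ ($j \ne k$), so has dimension $d_\rho^3 + (d_\rho^4 - d_\rho^3)/2 = d_\rho^3(d_\rho+1)/2$, while its $(-1)$-eigenspace (the antisymmetrizations) has dimension $d_\rho^3(d_\rho-1)/2$. Thus $\Adj|_{W_{\rho,\bar\rho}}$ contributes $d_\rho^3(d_\rho+1)/2$ copies of $1/d_\rho$ and $d_\rho^3(d_\rho-1)/2$ copies of $-1/d_\rho$. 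Summing over $\Irr(G)$ (using $d_\rho = d_{\bar\rho}$, with the block $W_{\rho,\bar\rho}$ attached to $\rho$) gives exactly the claimed lists of eigenvalues, and the remaining $n^2 - \sum_\rho d_\rho^4 = n^2 - \sum_{\chi \in \Irr(G)}\chi(1)^4$ dimensions are zeros. Since $\cU$ is precisely the projection onto the one-dimensional trivial block $W_{\mathbf{1},\mathbf{1}}$ (which has eigenvalue $1$), $\Adj - \cU$ has spectrum $\{\pm 1/d_\rho : \mathbf{1} \ne \rho \in \Irr(G)\}$ together with zeros, and hence spectral radius $1/D$ with $D = \min\{d_\rho : \rho \ne \mathbf{1}\}$; this gives the proposition and \Cref{thm:quasirandom-groups-are-quasirandom}. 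The only real work is bookkeeping: tracking conjugates and index positions, and matching the $d^3(d\pm1)/2$ split to the fixed and swapped basis vectors of $T$.
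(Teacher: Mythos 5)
Your proposal is correct and follows essentially the same route as the paper: expand $L^2(G \times G)$ in the Peter--Weyl matrix-coefficient basis, apply Schur orthogonality to see that $\Adj$ vanishes off the $(\rho,\bar\rho)$-blocks and acts on each such $d^4$-dimensional block as $1/d$ times an index-swapping involution, then count the $\pm 1$-eigenspaces of that involution. The paper parametrizes the basis by inner products $\langle \rho(x)e_i,e_j\rangle\langle e'_\ell,\rho'(y)e'_k\rangle$ (so the $y$-factor is already conjugated, and the nonzero blocks appear at $\rho=\rho'$ with the swap on the outer indices $i,\ell$), while you use raw matrix entries (so the nonzero blocks appear at $\sigma\cong\bar\rho$ with the swap on the middle indices $j,k$); these are the same blocks under the obvious relabelling, and the eigenvalue count is identical. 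Your closing observation that $\cU$ is the projection onto $W_{\mathbf{1},\mathbf{1}}$, and hence $\Adj-\cU$ has spectral radius $1/D$, correctly supplies the final sentence of the proposition.
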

\begin{proof}


Here $X = Y = Z = G$ and $\sL = \{(x, y, z) \in G^3 : xy = z\}$,
so $L^2(X \times Y) = L^2(G \times G)$ and $\Adj$ is the operator defined by
\[
    \Adj(f)(x, y) = \frac1n \sum_{z \in G} f(z y^{-1}, x^{-1} z).
\]
By representation theory, $L^2(G)$ has an orthogonal basis consisting of the functions of the form $x \mapsto \langle \rho(x) e_i, e_j\rangle$, where $\rho \colon G \to U(V)$ is an irreducible unitary representation of $G$ and $e_1, \dots, e_{\dim V}$ is an orthonormal basis of $V$.

It follows that $L^2(G \times G) \cong L^2(G) \otimes L^2(G)$ has an orthogonal basis consisting of functions of the form
\[
  f_{\rho,\rho',i,j,k,\ell}(x,y) = \bigl\langle \rho(x) e_i, e_j \bigr\rangle \bigl\langle e'_\ell, \rho'(y) e'_k \bigr\rangle
\]
where $\rho \colon G \to U(V)$ and $\rho' \colon G \to U(V')$ are two irreducible unitary representations of $G$ and $1 \le i,j \le \dim V$, $1 \le k,\ell \le \dim V'$.

To find $\Adj(f_{\rho,\rho',i,j,k,\ell})$ we recall the Schur orthogonality relation for matrix coefficients, which states that for irreducible $V$, $V'$ as above, $a,b \in V$ and $a',b' \in V'$,
\[
  \frac1n \sum_{z \in G} \langle \rho(z) a, b \rangle \langle b', \rho'(z) a' \rangle = \begin{cases} 0 &\colon (\rho,V) \ncong (\rho',V') \\ \frac1{\dim V} \langle a, a' \rangle \langle b', b \rangle &\colon (\rho,V) = (\rho',V'), \end{cases}
\]
and thereby compute
\begin{align*}
  \Adj(f_{\rho,\rho',i,j,k,\ell})(x,y)
  &= \frac1n \sum_{z \in G} \bigl\langle \rho(z) \rho(y^{-1}) e_i, e_j \bigr\rangle \bigl\langle \rho(x) e'_\ell, \rho(z) e'_k \bigr\rangle \\
  &= \begin{cases} 0 &\colon (\rho,V) \ncong (\rho',V') \\ \frac1{\dim V} \bigl\langle \rho(x) e_\ell, e_j \bigr\rangle \bigl\langle e_i, \rho(y) e_k \bigr\rangle &\colon (\rho,V) = (\rho',V')
  \end{cases}\\
  &= \begin{cases} 0 &\colon (\rho,V) \ncong (\rho',V') \\ \frac1{\dim V} f_{\rho, \rho, \ell, j, k, i}(x, y) &\colon (\rho,V) = (\rho',V').
  \end{cases}
\end{align*}
In the case $\rho \ne \rho'$ we get an eigenfunction with eigenvalue $0$.
When $\rho = \rho'$ and $i=\ell$ we get a $(1/\dim V)$-eigenfunction.
Finally when $\rho = \rho'$ and $i \ne \ell$, the functions
\[
  f_{\rho,\rho,i,j,k,\ell} \pm f_{\rho,\rho,\ell,j,k,i}
\]
are eigenfunctions of $\Adj$ with eigenvalues $\pm 1/\dim V$ respectively.


Altogether we have $d^3 + d^3(d-1)/2 = d^3 (d+1)/2$ copies of $1/d$ and $d^3(d-1)/2$ copies of $-1/d$, and the rest $0$, as claimed.

\end{proof}

\subsection{Random latin squares}

We will use a recent result of Kwan, Sah, Sawhney, and Simkin~\cite{KSSS}
on configuration counts in random latin squares.
A \emph{triple system} is a 3-uniform 3-partite hypergraph $\sH \subset X_\sH \times Y_\sH \times Z_\sH$ with vertex classes $X_\sH, Y_\sH, Z_\sH$.
The number of vertices is $v = |X_\sH| + |Y_\sH| + |Z_\sH|$ and the number of triples (hyperedges) is $e = |\sH|$.
We say $\sH$ is \emph{latin} if every pair of vertices is in at most one triple.
(A latin square of order $n$ is then a latin triple system with 3 classes of $n$ vertices and $n^2$ triples.)

Let $\sH$ be a fixed triple system.
A \emph{copy} of $\sH$ in a triple system $\sL$ is a triple of injective maps
\[
    X_\sH \to X_\sL, \qquad
    Y_\sH \to Y_\sL, \qquad
    Z_\sH \to Z_\sL
\]
which maps triples to triples.
Let $N_\sH(\sL)$ denote the number of copies of $\sH$ in $\sL$.

Let $\sB_n$ denote the random triple system $\sB_n \subset [n] \times [n] \times [n]$ in which each possible triple is present independently with probability $1/n$.
Note that $\EE [N_\sH(\sB_n)] = (1 - o(1)) n^{v-e}$
(when $\sH$ is fixed and $n$ is large).
We say $\sH$ is \emph{$\alpha$-stable}
if $\alpha \geq v-e$ and
\[
    \EE [N_\sH(\sB_n) \mid \sQ \subset \sB_n] - \EE [N_\sH(\sB_n)] = o(n^\alpha)
\]
for any latin triple system $\sQ \subset [n] \times [n] \times [n]$ with at most $n (\log n)^3$ triples.

\begin{theorem}[{\cite[Theorem~7.2]{KSSS}}]%
    \label{thm:KSSS}
    Fix an $\alpha$-stable latin triple system $\sH$ with $v$ vertices and $e$ triples.
    Let $\sL$ be a uniformly random latin square.
    Then
    \[
        N_\sH(\sL) \leq n^{v-e} + o(n^\alpha)
    \]
    with high probability as $n \to \infty$.
\end{theorem}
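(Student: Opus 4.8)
I would run the second moment method on $N_\sH(\sL)$, using the binomial model $\sB_n$ as a heuristic and a fine enumeration estimate to transfer between the two. Write $N_\sH(\sL) = \sum_\phi 1_{\phi(\sH) \subset \sL}$, where $\phi$ runs over triples of injections $X_\sH \to [n]$, $Y_\sH \to [n]$, $Z_\sH \to [n]$, so that $\phi(\sH)$ is a partial latin square with $e$ triples. The one external input I would assume is an \emph{enumeration estimate}: for any partial latin square $\sQ$ with at most $n(\log n)^3$ triples, $\PP[\sQ \subset \sL]$ agrees up to a factor $1 + o(1)$ with the corresponding probability in $\sB_n$, and more precisely $\EE[N_\sH(\sL) \mid \sQ \subset \sL] - \EE[N_\sH(\sL)] = \EE[N_\sH(\sB_n) \mid \sQ \subset \sB_n] - \EE[N_\sH(\sB_n)] + o(n^\alpha)$. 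This is exactly the regime --- partial latin squares of size $n$ times a fixed power of $\log n$ --- in which the Kwan--Sah--Sawhney--Simkin enumeration machinery (completion of sparse partial latin squares, iterative absorption, Keevash-type results) applies, and it is why the bound $n(\log n)^3$ appears in the definition of $\alpha$-stability.

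Granting this, the first step is the first moment: each $\phi(\sH)$ has $e$ triples, so $\PP[\phi(\sH) \subset \sL] = (1 + o(1)) \PP[\phi(\sH) \subset \sB_n]$, and summing over the $(1 + o(1)) n^v$ injective $\phi$ gives $\EE[N_\sH(\sL)] = \EE[N_\sH(\sB_n)] + o(n^\alpha) = n^{v-e} + o(n^\alpha)$, using $\alpha \geq v - e$. The second step is the variance, where one must do better than Markov (if $\alpha = v - e$ then $\EE[N_\sH(\sL)] \asymp n^\alpha$, so Markov gives no $o(1)$ tail at scale $n^\alpha$). Decompose
\[
\EE[N_\sH(\sL)^2] = \sum_{\phi_0} \PP[\phi_0(\sH) \subset \sL]\, \EE\bigl[ N_\sH(\sL) \mid \phi_0(\sH) \subset \sL \bigr].
\]
By the enumeration estimate the inner conditional expectation is $\EE[N_\sH(\sL)] + \bigl( \EE[N_\sH(\sB_n) \mid \phi_0(\sH) \subset \sB_n] - \EE[N_\sH(\sB_n)] \bigr) + o(n^\alpha)$, and $\alpha$-stability of $\sH$, applied with $\sQ = \phi_0(\sH)$ (legitimate since $e$ is a constant, well below $n(\log n)^3$), bounds the bracketed difference by $o(n^\alpha)$. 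Hence $\EE[N_\sH(\sL)^2] = \EE[N_\sH(\sL)] \bigl( \EE[N_\sH(\sL)] + o(n^\alpha) \bigr) = \EE[N_\sH(\sL)]^2 + n^{v-e} o(n^\alpha)$, so, again using $\alpha \geq v - e$, $\mathrm{Var}(N_\sH(\sL)) = o(n^{2\alpha})$. Chebyshev's inequality then gives $N_\sH(\sL) = \EE[N_\sH(\sL)] + o(n^\alpha) = n^{v-e} + o(n^\alpha)$ with high probability, which in particular yields the stated bound. An alternative to the enumeration input would be to build $\sL$ by the triangle-removal process, track $N_\sH$ by the differential-equation method with self-correcting error terms, and complete the last $o(n^2)$ cells by absorption using an absorber of $O(n(\log n)^3)$ triples --- exactly the scope of $\alpha$-stability, which again serves to bound the relevant extension counts.

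The main obstacle is precisely this transfer from $\sB_n$ to $\sL$: a uniform latin square admits no local modification, its exact enumeration is unknown, and no efficient sampler exists, so the moment computation cannot be carried out directly in $\sL$. Everything rests on the enumeration estimate $\PP[\sQ \subset \sL] = (1 + o(1)) \PP[\sQ \subset \sB_n]$ for $\sQ$ of up to polylog-times-$n$ triples, whose proof is the genuinely hard analytic content and is what is cited here. The remaining work --- organizing the off-diagonal second-moment sum and checking that each overlap type is controlled --- collapses to the two inequalities $o(n^\alpha)$ under conditioning and $\alpha \geq v - e$, both built into $\alpha$-stability; the only real care needed is to match exponents, since at $\alpha = v - e$ the threshold $n^\alpha$ is tight.
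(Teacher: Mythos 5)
This theorem is cited from~\cite{KSSS}, not proved in the present paper, so there is no internal proof to compare against; what follows is a review of the proposal on its own merits.

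Your reconstruction is internally consistent at the level of exponent bookkeeping, and you are right that the whole weight rests on a transfer principle relating subset probabilities in a uniform latin square to those in $\sB_n$; you are also right to flag that this is the genuinely hard content proved in~\cite{KSSS}. However, there is a structural mismatch that indicates your route is not the one actually taken. In your second moment computation you only ever invoke $\alpha$-stability with $\sQ = \phi_0(\sH)$, a fixed-size configuration with $e = O(1)$ triples. The definition of $\alpha$-stability, by contrast, deliberately quantifies over \emph{all} latin $\sQ$ with up to $n(\log n)^3$ triples. A proof that never conditions on anything larger than $O(1)$ triples would make that generality superfluous, so the scope of the definition is strong evidence that the argument in~\cite{KSSS} conditions on a large random substructure (as in the triangle-removal/differential-equation/absorption route you mention only as an aside) rather than on a single copy of $\sH$. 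The one-sidedness of the conclusion ($N_\sH(\sL) \le n^{v-e} + o(n^\alpha)$, not a two-sided concentration) points the same way: a Chebyshev argument would naturally give both tails, whereas a process/martingale argument tracking an increasing quantity yields exactly an upper tail. So your ``alternative'' route is very likely the real one, and the second-moment sketch you put forward as the main line is probably not how the theorem is proved; moreover, the precise enumeration estimate it requires (comparison of \emph{conditional} expectations with additive error $o(n^\alpha)$, not just a multiplicative $1+o(1)$ on unconditional probabilities for bounded $\sQ$) is itself nontrivial to extract and would need care when the union $\phi(\sH) \cup \phi_0(\sH)$ fails to be latin. In short: correct diagnosis of the hard input and a plausible-looking reduction, but the reduction as stated doesn't match the shape of the hypothesis, and the alternative you mention in passing is almost certainly the intended argument.
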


In order to use this theorem effectively we need a computable form of stability.
Let $\sH$ be a latin triple system.
A subset of the vertices $S \subset X_\sH \cup Y_\sH \cup Z_\sH$ is called \emph{closed} if whenever two vertices of a triple of $\sH$ is in $S$, so is the third.
The \emph{closure} $\langle S \rangle_\sH$ of a subset $S$ if the smallest closed set containing it.
If $\sF \subset \sH$ let $X_\sF, Y_\sF, Z_\sF$ denote the vertices incident with at least one member of $\sF$, and let $v(\sF) = |X_\sF| + |Y_\sF| + |Z_\sF|$ and $e(\sF) = |\sF|$.
We say $\sF \subset \sH$ \emph{generates} $\sH$ if
\[
    \langle X_\sF \cup Y_\sF \cup Z_\sF\rangle_\sH = X_\sH \cup Y_\sH \cup Z_\sH.
\]
Let
\[
    d(\sH) = \min \{ e(\sF) : \sF~\text{generates}~\sH\}.
\]

For example, if $\sH_1$ is the latin triple system shown in \Cref{fig:trAdj6},
one generating set consists of both triples containing $z_1$, one triple containing $z_3$, and one triple containing $z_5$,
and there is no smaller generating set, so $d(\sH_1) = 4$.

\begin{lemma}%
    \label{lem:stability-lemma}
    Let $\sH$ be a latin triple system with $v$ vertices and $e$ triples.
    Then $\sH$ is $\alpha$-stable provided $\alpha \geq v - e$ and
    \[
        \alpha > v - e + \max_{\emptyset \neq \sF \subset \sH} \bigl(d(\sF) - v(\sF) + e(\sF)\bigr).
    \]
\end{lemma}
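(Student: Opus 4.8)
The plan is to expand the conditional and unconditional expectations of $N_\sH(\sB_n)$ over all \emph{potential copies} of $\sH$ --- triples of injective maps $X_\sH \to [n]$, $Y_\sH \to [n]$, $Z_\sH \to [n]$, not required to send triples to triples --- and to track, for each such map $\phi$, which triples of $\sH$ are forced into $\sQ$. Writing $N_\sH(\sB_n) = \sum_\phi \prod_{t \in \sH} \mathbf{1}[\phi(t) \in \sB_n]$, note first that since $\sH$ is latin and $\phi$ is injective on each class, two distinct triples of $\sH$ cannot have the same image, so $\phi$ is automatically injective on triples and $|\phi(\sH)| = e$; hence the events $\{\phi(t) \in \sB_n\}$ are independent and $\EE[N_\sH(\sB_n)] = \sum_\phi n^{-e}$. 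For a potential copy $\phi$ put $\sF_\phi = \{ t \in \sH : \phi(t) \in \sQ \}$. Conditionally on $\sQ \subseteq \sB_n$, the $e(\sF_\phi)$ triples of $\phi(\sF_\phi)$ are present with probability one, while the remaining $e - e(\sF_\phi)$ (none of which lies in $\sQ$) are still present independently with probability $1/n$, so $\EE[N_\sH(\sB_n) \mid \sQ \subseteq \sB_n] = \sum_\phi n^{-(e - e(\sF_\phi))}$. Subtracting and grouping by $\sF = \sF_\phi$ (the terms with $\sF_\phi = \emptyset$ cancel exactly),
\[
    \EE[N_\sH(\sB_n) \mid \sQ \subseteq \sB_n] - \EE[N_\sH(\sB_n)]
    = \sum_{\emptyset \neq \sF \subseteq \sH} \bigl(n^{-(e - e(\sF))} - n^{-e}\bigr)\, \#\{\phi : \sF_\phi = \sF\}.
\]
Every summand is nonnegative, and since $\{\sF_\phi = \sF\} \subseteq \{\phi : \phi(\sF) \subseteq \sQ\}$, the right-hand side is at most $\sum_{\emptyset \neq \sF \subseteq \sH} n^{-(e - e(\sF))}\, \#\{\phi : \phi(\sF) \subseteq \sQ\}$, a sum with $O(1)$ terms.

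So the task reduces to bounding $\#\{\phi : \phi(\sF) \subseteq \sQ\}$ for each fixed nonempty $\sF \subseteq \sH$, uniformly over latin $\sQ$ with $|\sQ| \le n(\log n)^3$. Such a $\phi$ restricts on the $v(\sF)$ vertices incident to $\sF$ to an embedding of $\sF$ into $\sQ$, and the remaining $v - v(\sF)$ vertices of $\sH$ can be placed in at most $n^{v - v(\sF)}$ ways; so I would prove
\[
    \#\{\text{embeddings of } \sF \text{ into } \sQ\} \le |\sQ|^{d(\sF)}.
\]
To do this, fix $\sF' \subseteq \sF$ with $e(\sF') = d(\sF)$ that generates $\sF$, i.e.\ with $\langle X_{\sF'} \cup Y_{\sF'} \cup Z_{\sF'} \rangle_\sF = X_\sF \cup Y_\sF \cup Z_\sF$. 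An embedding of $\sF'$ into $\sQ$ is determined by the images of its $d(\sF)$ triples (every vertex of $\sF'$ lies in one), so there are at most $|\sQ|^{d(\sF)}$ of them. And an embedding $\psi$ of $\sF$ into $\sQ$ is determined by its restriction to $V_{\sF'}$: if $\psi_1,\psi_2$ agree there, the set where they agree is closed under the rule ``a triple of $\sF$ with two agreeing vertices has its third vertex agree'' --- because a latin $\sQ$ has at most one triple through two given vertices --- so by the generating property the set is all of $V_\sF$. Hence $\#\{\phi : \phi(\sF) \subseteq \sQ\} \le n^{v - v(\sF)} (n(\log n)^3)^{d(\sF)}$.

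Combining the two steps, the contribution of each nonempty $\sF$ to the displayed difference is at most
\[
    n^{-(e - e(\sF))}\, n^{v - v(\sF)} (n(\log n)^3)^{d(\sF)}
    = n^{\,(v - e) + e(\sF) - v(\sF) + d(\sF)}\, (\log n)^{3 d(\sF)}.
\]
By hypothesis $(v - e) + e(\sF) - v(\sF) + d(\sF) < \alpha$ for every nonempty $\sF \subseteq \sH$; since the exponent on the left is an integer and there are only finitely many $\sF$, there is a fixed positive gap, the polylogarithmic factor is absorbed, and each contribution --- hence the whole $O(1)$-term sum --- is $o(n^\alpha)$, uniformly over admissible $\sQ$. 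Together with the standing hypothesis $\alpha \ge v - e$, this is precisely $\alpha$-stability.

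The step I expect to be the main obstacle is the embedding bound $\#\{\text{embeddings of } \sF \text{ into } \sQ\} \le |\sQ|^{d(\sF)}$: this is where the combinatorial definitions of $d(\sF)$ and of closure are genuinely used, and it hinges on $\sQ$ being latin so that propagation of vertex images along triples is single-valued --- essentially the same closure argument as in the proof of \Cref{lem:crank-bound}. Everything else --- the expansion of the expectation, the automatic injectivity of $\phi$ on triples, and the final exponent bookkeeping (including checking that the strict inequality overcomes the $(\log n)^{O(1)}$ loss) --- is routine once this is in place.
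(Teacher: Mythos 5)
Your proof is correct and follows essentially the same route as the paper's: expand the conditional and unconditional expectations over potential copies, group by the nonempty subsystem $\sF$ of forced triples, bound the number of copies of $\sF$ in $\sQ$ by $|\sQ|^{d(\sF)}$ via a generating subsystem and the latin/closure propagation argument, and finish by exponent bookkeeping. Your write-up is somewhat more explicit (e.g., spelling out the exact cancellation of the $\sF_\phi = \emptyset$ terms, the closure argument for why an embedding of $\sF$ is determined on $\sF'$, and why the $(\log n)^{O(1)}$ factor is harmless because the exponent is an integer), but there is no difference in substance.
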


\begin{remark}%
  \label{rem:subgraph}
  A much simpler model problem is the following: given a fixed graph $H$ and a random graph $G_{n,p}$, does $G$ contain $n^{v(H)} p^{e(H)} (1+o(1))$ copies of $H$ (i.e., close to the expected number) with high probability?
  The answer might be no if $H$ contains a subgraph $H'$ with much greater density than $H$ in some sense: indeed, if $n^{v(H')} p^{e(H')}= o(1)$ then with high probability $G(n,p)$ contains zero copies of $H'$, and hence of $H$.
  However, this is essentially all that can go wrong.
  The condition for $\alpha$-stability in the lemma captures a similar intuition.
\end{remark}

\begin{remark}%
    Given a triple system $\sH \subseteq X_\sH \times Y_\sH \times Z_\sH$, one can construct a partition triple $\psystem = (\pi_1,\pi_2,\pi_3) \in \Pi_\sH^3$ in the sense of Section~\ref{subsec:partitions} (i.e., the ground set has size $e(\sH)$) where two triples $(x,y,z),(x',y',z') \in \sH$ lie in the same cell of $\pi_1$ if and only if $x=x'$, and similarly for $\pi_2$ and $y=y'$, and $\pi_3$ and $z=z'$.

    The construction can be reversed (up to the issue of repeated edges).
    In other words, triple systems and partition triples are more-or-less the same objects.
    Under this analogy, the notion of closure here coincides with that in Definition~\ref{def:crank}, and $\crank(\psystem) = 2e(\sH) - d(\sH)$.

    Although using both languages is strictly speaking redundant, it is useful to keep the two notions separate, partly for minor technical reasons, but mainly because using partition systems follows our previous work in \cites{EMM-cyclic, EMM} while using triple systems follows~\cite{KSSS}.
\end{remark}

\begin{proof}[Proof of \Cref{lem:stability-lemma}]
    (Cf.~\cite[p.~15]{KSSS})
    Let $\sQ \subset [n]^3$ be a latin triple system with at most $n^{1+o(1)}$ triples.
    For a copy of $\sH$ in $\sB_n$, say one of its triples is \emph{forced}
    if it appears in $\sQ$.
    The difference
    \begin{equation}
        \label{eq:stability-difference}
        \EE[N_\sH(\sB_n) \mid \sQ \subset \sB_n] - \EE[N_\sH(\sB_n)]
    \end{equation}
    arises from copies of $\sH$ with at least one forced triple.
    Let $\sF \subset \sH$ be a nonempty subsystem
    and consider copies of $\sH$ whose forced triples are precisely the images of those in $\sF$.
    Let $\sF_0 \subset \sF$ be a generating subsystem of size $d(\sF)$.
    Because $\sQ$ satisfies the latin property, any copy of $\sF$ in $\sQ$ is determined by the image of $\sF_0$.
    Therefore the number of copies of $\sF$ in $\sQ$ is at most $|\sQ|^{|\sF_0|}$.
    There are $v - v(\sF)$ vertices of $\sH$ outside $\sF$, each with $n$ possible images in $[n]^3$,
    and the image of each of the $e - e(\sF)$ triples outside $\sF$ has probability $1/n$ (independently) of being present in $\sB_n$.
    Hence the contribution to~\eqref{eq:stability-difference} from $\sF$ is bounded by
    \[
        |\sQ|^{|\sF_0|} n^{v - v(\sF)} (1/n)^{e - e(\sF)}
        = n^{v - e + d(\sF) - v(\sF) + e(\sF) + o(1)}.
    \]
    This is $o(n^{\alpha})$ provided the stated condition is satisfied.
\end{proof}

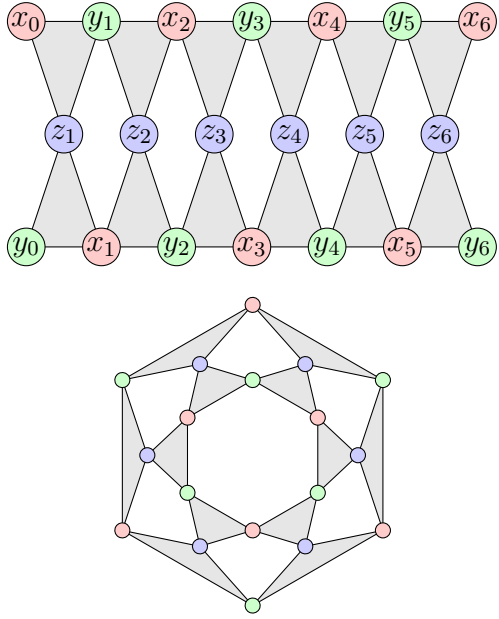
\begin{figure}
    \centering
    \begin{subfigure}{\textwidth}
        \centering
        \begin{tikzpicture}[scale=0.5]
          \def\er{0.5}
          \def\ey{3}

          \draw[fill=gray!20] (0,\ey) -- (2,\ey) -- (1,0) -- cycle;
          \draw[fill=gray!20] (2,\ey) -- (4,\ey) -- (3,0) -- cycle;
          \draw[fill=gray!20] (4,\ey) -- (6,\ey) -- (5,0) -- cycle;
          \draw[fill=gray!20] (6,\ey) -- (8,\ey) -- (7,0) -- cycle;
          \draw[fill=gray!20] (8,\ey) -- (10,\ey) -- (9,0) -- cycle;
          \draw[fill=gray!20] (10,\ey) -- (12,\ey) -- (11,0) -- cycle;
          \draw[fill=gray!20] (0,-\ey) -- (2,-\ey) -- (1,0) -- cycle;
          \draw[fill=gray!20] (2,-\ey) -- (4,-\ey) -- (3,0) -- cycle;
          \draw[fill=gray!20] (4,-\ey) -- (6,-\ey) -- (5,0) -- cycle;
          \draw[fill=gray!20] (6,-\ey) -- (8,-\ey) -- (7,0) -- cycle;
          \draw[fill=gray!20] (8,-\ey) -- (10,-\ey) -- (9,0) -- cycle;
          \draw[fill=gray!20] (10,-\ey) -- (12,-\ey) -- (11,0) -- cycle;

          \draw[fill=red!20] (0,\ey) node{$x_0$} circle (\er);
          \draw[fill=green!20] (2,\ey) node{$y_1$} circle (\er);
          \draw[fill=red!20] (4,\ey) node{$x_2$} circle (\er);
          \draw[fill=green!20] (6,\ey) node{$y_3$} circle (\er);
          \draw[fill=red!20] (8,\ey) node{$x_4$} circle (\er);
          \draw[fill=green!20] (10,\ey) node{$y_5$} circle (\er);
          \draw[fill=red!20] (12,\ey) node{$x_6$} circle (\er);

          \draw[fill=green!20] (0,-\ey) node{$y_0$} circle (\er);
          \draw[fill=red!20] (2,-\ey) node{$x_1$} circle (\er);
          \draw[fill=green!20] (4,-\ey) node{$y_2$} circle (\er);
          \draw[fill=red!20] (6,-\ey) node{$x_3$} circle (\er);
          \draw[fill=green!20] (8,-\ey) node{$y_4$} circle (\er);
          \draw[fill=red!20] (10,-\ey) node{$x_5$} circle (\er);
          \draw[fill=green!20] (12,-\ey) node{$y_6$} circle (\er);

          \draw[fill=blue!20] (1,0) node{$z_1$} circle (\er);
          \draw[fill=blue!20] (3,0) node{$z_2$} circle (\er);
          \draw[fill=blue!20] (5,0) node{$z_3$} circle (\er);
          \draw[fill=blue!20] (7,0) node{$z_4$} circle (\er);
          \draw[fill=blue!20] (9,0) node{$z_5$} circle (\er);
          \draw[fill=blue!20] (11,0) node{$z_6$} circle (\er);
      	\end{tikzpicture}
    \end{subfigure}
    \par\bigskip
    \begin{subfigure}{\textwidth}
        \centering
        \begin{tikzpicture}
            \def\er{0.1}
            \foreach \rotation in {0, 60, 120, 180, 240, 300}{
                \draw[rotate=\rotation, fill=gray!20] (30:1) -- (-30:1) -- (0:1.4) -- cycle;
                \draw[rotate=\rotation, fill=gray!20] (0:1.4) -- (30:2) -- (-30:2) -- cycle;
                \draw[rotate=\rotation, fill=blue!20] (0:1.4) circle (\er);
            }
            \foreach \rotation in {0, 120, 240}{
                    \draw[rotate=\rotation, fill=red!20] (30:1) circle (\er);
                    \draw[rotate=\rotation, fill=red!20] (-30:2) circle (\er);
                    \draw[rotate=\rotation, fill=green!20] (-30:1) circle (\er);
                    \draw[rotate=\rotation, fill=green!20] (30:2) circle (\er);
            }
        \end{tikzpicture}
    \end{subfigure}

    \caption{The chain $(x_0, y_0), \dots, (x_6, y_6)$
    and the latin triple system $\sH_1$ defined by identifying $x_0$ with $x_6$ and $y_0$ with $y_6$}%
 	\label{fig:trAdj6}
\end{figure}

Now we can show that random latin squares are $\Adj$-quasirandom with parameter $o(1)$ with high probability (\Cref{thm:random-implies-quasirandom}).
This follows from the following proposition and the bound $1 + \rho^6 \leq \tr \Adj^6$.

\begin{proposition}%
    \label{prop:random-square-tr-Adj6}
    For a uniformly random latin square $\sL$,
    \[
        \tr \Adj^6 = 1 + o(1)
    \]
    with high probability as $n \to \infty$.
\end{proposition}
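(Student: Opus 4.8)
The plan is to give $n^6 \tr \Adj^6$ a combinatorial meaning. Since $\Adj$ is self-adjoint with real spectrum and has the constant function as a $1$-eigenvector, we have the deterministic bound $\tr \Adj^6 \ge 1$, so it suffices to prove the matching upper bound $\tr \Adj^6 \le 1 + o(1)$ with high probability. Unwinding the definition of $\Adj$, the quantity $n^6 \tr \Adj^6$ counts closed walks of length $6$ in the Markov chain: such a walk is specified by a starting state $(x_0, y_0) \in X \times Y$ together with symbols $z_1, \dots, z_6 \in Z$, which determine the intermediate states $(x_1, y_1), \dots, (x_6, y_6)$ via $(x_{i-1}, y_i, z_i), (x_i, y_{i-1}, z_i) \in \sL$, subject to the closure condition $(x_6, y_6) = (x_0, y_0)$. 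Equivalently, $n^6 \tr \Adj^6$ is the number of ``hexagonal configurations'' in $\sL$: sequences of rows $x_0, \dots, x_6 = x_0$, columns $y_0, \dots, y_6 = y_0$, and symbols $z_1, \dots, z_6$ obeying these $12$ incidences.

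I would then stratify these configurations according to which of the parameters $x_0, \dots, x_5$, $y_0, \dots, y_5$, $z_1, \dots, z_6$ coincide. The generic stratum, in which the only coincidences are the forced ones $x_6 = x_0$ and $y_6 = y_0$, is precisely the set of copies in $\sL$ of the latin triple system $\sH_1$ of \Cref{fig:trAdj6}, which has $v = 18$ vertices, $e = 12$ triples, and in which each vertex lies in exactly two triples and each pair of vertices in at most one; so this stratum contributes exactly $N_{\sH_1}(\sL)$. Each of the remaining (finitely many) strata corresponds to a proper latin quotient $\sH'$ of $\sH_1$ and contributes at most $N_{\sH'}(\sL)$; note that an identification compatible with the incidences and the latin property may cascade (e.g.\ identifying two rows can force two columns to be identified and two triples to collapse), and in any case one checks $v(\sH') - e(\sH') \le 5$ for every such $\sH'$.

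The heart of the argument is then to apply \Cref{thm:KSSS}. First I would verify via \Cref{lem:stability-lemma} that $\sH_1$ is $6$-stable, i.e.\ that $d(\sF) - v(\sF) + e(\sF) < 0$ for every nonempty $\sF \subseteq \sH_1$; because $\sH_1$ (hence every $\sF$) is sparse, its subsystems are disjoint unions of ``path-like'' and ``cycle-like'' chains of triples, for which this is a short finite check (for a chain of $k$ triples one has $d = k$ or $k-1$ and $v - e = k$ or $k$ respectively, so $d - v + e = -1$). \Cref{thm:KSSS} then gives $N_{\sH_1}(\sL) \le n^6 + o(n^6)$ with high probability. Likewise, for each degenerate quotient $\sH'$ one checks the weaker inequality $d(\sF) - v(\sF) + e(\sF) \le 0$ for all nonempty $\sF \subseteq \sH'$ (weaker because $v(\sH') - e(\sH') \le 5$), which makes $\sH'$ also $6$-stable, so \Cref{thm:KSSS} gives $N_{\sH'}(\sL) \le n^{v(\sH') - e(\sH')} + o(n^6) = o(n^6)$. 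Summing over the $O(1)$ strata, and taking a union bound over the finitely many high-probability events, yields $n^6 \tr \Adj^6 \le n^6 + o(n^6)$ with high probability, hence $\tr \Adj^6 = 1 + o(1)$.

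The main obstacle I expect is the bookkeeping in the stratification: enumerating the quotients $\sH'$ while tracking the cascading identifications, and confirming for each of them, as well as for $\sH_1$, the stability inequality of \Cref{lem:stability-lemma}. None of these checks is deep, but there are many, and there is one slightly delicate point: for $\sH_1$ itself the inequality must be \emph{strict}, since a subsystem $\sF$ with $d(\sF) = v(\sF) - e(\sF)$ would only give $\alpha$-stability for $\alpha > 6$, which is useless; so one must make sure no such ``just barely dense'' subsystem occurs in $\sH_1$.
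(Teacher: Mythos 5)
Your overall strategy matches the paper exactly: read $n^6\tr\Adj^6$ as a count of closed walks of length $6$ in the Markov chain (equivalently, of hexagonal configurations in $\sL$), stratify by which vertices coincide so that the count becomes $\sum_i N_{\sH_i}(\sL)$ over $\sH_1$ and its latin quotients, note the deterministic lower bound $\tr\Adj^6 \ge 1$, and then apply \Cref{lem:stability-lemma} and \Cref{thm:KSSS} to show each $N_{\sH_i}$ is at most its expected order with high probability.

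Where you go wrong is in the justification of the stability checks. You claim that since $\sH_1$ is sparse, ``its subsystems are disjoint unions of `path-like' and `cycle-like' chains of triples,'' so that the inequality $d(\sF)-v(\sF)+e(\sF)<0$ is a short paper check. This is false. Every vertex of $\sH_1$ lies in exactly two triples, so each of the $12$ triples shares a vertex with \emph{three} others: the triple-intersection graph of $\sH_1$ is a connected cubic graph on $12$ vertices (the ``closed ladder'' obtained from the hexagonal ring of triangles), not a cycle. Its induced subgraphs include stars, ladders, and other branching configurations, and these are the $\sF$ over which the maximum in \Cref{lem:stability-lemma} is taken. (For example, $\{T_1,T_2,T_3,T_4\}$ in the natural labelling is a $4$-cycle in the intersection graph, not a path; and the triples meeting a fixed triple form a $K_{1,3}$.) The quotients $\sH_2,\dots,\sH_k$ have still messier structure, which is why the paper verifies conditions (1) and (2) by exhaustive computer search — there are $1206$ degenerations ($154$ up to isomorphism). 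Your conclusion that the checks succeed is correct (the paper confirms it), and you correctly pin down that $\sH_1$ needs the strict inequality because $v_1-e_1=6$, but the hand argument you give in place of the computer search does not actually cover the cases that arise, so the proof is incomplete as written.
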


\begin{proof}[Proof (computer-assisted)]
    For $(x_0, y_0) \in X \times Y$, let $(x_i, y_i)$ denote the iterates of $(x_0, y_0)$ under the Markov chain defining $\Adj$.
    Then
    \[
        \tr \Adj^6
        = \sum_{x_0, y_0} \PP\bigl((x_6, y_6) = (x_0, y_0)\bigr)
        = N / n^6,
    \]
    where $N$ is the number of configurations in $\sL$ of the form shown in \Cref{fig:trAdj6} with $x_0 = x_6$ and $y_0 = y_6$.
    We do not assume the other vertices are distinct.

    Let $\sH_1$ be the latin triple system depicted in \Cref{fig:trAdj6} and let $\sH_2, \dots, \sH_k$ (where $k$ is bounded) be all the degenerations obtainable by identifying some (like-colored) vertices and identifying triangles as necessary to preserve the latin property.

    Formally, we consider all triples of partitions $(\pi_X,\pi_Y,\pi_Z)$ where $\pi_X \in \Pi_{X_{\sH_1}}$, $\pi_Y \in \Pi_{Y_{\sH_1}}$, $\pi_Z \in \Pi_{Z_{\sH_1}}$ satisfying the following closure property: if $(x,y,z)$ and $(x',y',z')$ are two triples of $\sH_1$ and two of the pairs $(x,x')$, $(y,y')$, $(z,z')$ are in the same cell of $\pi_X$, $\pi_Y$, $\pi_Z$ respectively then so is the third.
    Number such triples of partitions $1,\dots,k$, where $1$ corresponds to three copies of the discrete partition. Then $\sH_i$ denotes the quotient hypergraph of $\sH_1$ with respect to partition $i$.

    Let $N_i = N_{\sH_i}(\sL)$.
    Then $N = N_1 + \cdots + N_k$.
    Let $v_i = v(\sH_i)$ and $e_i = e(\sH_i)$.
    Then $v_1 - e_1 = 18 - 12 = 6$.
    Now the proposition follows from \Cref{thm:KSSS}, \Cref{lem:stability-lemma}, and the following two claims:
    \begin{enumerate}
        \item $v_i - e_i \leq 5$ for each $i > 1$,
        \item $v_i - e_i + \max_{\emptyset \neq \sF \subset \sH} \bigl(d(\sF) - v(\sF) + e(\sF)\bigr) \leq 5$ for each $i \geq 1$.
    \end{enumerate}
    Indeed, provided (1) and (2) hold, \Cref{lem:stability-lemma} shows that $\sH_i$ is $6$-stable for each $i \geq 1$, so \Cref{thm:KSSS} implies that $N_i \leq n^{v_i - e_i} + o(n^6)$ with high probability for each $i$, so $N \leq (1 + o(1)) n^6$ with high probability.

    Both claims can be verified by exhaustive search.
    We find $\sH_2, \dots, \sH_k$ by starting with $\sH_1$ and iteratively identifying pairs of vertices, using breadth-first search.
    Thus we verify (1).
    Now for each $\sH_i$ we check all subsystems $\sF \subset \sH_i$ and compute $d(\sF)$ by checking all $\sF_0 \subset \sF$,
    and thus we verify (2).

    It turns out $k = 1206$, and there are $154$ distinct isomorphism classes among the degenerations $\sH_i$.
    The quantity in (2) turns out to be at most 4 in all cases except $\sH_1$, for which it is $5$.
    There are just eight degenerations $\sH_i$ (up to isomorphism) for which $v_i - e_i = 5$.
    Of these, four are just $\sH_1$ with a single pair of vertices identified (so $v_i = 17$ and $e_i = 12$).
    The other four cases are shown in \Cref{fig:degenerations}.
    These cases are therefore the dominant contributors to the error term.
\end{proof}

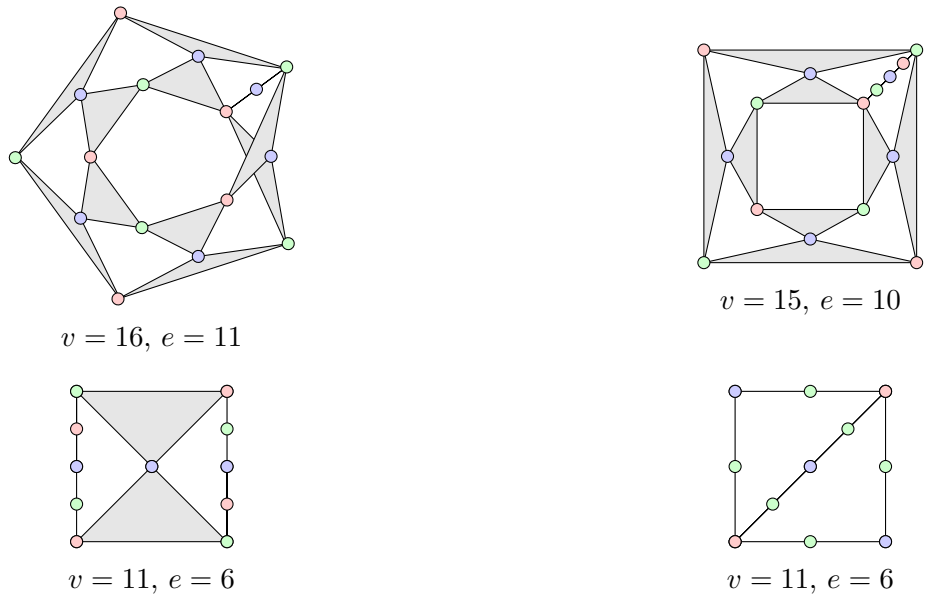
\begin{figure}
    \centering
    \begin{subfigure}{0.49\textwidth}
        \centering
        \begin{tikzpicture}
            \def\er{0.08}
            \def\angle{36.5}

            \draw[fill=gray!20] (\angle:1) -- (\angle:2) -- (\angle:1.5) -- cycle;

            \draw[fill=gray!20] (\angle:1) -- (0:1.4) -- (-\angle:2) -- cycle;
            \draw[fill=gray!20] (\angle:2) -- (0:1.4) -- (-\angle:1) -- cycle;

            \foreach \rotation in {72, 144, 216, 288}{
                \draw[rotate=\rotation, fill=gray!20] (\angle:1) -- (-\angle:1) -- (0:1.4) -- cycle;
                \draw[rotate=\rotation, fill=gray!20] (0:1.4) -- (\angle:2) -- (-\angle:2) -- cycle;
                \draw[rotate=\rotation, fill=blue!20] (0:1.4) circle (\er);
            }
            \foreach \rotation in {144, 288}{
                    \draw[rotate=\rotation, fill=red!20] (\angle:1) circle (\er);
                    \draw[rotate=\rotation, fill=red!20] (-\angle:2) circle (\er);
                    \draw[rotate=\rotation, fill=green!20] (-\angle:1) circle (\er);
                    \draw[rotate=\rotation, fill=green!20] (\angle:2) circle (\er);
            }

            \draw[fill=blue!20] (0:1.4) circle (\er);
            \draw[fill=red!20] (\angle:1) circle (\er);
            \draw[fill=green!20] (\angle:2) circle (\er);

            \draw[fill=blue!20] (\angle:1.5) circle (\er);
        \end{tikzpicture}
        \subcaption*{$v = 16$, $e = 11$}
    \end{subfigure}
    \begin{subfigure}{0.49\textwidth}
        \centering
        \begin{tikzpicture}
            \def\er{0.08}

            \draw[fill=gray!20] (45:1) -- (45:1.25) -- (45:1.5) -- cycle;
            \draw[fill=gray!20] (45:1.5) -- (45:1.75) -- (45:2) -- cycle;

            \foreach \rotation in {0, 90, 180, 270}{
                \draw[rotate=\rotation, fill=gray!20] (45:1) -- (-45:1) -- (0:1.1) -- cycle;
                \draw[rotate=\rotation, fill=gray!20] (0:1.1) -- (45:2) -- (-45:2) -- cycle;
                \draw[rotate=\rotation, fill=blue!20] (0:1.1) circle (\er);
            }
            \foreach \rotation in {0, 180}{
                    \draw[rotate=\rotation, fill=red!20] (45:1) circle (\er);
                    \draw[rotate=\rotation, fill=red!20] (-45:2) circle (\er);
                    \draw[rotate=\rotation, fill=green!20] (-45:1) circle (\er);
                    \draw[rotate=\rotation, fill=green!20] (45:2) circle (\er);
            }

            \draw[fill=green!20] (45:1.25) circle (\er);
            \draw[fill=blue!20] (45:1.5) circle (\er);
            \draw[fill=red!20] (45:1.75) circle (\er);

        \end{tikzpicture}
        \subcaption*{$v = 15$, $e = 10$}
    \end{subfigure}
    \par\bigskip
    \begin{subfigure}{0.49\textwidth}
        \centering
        \begin{tikzpicture}
            \def\er{0.08}
            \def\triples{
                0/0/0/0.5/0/1,
                0/1.5/0/2/0/1,
                0/0/2/0/1/1,
                2/0.5/2/0/2/1,
                2/2/0/2/1/1,
                2/2/2/1.5/2/1
            }
            \foreach \rx/\ry/\gx/\gy/\bx/\by in \triples
                \draw[fill=gray!20] (\rx, \ry) -- (\gx, \gy) -- (\bx, \by) -- cycle;
            \foreach \rx/\ry/\gx/\gy/\bx/\by in \triples {
                \draw[fill=red!20] (\rx, \ry) circle (\er);
                \draw[fill=green!20] (\gx, \gy) circle (\er);
                \draw[fill=blue!20] (\bx, \by) circle (\er);
            }
        \end{tikzpicture}
        \subcaption*{$v=11$, $e=6$}
    \end{subfigure}
    \begin{subfigure}{0.49\textwidth}
        \centering
        \begin{tikzpicture}
            \def\er{0.08}
            \def\triples{
                0/0/1/0/2/0,
                0/0/0/1/0/2,
                0/0/0.5/0.5/1/1,
                2/2/1/2/0/2,
                2/2/2/1/2/0,
                2/2/1.5/1.5/1/1
            }
            \foreach \rx/\ry/\gx/\gy/\bx/\by in \triples
                \draw[fill=gray!20] (\rx, \ry) -- (\gx, \gy) -- (\bx, \by) -- cycle;
            \foreach \rx/\ry/\gx/\gy/\bx/\by in \triples {
                \draw[fill=red!20] (\rx, \ry) circle (\er);
                \draw[fill=green!20] (\gx, \gy) circle (\er);
                \draw[fill=blue!20] (\bx, \by) circle (\er);
            }
        \end{tikzpicture}
        \subcaption*{$v=11$, $e=6$}
    \end{subfigure}
    \caption{Degenerations of $\sH_1$ with $v_i - e_i = 5$ and $e_i < 12$.
    Some triangles are shown flat.}%
    \label{fig:degenerations}
\end{figure}

\bibliography{refs}
\bibliographystyle{alpha}

\end{document}